\documentclass[11pt]{amsart}

\usepackage{amssymb,amsfonts,amsthm,amsmath,mathrsfs,xspace,hyperref}
\usepackage{changes}
\definechangesauthor[name={Nico}, color=blue]{NM}
\usepackage{todonotes}
\usepackage{combelow}
\usepackage{graphicx}
\usepackage[french,english]{babel}
\usepackage{inputenc}
\usepackage[T1]{fontenc}
\usepackage[top=3cm, bottom=2cm, centering]{geometry}
\usepackage{csquotes}
\usepackage{color}
\usepackage{enumerate}
\usepackage{enumitem}

  \newcommand{\R}{\ensuremath{\mathbb{R}}}%
  \newcommand{\Z}{\ensuremath{\mathbb{Z}}}%
  \newcommand{\N}{\ensuremath{\mathbb{N}}}%

  \newcommand{\bim}{\mathfrak{B}}
  \newcommand{\arr}{\longrightarrow}
  \newcommand{\lims}[1][G]{\mathcal{J}_{#1}}
  \newcommand{\limg}[1][G]{\mathcal{X}_{#1}}
  \newcommand{\til}{\mathcal{T}}
  \newcommand{\xs}{\alb^\ast}
  \newcommand{\xo}{\alb^\omega}
  \newcommand{\leaf}{\mathcal{L}}
  \newcommand{\leaftilde}{\widetilde{\leaf}}
  \newcommand{\xmo}{\alb^{-\omega}}

        \newcommand{\M}{\ensuremath{\mathcal{M}}}%

  \newcommand{\Sym}{\mathop{\mathrm{Sym}}\nolimits}
  \newcommand{\Alt}{\mathop{\mathrm{Alt}}\nolimits}
    \newcommand{\acts}{\ensuremath{\curvearrowright}}%
  \newcommand{\sub}{\ensuremath{\operatorname{Sub}}}%
  \newcommand{\stab}{\ensuremath{\operatorname{Stab}}}%

    \newcommand{\aut}{\ensuremath{\operatorname{Aut}}}%
    \newcommand{\rist}{\ensuremath{\operatorname{RiSt}}}%
        \newcommand{\alb}{\ensuremath{\mathsf{X}}}%
        \newcommand{\nuke}{\ensuremath{\mathcal{N}}}%
        \newcommand{\Gammatilde}{\ensuremath{\widetilde{\Gamma}}}%

    \newcommand{\esf}{\ensuremath{\mathsf{e}}}

\theoremstyle{definition}
  \newtheorem{defin}{Definition}[section]

\theoremstyle{plain}
  \newtheorem{thm}[defin]{Theorem}
    \newtheorem*{thm*}{Theorem}

  \newtheorem{main thm}{Theorem}

  \newtheorem{prop}[defin]{Proposition}
    \newtheorem{prop-def}[defin]{Proposition-Definition}

  \newtheorem{cor}[defin]{Corollary}{}
    \newtheorem*{cor*}{Corollary}{}

  \newtheorem{lemma}[defin]{Lemma}

\theoremstyle{remark}
  \newtheorem{remark}[defin]{Remark}

\theoremstyle{plain}
    \newtheorem{introthm}{Theorem}

\newtheorem{introcor}[introthm]{Corollary}

\date{\today}	
\title{Commensurating actions and self-similar groups}

\author{Nicol\'as Matte Bon}
	\author{Volodymyr Nekrashevych}
	\author{Tianyi Zheng}

\thanks{NMB is partially supported by ANR project PLAGE ANR-24-CE40-3137. VN was supported by NSF grant DMS2204379. TZ is partially supported by NSF DMS 2348143.}
\begin{document}
\begin{abstract} Commensurating actions govern how a group can act on non-positively curved cube complexes. We obtain a complete picture of them for a class of finitely generated groups acting on rooted trees: contracting self-similar branch groups. The main application is a proof of Property FW for the iterated monodromy group of the subdivision rule generating the classical square Sierpi\'nski carpet.  This is the first example of an infinite finitely generated amenable group with Property FW, answering a question of Cornulier. As another application, we show that a contracting self-similar regular branch group does not have Property PW. In particular, the Grigorchuk group does not have Property PW, answering another question in the literature.

\end{abstract}

\maketitle

\section{Introduction}

\subsection*{Background and main application} 
A group $G$ \emph{commensurates} a subset $A$ of a $G$-set $X$ if $gA\triangle A$ is finite for all $g\in G$, and \emph{transfixes} $A$ if $A$ has finite symmetric difference with a $G$-invariant subset. For $G$ finitely generated, these notions admit a geometric interpretation in terms of ends of Schreier graphs: a $G$-set $X$ has a commensurated subset which is not transfixed if and only if some orbit in $X$ has a Schreier graph with more than one end (with respect to any fixed finite generating set). Commensurating actions govern how a group can act on $\mathrm{CAT}(0)$ cube complexes: a cellular action on such a complex determines a commensurating action (on the set of half-spaces of the complex), and conversely every commensurating action arises in this way, by a construction of Sageev \cite{Sageev}. Cornulier \cite{Cor-FM, Cor-FWsurvey} popularized the point of view of commensurating actions, recognizing in them a unifying framework for earlier work on cube complexes, median graphs, wall spaces (in the sense of \cite{Hag-Pau}), and related affine isometric actions on Hilbert spaces.

The group $G$ has \emph{Property FW} if every commensurated subset in every $G$-set is transfixed. Equivalently, every cellular action of $G$ on a $\mathrm{CAT}(0)$ cube complex has a fixed point. See \cite{Cor-FWsurvey} for more equivalent characterizations.

Cornulier asked whether there exist infinite finitely generated amenable groups with Property FW  \cite[Question 1.19(2)]{Cor-FM} (see also \cite[Question 1.11]{Cor-Piecewise}). 

In \cite{MNZ}, we established the amenability---via the Liouville property for random walk---of a class of \emph{contracting self-similar groups}: namely, all those whose \emph{limit space} has conformal dimension less than 2.  Among these, there is the \emph{Sierpi\'nski carpet group}---a finitely generated group encoding the subdivision rule generating the classical square Sierpi\'nski carpet (see \cite[\S6.3]{MNZ} and \S\ref{s-sierpinski} in this paper). Combining that result with the main theorems of the present paper, we obtain the following (see Theorem \ref{t-sierpinski}).

\begin{thm*}
The Sierpi\'nski carpet group is amenable and has Property FW.
\end{thm*}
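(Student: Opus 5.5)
Amenability will not be reproved here. The Sierpiński carpet group $G$ is a contracting self-similar group whose limit space is (homeomorphic to) the square Sierpiński carpet. Its conformal dimension is $\log 8/\log 3<2$, so the Liouville property, and hence amenability, follows from the main result of \cite{MNZ}. The real content is Property FW. For that I would use the criterion recalled above: for a finitely generated group, FW is equivalent to the statement that for every transitive $G$-set $G/H$ the Schreier graph has at most one end. Equivalently, every commensurated subset of $G/H$ is finite or cofinite.

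The plan is to first classify the commensurating actions of a contracting self-similar regular branch group. I expect the main theorems of the paper to show the following. If $H\le G$ is a subgroup such that $G/H$ has a multi-ended Schreier graph, then, up to finite-index modifications, $H$ is controlled by the action on the boundary of the tree. Concretely, a non-transfixed commensurated subset of $G/H$ should come from a neighbourhood of a point, or of a finite union of points, in the limit space or the boundary $\xo$. The multiple ends are then witnessed by a local separation phenomenon: removing a point locally disconnects a suitable orbital graph, or the tile $\til$.

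The key steps, in order, are the following.
\begin{enumerate}
\item Verify that $G$ satisfies the hypotheses of the general theorems: it is finitely generated, contracting, self-similar and regular branch. This should come from an explicit wreath recursion, by exhibiting a finite-index subgroup $K$ with $K^{\alb}\le K$ under the recursion.
\item Apply the classification to reduce FW to a statement about the geometry of the limit space or the tiles. The statement should be that no point of the limit space locally separates it, or more precisely that the orbital Schreier graphs of points of $\xo$ and their finite-index variants are one-ended.
\item Check this geometric condition for the Sierpiński carpet. Here I would use that the carpet has no local cut points: every connected open subset stays connected after removing finitely many points. The reason is that each small square piece is attached to its neighbours along whole segments, not at single points.
\end{enumerate}

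The main obstacle is step 3, transferred to the discrete setting. One must show that the orbital graphs of $G$ acting on the boundary have one end. This should include orbits of "boundary" points, meaning points lying on the edges of the squares, where the graph might a priori have two or four ends. I would first use the self-similarity to identify the orbital graphs with increasing unions of the level-$n$ carpet approximations. Then, for a finite set $F$ of vertices, I would show that the complement of $F$ has a single infinite component. The argument is that the $n$-th approximations are $3^n\times3^n$ grids with holes, in which any two far-apart vertices can be joined avoiding a bounded set. Points on the outer edges of the square need separate care: their orbital graph is a half-carpet or quarter-carpet, which is still one-ended. Finally, the classification theorem must also handle the finitely many "singular" orbits where the germs are non-trivial. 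For these I would argue the same way using the finite cover obtained by splitting along germs, and check that this cover remains one-ended.
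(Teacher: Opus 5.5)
There is a genuine gap in your step~2. The classification you invoke, Theorem~\ref{t-multi-ended-graphs}, only concerns subgroups $H$ for which the action $G\acts G/H$ is \emph{faithful}. For a branch group, a non-faithful transitive action factors through a proper quotient. By Proposition~\ref{p-Grigorchuk} such a quotient is virtually abelian, but nothing forces it to be finite. If some proper quotient is infinite, it virtually surjects onto $\Z$ and produces a two-ended Schreier graph of $G$. Then FW fails no matter how well-behaved the graphs of germs and the limit space are.

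This is why the criterion (Corollary~\ref{c-FW}, Corollary~\ref{c-intro-FW}) has two independent conditions: one-ended graphs of germs (equivalently, no local cut point in $\limg$) \emph{and} just-infiniteness. Your plan never addresses the second one. Being finitely generated, contracting and (regular) branch only tells you that proper quotients are virtually abelian, and your step~3 is purely about the action on $\xo$.

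The paper closes this gap with an explicit computation in Proposition~\ref{pr:sierpinskibranch}:
\begin{itemize}
\item From $(ab)^2=(467)(1,1,(ab)^2,1,1,1,1,1)$ and $(da)^2=(142)(1,\ldots,1,(da)^2)$, the commutator subgroup of $\langle (ab)^2,(da)^2\rangle$ is the rooted group $\Alt(\{1,2,4,6,7\})$.
\item Similar pairs of dihedral subgroups of the nucleus give $\Alt(\alb)\le G$.
\item Conjugating $(12)(67)a=(1,1,a,1,a,1,1,a)$ by suitable $3$-cycles and multiplying yields $(a,1,\ldots,1)\in G$, and likewise for $b,c,d$. Hence $\rist(1)=G^{\alb}$ and $\rist(v)\cong G$ for every $v$.
\item The abelianization is $G/G'\cong(\Z/2\Z)^4$. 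This comes from the homomorphism $P$, which is additive over sections and therefore descends from $H$ to $G$.
\end{itemize}
Combining the last two points, Grigorchuk's criterion gives just-infiniteness. The same computation also gives the branch structure (regular branch over $G$ itself) and self-replication, which your step~1 needs anyway.

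On step~3, checking one-endedness of the graphs of germs directly on the carpet approximations is a legitimate alternative in principle. However, the points with non-trivial germs do not form finitely many orbits here. The generator $a$ fixes the letters $3,5,8$ with section $a$, so $a$ has a non-trivial germ at every point of the Cantor set $\{3,5,8\}^\omega$ (a side of the carpet). Also, $\langle a,b\rangle$, of order $12$, fixes $333\ldots$ and all its non-trivial elements have non-trivial germs there. Your argument would therefore have to be uniform over all $\xi\in\xo$, including such corner points.

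The paper avoids this case analysis. By Theorem~\ref{th:endsandcutpoints} it suffices that $\limg$ has no local cut point. Theorem~\ref{th:uniquenessoflimg} then identifies $\limg$ with an explicit space of carpets glued along the Cayley graph, in which every point visibly has a carpet neighbourhood.

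A minor point on amenability: $\log 8/\log 3$ is the Hausdorff dimension of the carpet, which only bounds the conformal dimension from above. That upper bound is all you need to apply \cite{MNZ}.
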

This gives an affirmative answer to Cornulier’s question, filling in a missing piece in the picture of how commensurating actions relate to analytic group properties. Property FW can be viewed as a combinatorial weakening of Kazhdan's Property (T) \cite{NR, CV}. On the other hand, the opposite \emph{Property PW}---admitting a proper commensurating action (or equivalently, a proper action on a CAT(0) cube complex)---implies the Haagerup property \cite[Cor. 7.4.2]{CCJJA}. These implications have been widely used, for example, to disprove Property (T), or to establish the Haagerup property for many groups. They become equivalences if Property PW and FW are replaced by measured analogues \cite{RobertsonSteger, CDH}. The question whether every Haagerup group has Property PW had been circulating since the late 90s (see \cite{CN, CV} and \cite[\S2.C]{Cor-FWsurvey}); Haglund settled it in the negative in 2007\footnote{Although the paper \cite{Haglund} was published in 2023, the first preprint version is from 2007.} by showing that any group with Property PW must have all its infinite cyclic subgroups undistorted \cite{Haglund} (and many groups with the Haagerup property do not). Examples of groups that have Property FW but not (T), whose existence was also an open question (see \cite{Bar-Cha} and the second arXiv version of \cite{CDH}), were first exhibited by Cornulier \cite{Cor-FM}. Amenability is much more restrictive than the Haagerup property and is in strong opposition with Property (T), so the existence of amenable groups that have FW is quite a conclusive negative answer to the general question:

\begin{quote}
\emph{Can commensurating actions be characterized analytically?} 
\end{quote}

It was  natural to expect that infinite amenable groups with Property FW exist---Cornulier explicitly conjectures so just after \cite[Question 1.11]{Cor-Piecewise}. However, there are concrete obstructions to constructing such examples. An infinite finitely generated \emph{elementary amenable} group cannot have Property FW, as it admits a virtual surjection onto $\Z$; this forces one to look among the known non-elementary amenable groups---typically groups of dynamical origin. Amenability of essentially all known such examples can be established using Juschenko--Monod's \emph{extensive amenability} method \cite{Ju-Mo}. As it turns out, this method \emph{relies on commensurating actions}: amenability of a group $G$ is deduced from the existence of a $G$-set $X$ and a commensurated subset $A\subset X$ with amenable set-wise stabilizer, combined with the condition that the Schreier graphs of the action are recurrent \cite{JNS}. Subsequent criteria based on this approach \cite{JNS, JMMS} also implicitly depend on commensurating actions, as shown in \cite{LMT, Cor-Piecewise}. Any group covered by these criteria cannot have Property FW, thus ruling out essentially all known amenable groups (at least before \cite{MNZ}). Another difficulty is that methods for establishing Property FW beyond Property (T) are scarce (see \cite{Cor-FM, Cor-FWsurvey} and \cite[p.\,2 and \S4]{Genevois}). The only known criterion that does not immediately force non-amenability---derived from Haglund's work \cite{Haglund}, see \cite[Ex.\,6.A.8]{Cor-FM}---yields no known candidate amenable examples, even among groups whose amenability is open.

The main result in   \cite{MNZ} establishes amenability without \emph{a priori} relying on the existence of a commensurating action, thus producing new viable candidates to answer Cornulier's question. This motivated us to seek a precise understanding of commensurating actions of self-similar groups, and a tractable criterion for Property FW in this context. 
\subsection*{Main results} Our main results, Theorems~\ref{t-intro-germs} and~\ref{t-intro-cutpoints}, together provide a complete understanding of commensurating actions for a well-studied class of groups $G \leq \aut(T)$ acting on a rooted tree $T$, namely {contracting self-similar branch groups}. We refer to \S\ref{s-preliminaries-trees} and \S\ref{s-preliminaries-contracting} for the main definitions on branch groups and self-similar groups.

In many classical examples, multi-ended Schreier graphs appear as orbital graphs of the action on the boundary $\partial T$ of the tree, thereby precluding Property FW. This phenomenon occurs for the Grigorchuk group \cite{Grigorchuk:Schreier}, the Basilica group \cite{Basilica:Schreier}, and more generally for all \emph{bounded automata} groups. Another source of multi-ended Schreier graphs is given by \emph{graphs of germs}: for a boundary point $\xi \in \partial T$, the \emph{graph of germs} $\widetilde{\Gamma}_\xi$ is the Schreier graph associated to the coset space $G/G^0_\xi$, where $G^0_\xi \trianglelefteq G_\xi$ is the \emph{germ stabilizer}---the normal subgroup of the stabilizer $G_\xi$ consisting of elements fixing point-wise some neighborhood of $\xi$. The graph $\widetilde{\Gamma}_\xi$ is a Galois cover of the orbital graph $\Gamma_\xi$, with deck group equal to the quotient $G_\xi/G^0_\xi$, called the \emph{group of germs} at $\xi$.  In the case we are mostly interested about---contracting self-similar groups---the groups of germs of the action $G\acts \partial T$ are all {finite}.  Then each graph of germs $\Gammatilde_\xi$ is a finite cover of the orbital graph $\Gamma_\xi$ and has at least as many ends, but can have strictly more: for example the Grigorchuk group has a one-ended orbital graph $\Gamma_\xi$, covered by a four-ended graph of germs $\Gammatilde_\xi$ (see Vorobets~\cite{Vorobets}).

Our first main result asserts that, under natural assumptions, graphs of germs are essentially the only multi-ended Schreier graphs---up to finite covers. We refer the reader to \S\ref{s-preliminaries-trees} for the definition of a branch group. Recall that any proper quotient of a branch group $G$ is virtually abelian \cite{Gri-branch}, and that $G$ is \emph{just-infinite} if its proper quotients are all finite.

\begin{introthm} \label{t-intro-germs}
Let $G\le \aut(T)$ be a finitely generated branch group whose action on $\partial T$ has finite groups of germs. Let $H\le G$ be a subgroup such that the Schreier graph $\Gamma_{G/H}$ has more than one end. Then:
\begin{itemize}
    \item either the action of $G$ on $G/H$ is not faithful (and thus factors through a virtually abelian quotient); 
\item or $H\le G_\xi$ is a finite-index subgroup of the stabilizer of a point $\xi\in \partial T$. In this case, the number of ends of $\Gamma_{G/H}$ does not exceed that of the graph of germs $\Gammatilde_\xi$. 

\end{itemize} 
In particular, $G$ has Property FW if and only if it is just-infinite and all graphs of germs $\Gammatilde_
\xi, 
\xi\in \partial T$ are one-ended.
\end{introthm}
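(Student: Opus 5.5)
The plan is to take a subgroup $H$ with $\Gamma_{G/H}$ multi-ended and acting faithfully, and to locate a boundary point $\xi$ with $H$ commensurable with a subgroup of $G_\xi$. Multi-endedness gives a commensurated subset $A\subset G/H$ that is not transfixed: take $A$ to be the vertex set of an infinite component left over after removing a finite set of vertices. For each vertex $v$ of $T$, consider the rigid stabilizer $\rist_G(v)$. Because $G$ is branch, $\prod_{|v|=n}\rist_G(v)$ has finite index in $G$, so it acts on $G/H$ with finitely many orbits, and each of its orbits is again multi-ended or finite. The key dichotomy, on which everything rests, is the following. If two commuting subgroups $K_1,K_2$ both act on $G/H$ with infinite orbits that are not transfixed in a compatible way, one obtains a contradiction. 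This follows the standard argument (as for products, cf.\ Cornulier) that a commensurating action of $K_1\times K_2$ is transfixed by one factor or else essentially decomposes. Concretely, using $gA\triangle A$ finite for $g$ in a finite generating set, one shows that for all but at most one vertex $v$ of level $n$, the subgroup $\rist_G(v)$ transfixes $A$. It even acts with only finitely many moved points relative to $A$.

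Iterating over levels yields a descending path $v_1v_2\dots$, that is, a boundary point $\xi$. At each level, the rigid stabilizers of vertices off the path transfix $A$. Faithfulness, together with the fact that nontrivial normal subgroups of branch groups contain derived subgroups of rigid level stabilizers, is used to rule out the degenerate case where every $\rist_G(v)$ transfixes $A$. In that case the finite-index subgroup $\prod\rist_G(v)$ would transfix $A$, hence $G$ would transfix it up to finite index, contradicting non-transfixion.

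Next, show that $H$ virtually fixes $\xi$. Pick a point $x_0=H\in A$ far from the finite boundary of $A$. The elements supported away from the path, namely the subgroups $\rist_G(w)$ for $w$ branching off the path, move $x_0$ only within a finite set, so their orbits through $x_0$ are finite. Consequently $H$ contains finite-index subgroups of each such $\rist_G(w)$. Conjugating, $H$ must preserve the end of $T$ singled out by this property up to finite ambiguity, so $H\cap G_\xi$ has finite index in $H$. Using multi-endedness once more rules out $H$ fixing no point, and we replace $\xi$ accordingly. Finite groups of germs then give $G^0_\xi\cap H$ of finite index in $G^0_\xi$: elements trivial near $\xi$ lie in products of $\rist_G(w)$ for $w$ off the path. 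Hence $\Gamma_{G/H}$ is covered, up to finite index, by $\Gammatilde_\xi=\Gamma_{G/G^0_\xi}$. The number of ends cannot increase under passing to a quotient graph by a finite-index commensurable subgroup, which gives the bound on ends.

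For the final equivalence, first suppose $G$ is just-infinite and all $\Gammatilde_\xi$ are one-ended. A non-faithful action would factor through a finite quotient, and the Schreier graph of a finite quotient is finite, so it has zero ends. Otherwise the bound shows every Schreier graph has at most one end, and FW follows by the Schreier-graph characterization recalled in the introduction. Conversely, suppose $G$ has FW. An infinite virtually abelian quotient surjects virtually onto $\Z$, giving a two-ended graph and contradicting FW. Similarly, a multi-ended $\Gammatilde_\xi$ contradicts FW directly. The main obstacle will be the dichotomy step: the careful product argument showing that at most one branch at each level fails to transfix $A$, while keeping uniform control as the level grows.
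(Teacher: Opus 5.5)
\textbf{There is a genuine gap in the second half, which is the heart of the theorem.} You assert that $H\cap G^0_\xi$ has finite index in $G^0_\xi$ because $H$ contains a finite-index subgroup $K_w$ of each $\rist(w)$ with $w$ branching off the ray. This does not follow. $G^0_\xi=\bigcup_n G^0_{v_n}$ is an increasing union. The $K_w$ with $|w|\le n$ do generate a finite-index subgroup of $G^0_{v_n}$, but these indices may grow with $n$. Indeed $H=\bigoplus_w K_w$, with infinitely many $K_w$ proper, has every property you have extracted at that point, yet has infinite index in $G_\xi$. Finiteness of the germ groups only gives $[H:H\cap G^0_\xi]<\infty$.

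Even granting commensurability, your count of ends runs the wrong way. For $K\le H'$ of finite index one only has $\esf(\Gamma_{G/K})\ge\esf(\Gamma_{G/H'})$, and the inequality can be strict: for the Grigorchuk group a one-ended $\Gamma_\xi$ is finitely covered by a four-ended $\Gammatilde_\xi$. So commensurability of $H$ with $G^0_\xi$ gives no upper bound on $\esf(\Gamma_{G/H})$ unless $G^0_\xi\le H$.

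The missing idea is to show that, after replacing $H$ by $H\cap G^0_\xi$, the covering $p\colon\Gamma_{G/H}\to\Gammatilde_\xi$ creates no new ends (Lemma~\ref{l-ends-injection}). The argument runs as follows.
\begin{itemize}
\item Suppose infinite components $U_1,U_2$ of $\Gamma_{G/H}\setminus\Sigma$ have projections containing a common infinite component $Q$ of $\Gammatilde_\xi\setminus p(\Sigma)$.
\item Pick $xH\in U_1$ and $xkH\in U_2$ over the same point of $Q$, with $k\in G^0_w$ for a prefix $w$ of $\xi$.
\item Lift a path in $Q$ ending at some $zG^0_\xi$ with $z\in G_w$ far from $p(\Sigma)$. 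This gives $zH\in U_1$ and $zkH\in U_2$.
\item Take a finite-index subgroup $R_0$ of $\bigoplus K_v\le H$ (over $v$ off the ray with $|v|\le|w|$) that is characteristic in the finitely generated group $G^0_w$, hence normal in $G_w$. It provides $t$ of bounded length with $tzkH=zH$, so $U_1=U_2$.
\end{itemize}
This gives the end inequality. Lifting a path through all infinite components of $\Gammatilde_\xi\setminus p(\Sigma)$ then also gives $[G_\xi:H]<\infty$, which uses multi-endedness a second time.

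\textbf{The first step is also misstated.} It is false that all but at most one $\rist(v)$ at level $n$ transfix $A$. By Lemma~\ref{l-reduction-transitive}, every $\rist(n)$-orbit meets $A$ and $A^c$ in infinite sets. The vertex whose rigid stabilizer acts with infinite image depends $G$-equivariantly on the orbit, so by level-transitivity every vertex of level $n$ occurs. On its orbit, that rigid stabilizer does not transfix $A$.

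The statement has to be made for the $\rist(n)$-orbit $Y_n$ of the base point:
\begin{itemize}
\item a unique $v_n$ has $\rist(v_n)$ acting on $Y_n$ with infinite image;
\item the other rigid stabilizers act on $Y_n$ with finite image, which is what puts finite-index subgroups of them into $H$;
\item equivariance of $x\mapsto\xi_x$ then gives $H\le G_\xi$ exactly, with no ``finite ambiguity''.
\end{itemize}

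Two commuting subgroups acting with infinite image give no contradiction by themselves. For example, $\Z\times\Z$ acting on $\Z$ by $(m,n)\cdot k=k+m+n$ is transitive and two-ended. The product theorem (Theorem~\ref{t-commensurating-product-discrete}) only yields virtually abelian image; its proof needs a separate argument excluding infinitely many ends, plus Freudenthal--Hopf. Faithfulness is needed exactly to exclude the virtually abelian case, via Lemma~\ref{l-quotient-rist}: some $\rist(m)'$, normal in $G$, would act trivially on $Y_n$ and hence on all of $G/H$. Your ``degenerate case'' needs no faithfulness.
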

Our next result provides a conceptual explanation for the appearance of multi-ended graphs of germs in the case of \emph{contracting self-similar groups} (see \S\ref{s-preliminaries-contracting} for the definition). Such groups arise naturally as \emph{iterated monodromy groups} of locally expanding self-coverings of compact metric spaces (and more generally, of contracting virtual endomorphisms of orbispaces). Conversely, every contracting self-similar group $G$ has an associated \emph{limit space} $\lims$, a compact metrizable space equipped with an expanding branched covering $\mathsf{s} \colon \lims \to \lims$~\cite{Nek:book}. The space $\lims$ can be canonically uniformized as a quotient $\lims = \limg / G$, where $\limg$ is a locally compact space with a proper and co-compact right $G$-action, called the \emph{limit $G$-space}. This endows $\lims$ with the additional structure of an \emph{orbispace}. The branched covering $\mathsf{s}$ is uniformized as a virtual endomorphism of the orbispace $\lims$ (\emph{partial self-covering} in~\cite{Nek:book}). Then $G$ is the iterated monodromy group of $\mathsf{s}$. See more in~\cite{Nek:book} and~\cite[Chapter 4]{nek:dyngroups}. If $G$ is \emph{self-replicating}---a natural assumption, see \S\ref{s-preliminaries-contracting}---the spaces $\limg$ and $\lims$ are connected and locally path-connected.

There is a duality between the \emph{large-scale} geometry of the orbital graphs (respectively the graphs of germs) arising from the action of $G$ on the tree boundary, and the \emph{local} topology of the space $\lims$ (respectively $\limg$). The next theorem is a manifestation of this principle:

\begin{introthm} \label{t-intro-cutpoints}
Let $G \leq \aut(T)$ be a finitely generated, faithful contracting self-replicating group, with limit $G$-space $\limg$. Then the following quantities are finite and equal.
\begin{itemize}
    \item The supremum of the number of ends of the graph of germs $\Gammatilde_\xi$ for $\xi\in \partial T$.

    \item The supremum of the number of connected components of $U\setminus \{\zeta\}$ for $\zeta\in \limg$ and $U$ a connected neighbourhood of $\zeta$.
\end{itemize}
\end{introthm}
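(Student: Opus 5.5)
The plan is to realize both quantities through one combinatorial model: the tiles of the limit $G$-space and the finite-level approximations of the graph of germs.

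\textbf{Step 1: the tile model of $\limg$.} Recall that $\limg$ is the quotient of the space of sequences $\ldots x_2x_1\cdot g$ (with $x_i\in\alb$, $g\in G$) by asymptotic equivalence. For a finite word $v$ and $g\in G$, let $\til_{v\cdot g}$ be the image of $\xmo v\cdot g$. Because $G$ is contracting with finite nucleus $\nuke$, these tiles have the following properties.
\begin{itemize}
\item The level-$n$ tiles form a locally finite cover of $\limg$, and their diameters shrink uniformly as $n\to\infty$.
\item Two tiles $\til_{v\cdot g}$ and $\til_{u\cdot h}$ intersect exactly when the corresponding vertices are related through nucleus elements, that is, when $gh^{-1}$ acts as an element of $\nuke$ along the relevant sections.
\item By self-replication, each tile is connected and the union of adjacent tiles is connected (this gives local connectedness).
\end{itemize}
So the nerve of the level-$n$ tiles around a point $\zeta$ is a finite graph, and it is built from the same data as a ball in the graph of germs.

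\textbf{Step 2: the duality dictionary.} Fix $\zeta\in\limg$, represented by $\ldots x_2x_1\cdot g$, and let $\xi=x_1x_2\ldots$ (with the appropriate reversal convention). The aim is to show that for $n\ge m$ the following two graphs are canonically isomorphic, up to a uniformly bounded error coming from $\nuke$:
\begin{itemize}
\item the annulus of tiles of levels between $m$ and $n$ around $\zeta$;
\item the ball of radius about $|\nuke|^{n}$ minus the ball of radius about $|\nuke|^m$ in $\Gammatilde_\xi$.
\end{itemize}
The idea is that passing to deeper tiles corresponds, via the self-similarity $g\mapsto g|_{v}$, to zooming out in the Schreier graph. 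Contraction makes the correspondence between word length and level quasi-exponential. Germs enter because points of $\limg$ distinguish group elements, not just points of $\partial T$: two group elements with the same image of $\xi$ but different germs give different tiles containing $\zeta$. This is exactly why $\limg$ is matched with $\Gammatilde_\xi$ rather than with $\Gamma_\xi$.

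\textbf{Step 3: ends versus local components.} Under the dictionary, the infinite components of $\Gammatilde_\xi$ outside a large ball correspond to the components of $U\setminus\{\zeta\}$ that reach $\partial U$, where $U$ ranges over tile neighbourhoods. Connectedness of tiles lets us pass from tile-chains to honest paths and back. This should give:
\begin{itemize}
\item the number of ends of $\Gammatilde_\xi$ equals $\lim_{U}$ of the number of components of $U\setminus\{\zeta\}$ meeting the boundary of a smaller neighbourhood;
\item more generally, counts of components for arbitrary connected $U$ are bounded by the ends of $\Gammatilde_{\xi'}$ for the relevant $\xi'$.
\end{itemize}
For arbitrary connected neighbourhoods $U$, we sandwich $U$ between tile neighbourhoods of adjacent levels. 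Components of $U\setminus\{\zeta\}$ are open, and local connectedness lets each of them reach arbitrarily close to $\zeta$ or be controlled. One caveat: components not accumulating at $\zeta$ must be excluded or bounded. Such a component can be pushed to accumulate at $\zeta$ only after comparing with a cut-point count, so I would restrict to those components accumulating at $\zeta$ and show the rest contribute nothing beyond what is realized at another point.

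\textbf{Step 4: finiteness and equality of suprema.} Finiteness follows from Step 2: ends are bounded by the number of tiles containing $\zeta$, which is at most $|\nuke|$ times a constant. Both suprema are therefore bounded by the same combinatorial invariant, and the dictionary holds in both directions, so they are equal.

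\textbf{Main obstacle.} I expect the hardest step to be Step 2: making the isomorphism between tile annuli and Schreier annuli precise, with uniform constants, while correctly handling the germ quotient at $\zeta$ when $\zeta$ has nontrivial stabilizer. I would first try to establish it via the nucleus-labelled "model graphs" $\nuke$-adjacency on $\alb^n\times\nuke$, and check that it respects germs.
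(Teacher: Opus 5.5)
There is a genuine gap in Step 2, and Steps 3 and 4 depend on it. Identifying $\zeta=\ldots x_2x_1\cdot g\in\limg$ with $\xi=x_1x_2\ldots\in\xo$ does not match how neighbourhoods of $\zeta$ are built. The level-$n$ neighbourhood $\mathcal{U}_n(\zeta)$ is a union of tiles $\til\otimes s\cdot x_nx_{n-1}\ldots x_1\cdot g$. Its combinatorics is therefore a ball in $\Xi_n$ around the vertex $x_nx_{n-1}\ldots x_1$, whose \emph{first} letter is $x_n$. These vertices do not lie on a common ray, and the rooted graphs $(\Xi_n,x_n\ldots x_1)$ need not converge. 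So no single graph of germs carries the annuli around $\zeta$.

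In fact the pointwise equality you assert in Step 3 is false. Take the Grigorchuk group, with $b=(a,c)$, $c=(a,d)$, $d=(1,b)$.
\begin{itemize}
\item The tile $\til$ is an arc. Its endpoints are $q=\ldots111\cdot1$, fixed by $b,c,d$, and $p=\ldots1110\cdot1$, fixed by $a$ via the nucleus chain $a,b,d,c,b,d,\ldots$.
\item $\limg$ is obtained by gluing the arcs $\til\cdot g$ at these endpoints. Hence $p$ has local degree $2$.
\item However, $\xi=0111\ldots=a(1^\omega)$ has the same graph of germs as $1^\omega$, which is four-ended.
\end{itemize}
Replacing $\xi$ by the limit $1^\omega$ of the words $x_n\ldots x_1$ does not help either. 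What genuinely models the neighbourhood of $p$ is $\Gamma_{G/H}$, where $H$ is a Chabauty limit of $G^0_{x_{n_k}\ldots x_1}$. Here $G^0_{1^{n-1}0}$ contains exactly one of $b,c,d$, depending on $n \bmod 3$. So $H$ sits strictly between $G^0_{1^\omega}$ and $G_{1^\omega}$, and $\Gamma_{G/H}$ is the quotient of the four-ended graph of germs by an involution, with two ends.

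So only an equality of \emph{suprema} holds, and it needs compactness arguments that your plan does not contain. The paper proves $D_3\le D_1$ as follows:
\begin{itemize}
\item pass to a subsequence with $x_{n_k}\ldots x_1\to\eta$ and $G^0_{x_{n_k}\ldots x_1}\to H$;
\item use $G^0_\eta\le H\le G_\eta$ (Lemma \ref{l-semi-continuous}) and finiteness of germ groups to get $\esf(\Gammatilde_\eta)\ge\esf(\Gamma_{G/H})$ (Lemma \ref{lem:endsofcoverings});
\item show $\esf(\Gamma_{G/H})\ge K$ from the $K$ local components at $\zeta$.
\end{itemize}
For the converse, it proves $D_1\le D_2$ as follows:
\begin{itemize}
\item zoom out by $\sigma^{-n}$ and take limits of the shifted rays, the prefixes, and the sections $g_{j,n}|_{x_1\ldots x_n}\in\nuke^2$;
\item this produces a \emph{global} cut point with at least $K$ complementary components in a possibly different germ-leaf $\leaftilde_y$;
\item separation is checked by lifting paths back to $\Gammatilde_\xi$ with Lemma \ref{lem:extendingpaths}.
\end{itemize}
Then $D_2\le D_3$ follows by Proposition \ref{pr:localhomeomorphism}.

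Your finiteness argument also relies on ``each tile is connected'', which self-replication does not give you. What is actually used is the uniform statement that $\til$ lies in a single connected component of $\til\cdot\nuke^l$ (Lemma \ref{lem:connectedtile}). From it, the number of infinite components of $\Gammatilde_\xi\setminus[D,\xi]$ is bounded by $|\nuke^{l+1}\setminus\nuke^l|$.
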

The combination of Theorems \ref{t-intro-germs} and \ref{t-intro-cutpoints} yields:
\begin{introcor} \label{c-intro-FW}
Let $G$ be a finitely generated, contracting self-replicating branch group.  Then $G$ has Property FW if and only if it is just-infinite and the limit $G$-space $\limg$ has no local cut point.
\end{introcor}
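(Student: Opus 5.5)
The plan is to derive Corollary~\ref{c-intro-FW} directly from the last sentence of Theorem~\ref{t-intro-germs} together with Theorem~\ref{t-intro-cutpoints}.

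\textbf{Step 1: Theorem~\ref{t-intro-germs} applies.} We need that $G$ has finite groups of germs for its action on $\partial T$. This is the standard fact, recalled in the introduction, that contracting self-similar groups have finite groups of germs. The reason is that, for $\xi=x_1x_2\dots$, the germ of $g\in G_\xi$ at $\xi$ is determined by the sequence of sections $g|_{x_1\cdots x_n}$. For $n$ large these sections lie in the finite nucleus $\nuke$. So the germ is determined by a bounded amount of data, namely a section in $\nuke$ at a deep enough level together with the identification of the tail. Since $G$ is finitely generated and branch, the final clause of Theorem~\ref{t-intro-germs} then gives: $G$ has Property FW if and only if $G$ is just-infinite and every graph of germs $\Gammatilde_\xi$, $\xi\in\partial T$, is one-ended.

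\textbf{Step 2: translate via Theorem~\ref{t-intro-cutpoints}.} A contracting self-replicating branch group acting on $T$ is faithful, and we assume it is finitely generated. Theorem~\ref{t-intro-cutpoints} therefore gives
\[\sup_{\xi}\#\mathrm{Ends}(\Gammatilde_\xi)=\sup_{\zeta,U}\#\pi_0(U\setminus\{\zeta\}),\]
where the right-hand supremum is over $\zeta\in\limg$ and connected neighbourhoods $U$ of $\zeta$. Each $\Gammatilde_\xi$ is an infinite connected graph, since $G$ is infinite and the germ stabilizer has infinite index, so it has at least one end. Hence all $\Gammatilde_\xi$ are one-ended exactly when the left side equals $1$. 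Similarly, $\limg$ is connected, locally path-connected and has no isolated points, so each $U\setminus\{\zeta\}$ is nonempty and the right side is at least $1$. It equals $1$ exactly when no point $\zeta$ separates any connected neighbourhood of itself, which is the definition of ``no local cut point''.

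\textbf{Step 3: combine and check the hypotheses.} Putting the two steps together proves the corollary. The only delicate point is checking the hypotheses of the two theorems. These are finiteness of the groups of germs (Step 1), faithfulness, and that the definition of local cut point matches the component count for connected neighbourhoods rather than for small neighbourhoods. For the last point, local connectedness of $\limg$ allows us to restrict to arbitrarily small connected $U$. I expect the germ finiteness to be the main point needing justification, but it should follow directly from contraction as sketched in Step 1.
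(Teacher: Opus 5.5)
Your proposal is correct and follows the paper's own route: apply the Property FW characterization in the last sentence of Theorem~\ref{t-intro-germs} (Corollary~\ref{c-FW}), which is available because faithful contracting groups have finite groups of germs, and then use the equality of suprema in Theorem~\ref{t-intro-cutpoints} to turn ``all graphs of germs are one-ended'' into ``$\limg$ has no local cut point''. Your worry about matching definitions in Step~3 is moot, since Theorem~\ref{t-intro-cutpoints} counts components of $U\setminus\{\zeta\}$ over connected neighbourhoods $U$, exactly as in the definition of local degree. The paper additionally proves a slightly sharper form: the supremum of the number of ends of Schreier graphs equals the maximal local degree if $G$ is just-infinite, and equals the maximum of that degree and $2$ otherwise.
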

We stress that for many well-studied families of contracting groups, the space $\lims$ (and thus $\limg$) \emph{does} have local cut points. This is the case, in particular, when $G$ is generated by an automaton of polynomial activity (this is related to the fact that such groups fall in the setting of the extensive amenability criterion from~\cite{JNS}). On the other hand, the criterion of~\cite{MNZ} implies amenability for many groups for which $\limg$ is (locally) homeomorphic to the Sierpi\'nski carpet---and hence has no local cut point. Many such examples can be constructed using the theory of iterated monodromy groups; for instance, the Julia sets of many complex rational functions are homeomorphic to the Sierpi\'nski carpet.

To apply Corollary~\ref{c-intro-FW}, however, one must also verify that the resulting group $G$ is branch and just-infinite. While we lack a conceptual interpretation of these conditions in terms of limit spaces, the Sierpi\'nski carpet group $G$ (associated to the classical square realization of the Sierpi\'nski carpet) turns out to satisfy both properties, as we prove by explicit computation (see Proposition~\ref{pr:sierpinskibranch}). This fact came as a pleasant surprise.

As another application of Theorem~\ref{t-intro-germs}, we obtain an obstruction to Property PW. Recall that a group $G$ has \emph{Property PW} if it admits a \emph{proper} commensurating action, namely a commensurated subset $A \subset X$ (for some $G$-set $X$) such that the associated \emph{cardinal-definite function}
\[\nu_A(g)= |gA \triangle A|\] is proper (i.e., $\{g\in G\colon \nu_A(g)\leq n
\}$ is finite for all $n$). Equivalently, $G$ has an action on a $\mathrm{CAT}(0)$ cube complex which is metrically proper, meaning that the function $g \mapsto d(gx, x)$ is proper for some (and hence any) point $x$.

For a contracting self-similar group, cardinal-definite functions arising from graphs of germs on the tree boundary can be bounded by familiar quantities, see Proposition \ref{p-contracting-cardinal-definite}. Combined with that estimate,  Theorem \ref{t-intro-germs} (in fact the more precise Theorem \ref{t-multi-ended-graphs}) has the following corollary.  
\begin{introcor}\label{c-intro-not-PW}
Let $G$ be a finitely generated, contracting self-replicating group which is regular branch. Then $G$ does not have Property PW.
\end{introcor}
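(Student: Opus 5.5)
We argue by contradiction. Suppose $A\subset X$ is a commensurated subset of a $G$-set $X$ whose cardinal-definite function $\nu_A$ is proper. Decompose $X$ into $G$-orbits $X=\bigsqcup_i X_i$ and write $A_i=A\cap X_i$. Then $\nu_A=\sum_i \nu_{A_i}$. On an orbit where $A_i$ is transfixed, $\nu_{A_i}$ is bounded by $2|A_i\triangle B_i|$ for some invariant $B_i$. Otherwise the Schreier graph of $X_i\cong G/H_i$ has more than one end.

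By the precise version of Theorem~\ref{t-intro-germs} (Theorem~\ref{t-multi-ended-graphs}), each multi-ended orbit falls into one of two types. Either the action factors through a proper, hence virtually abelian, quotient. Or $H_i$ has finite index in some $G_\xi$, and the commensurating action on $G/H_i$ is controlled by the graph of germs $\Gammatilde_\xi$. We intend to use the quantitative form, in which $\nu_{A_i}$ is dominated (up to a bounded additive error and a multiplicative constant depending only on the number of ends) by a cardinal-definite function coming from $\Gammatilde_\xi$.

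The strategy is to find an infinite sequence $g_n\in G$ such that every contribution stays uniformly controlled. The main obstacle is that there are infinitely many orbits, so the bounds must be uniform and summable, not merely finite orbit by orbit. For this we use the regular branch structure. Let $K$ be a finite-index branching subgroup with $K\ge K\times\cdots\times K$ geometrically. For the virtually abelian quotient orbits, every such action factors through $G/N$ with $N$ a nontrivial normal subgroup, and $N\supseteq$ some rigid stabilizer product $K_n'$ (a result for branch groups). Elements chosen inside a fixed such subgroup act trivially on all of these orbits only if $N$ is uniform. To achieve this, take $g_n$ in $[K,K]$ restricted to deep subtrees; its image in the abelianization-type quotients is trivial.

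For the germ orbits, we apply Proposition~\ref{p-contracting-cardinal-definite}, which bounds the germ-graph cardinal-definite function by a quantity such as word length. Uniformity comes from contraction. Take $g_n$ to be the element acting as a fixed $h\in [K,K]$ at a vertex $v_n$ of level $n$ and trivially elsewhere. Then for every $\xi$, the germ of $g_n$ at points of $\partial T$ is trivial except on the single cylinder $v_n\partial T$. The germ-graph functions count the ends of $\Gammatilde_\xi$ that $g_n$ "moves across". By self-similarity this equals the corresponding count for $h$ at the shifted point, which Proposition~\ref{p-contracting-cardinal-definite} bounds independently of $n$.

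Summing over orbits, we need that only boundedly many orbits see a nonzero contribution from a given $g_n$, or that the total is bounded. Here we would use that $\nu_A(g_n)\le \sum_j \nu_A(s_j)$ for a decomposition of $g_n$ as a product of conjugates of $h$'s sections. Equivalently, we would show $\nu_A(g_n)\le C$ by relating $\nu_A(g_n)$ to $\nu_A$ of a conjugate of a fixed element via the branch embedding. This is the step I expect to be hardest. The $g_n$ are pairwise distinct, so $\{g:\nu_A(g)\le C\}$ is infinite, contradicting properness.
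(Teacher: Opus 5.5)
Your strategy is the paper's: restrict to multi-ended orbits, use Theorem~\ref{t-multi-ended-graphs} to pass from faithful orbits to graphs of germs, bound those with Proposition~\ref{p-contracting-cardinal-definite}, and test on elements $g_n\in(K')^{\alb^n}$ that carry a fixed $h\neq 1$ at one vertex of level $n$. Your handling of non-faithful orbits is a legitimate shortcut. There $N\supseteq\rist(m)'$, and for $n\ge m$ one has $g_n\in\rist(n)'\le\rist(m)'\le N$, so $g_n$ acts trivially on that orbit. The paper gets the same effect through Cornulier's description of cardinal-definite functions on virtually abelian groups, using virtual homomorphisms $\phi_i$ with $\rist(n)'\le\ker\phi_i$.

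There is a gap: the step you call the main obstacle (summing over infinitely many orbits) is left unproved, and neither remedy you suggest works. First, $\nu_A$ is not conjugation-invariant; one only has $\nu_A(fgf^{-1})\le\nu_A(g)+2\nu_A(f)$. The conjugators needed to push a fixed element down to level $n$ are unbounded, so controlling their $\nu_A$ is the original problem again. Second, the embedding ``$k\mapsto k$ placed at $v_n$'' coming from the branch structure is not induced by conjugation in $G$, so it gives no control of $\nu_A$.

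The missing observation is elementary and removes the obstacle entirely. Fix a finite symmetric generating set $S$. Every orbit $Y$ meeting both $A$ and $A^c$ contains some $a\in A$ with $sa\notin A$ for some $s\in S$. Hence there are at most $\sum_{s\in S}\nu_A(s)$ such orbits, and every other orbit contributes $0$. So $\nu_A=\sum_{i\le r}\nu_{A\cap Y_i}$ is a finite sum, and per-orbit bounds suffice.

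To finish from there:
\begin{enumerate}
\item On the finitely many non-faithful orbits, take $n\ge\max_i m_i$, so each contributes $0$.
\item On each faithful multi-ended orbit, apply Theorem~\ref{t-multi-ended-graphs}. Its finite-germs hypothesis holds because faithful contracting groups have finite groups of germs. This gives $\nu_{A\cap Y_i}\le\alpha_i\nu_{\widetilde A_i}+c_i$, where $\alpha_i$ is an index ratio, not a constant depending on the number of ends.
\item Apply Proposition~\ref{p-contracting-cardinal-definite} directly at level $n$. There $\mathrm{supp}_{\alb^n}(g_n)=\varnothing$ and $\sum_{v\in\alb^n}\ell_\nuke(g_n|_v)=\ell_\nuke(h)$, with $C$ depending on $\widetilde A_i$ but not on $n$. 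Hence $\nu_{\widetilde A_i}(g_n)\le C\,\ell_\nuke(h)$.
\end{enumerate}
Your ``self-similarity / count at the shifted point'' heuristic is unnecessary and not accurate as stated. With these three steps $\nu_A(g_n)$ is bounded on infinitely many distinct $g_n$, which is exactly how the paper concludes via Corollaries~\ref{c-cardinal-definite} and~\ref{c-cardinal-definite-contracting}.
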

The first Grigorchuk group satisfies the assumptions in Corollary \ref{c-intro-not-PW} and thus does not have property PW (see Corollary \ref{c-Grigorchuk-PW}). This answers a problem stated in \cite[\S2.B]{Cor-FWsurvey}.

In contrast, Schneeberger~\cite{Schneeberger} proved that the Grigorchuk group admits an isometric action on a $\mathrm{CAT}(0)$ cube complex which is proper in the topological sense---properly discontinuous, which boils down to having finite vertex stabilizers \footnote{In fact, actions on cube complexes with finite stabilizers are just called \emph{proper} in \cite{Schneeberger}. This explains why the title of \cite{Schneeberger} seems in contradiction with Corollary \ref{c-intro-not-PW}.}. The action considered in \cite{Schneeberger} is obtained by applying the Sageev construction to a two-ended orbital graph of a specific point $\xi\in \partial T$. The action itself was known before (e.g.\ \cite[\S2.B]{Cor-FWsurvey}), but the fact that it has finite stabilizers was unexpected before~\cite{Schneeberger}.  Genevois' book \cite[\S14]{Genevois-book} contains an exposition of the same action, where it is also  checked that it is not metrically proper (this was left open in~\cite{Schneeberger}).

As pointed out by Cornulier (in~\cite[\S2.E]{Cor-FWsurvey}), the study of commensurating actions should not be reduced to the investigation of Properties PW and FW alone. A more ambitious goal is to obtain a complete classification, for groups of interest $G$, of all cardinal-definite functions on $G$, up to addition of bounded functions. 
Such classifications are currently available in some cases (e.g.\ for locally compact abelian groups, see \cite{Cor-FWsurvey}). In this paper, we adopt this perspective and spell out what our results say in terms of cardinal-definite functions (see Corollary~\ref{c-cardinal-definite} and Corollary \ref{c-cardinal-definite-contracting}).

\subsection*{Organization of the paper}
Section \ref{s-preliminaries} contains terminology and elementary preliminaries on Schreier graphs and  commensurating actions.

In Section \ref{s-products} we prove Theorem \ref{t-commensurating-product-discrete}, which provides a restriction on the transitive commensurating actions of a direct product $G=G_1\times G_2$ of finitely generated groups. This result, used in the proof of Theorem \ref{t-intro-germs}, might have some independent interest. 

In Section \ref{s-branch}, we deal with branch groups. After recalling the necessary preliminaries, we prove Theorem \ref{t-intro-germs} (in a more precise form, Theorem \ref{t-multi-ended-graphs}).

In Section \ref{s-cutpoints} we specialize the discussion to contracting self-similar groups. After recalling the necessary preliminaries, we prove Theorem \ref{t-intro-cutpoints} (in a more precise version, Theorem \ref{th:endsandcutpoints}). Then we give a general bound on the cardinal-definite functions arising from graphs of germs (Proposition \ref{p-contracting-cardinal-definite}), valid for contracting groups, that is used to derive Corollary \ref{c-intro-not-PW} (through the more precise Corollary \ref{c-cardinal-definite-contracting}).  In \S\ref{s-IMG}, we give an application by characterizing all iterated monodromy groups of post-critically finite rational functions $f\in \mathbb{C}(z)$ whose action on the tree boundary has one-ended graphs of germs (see Corollary \ref{c-IMG}). 

In Section \ref{s-sierpinski}, we consider the Sierpi\'nski carpet group $G$. We prove that it is just-infinite and branch and describe its limit $G$-space, which is naturally tiled by copies of the Sierpi\'nski carpet. We deduce from Corollary \ref{c-intro-FW} that it has Property FW.

\subsection*{Acknowledgments} We thank Yves Cornulier and Anthony Genevois for their useful comments.

\subsection*{Statement on AI use}
All of the mathematics in this paper was carried out by humans; no AI was involved in the proofs or in the search for ideas or references. Once the first draft was complete, we made limited use of AI to revise and improve the presentation:
\begin{itemize}
 \item Mistral AI
(CNRS subscription) helped drafting parts of the introduction;
\item Claude Opus~4.8 (Pro plan) was asked to produce a full referee report,
which helped us find some typos and minor inaccuracies (none affecting the core validity of proof).
\end{itemize}
The model's suggestions were verified and selectively adopted by the authors, who retain full responsibility for the paper's content. This statement is included
in accordance with the
\href{https://leidendeclaration.ai/}{Leiden Declaration on Artificial
Intelligence and Mathematics}.

\section{Preliminaries on Schreier graphs and commensurating actions} \label{s-preliminaries}
\subsection{Stabilizers, subgroups and Schreier graphs}

Let $G$ be a finitely generated group, endowed with a finite symmetric generating set $S$ (that will always be implicit and omitted from notation). The \emph{Schreier graph} of a transitive $G$-set $X$ is the labelled, oriented graph $\Gamma_X$ with vertex set $X$, and an edge $(x, sx)$ for every $x\in X$ and $s\in S$, labelled by $s\in S$. By definition, the Schreier graph of a subgroup $H\le G$ is the Schreier graph $\Gamma_{G/H}$ of its coset space. In this paper we consider only Schreier graphs of transitive $G$-sets; in particular all Schreier graphs are connected and (obviously) have degree bounded by $|S|$.  With slight abuse of notation, we will often write  $A\subset \Gamma_X$ instead of $A\subset X$ for a set of vertices of a Schreier graph (in particular for commensurated subsets).

Given $G$ acting on a set $X$, we use the notation $G_\xi$ for the stabilizer of a point  $\xi\in X$. 	We denote by $G(\xi)$ the orbit of $\xi$.	The \emph{orbital graph} of $\xi\in X$ is the Schreier graph of the $G$-set $G(
\xi)$, and is denoted by $\Gamma_\xi$. It is naturally isomorphic to $\Gamma_{G/G_
\xi}$. We will freely switch between these two points of view,  and think of the set of vertices of $\Gamma_\xi$ either as $G(
\xi)$, or as the coset space $G/G_\xi$.

If $X$ is a topological space, we also denote by  $G^0_\xi$ the \emph{germ stabilizer} of $\xi$, consisting of all elements $g\in G_\xi$ that fix point-wise a neighbourhood of $\xi$.  We have $G^0_\xi\unlhd G_\xi$, and the quotient $G_\xi/G^0_\xi$ is called the \emph{group of germs} at $\xi$. 

The coset space $G/G^0_\xi$ is naturally in correspondence with the set of {germs} of elements of $G$ at $\xi$. We denote the germ of $g\in G$ at a point $\xi\in X$ by $[g, \xi]$. It is the equivalence class of the pair $(g, \xi)$ with respect to the equivalence relation identifying $(g_1, \xi)$ with $(g_2, \xi)$ if there exists a neighborhood $U$ of $\xi$ such that $g_1|_U=g_2|_U$. We denote by $[G, \xi]=\{[g, \xi] \colon g\in G\}$ the set of all germs of elements of $G$ at $\xi$, which is a $G$-set with action $g\cdot [h, \xi]=[gh, \xi]$.
		
	The  Schreier graph $\Gamma_{G/G^0_\xi}$ is called the \emph{graph of germs} of $\xi$ and is denoted $\widetilde{\Gamma}_\xi$. Its set of vertices can be thought either as the coset space $G/G^0_\xi$, or as the set of germs $[G, \xi]$, and we will use both points of view. The natural map $G/G^0_\xi\to G/G_\xi$ induces a covering map $p\colon \Gammatilde_\xi\to \Gamma_\xi$,
which is a Galois covering with deck group $G_\xi/G^0_\xi$.

We recall that the space of subgroups $\sub(G)$ of a group $G$ is naturally a compact space when endowed with the \emph{Chabauty topology}, induced from the product topology on the set $\{0, 1\}^G$ of all subsets of $G$. When $G$ is finitely generated, and we fix a generating set $S$, the correspondence $H\mapsto (\Gamma_{G/H}, H)$ establishes a homeomorphic embedding of $\sub(G)$ into the space of  connected {rooted oriented $S$-labelled graphs}, consisting of all locally finite $S$-labelled oriented graphs $(\Gamma, x)$ with a preferred vertex.  The topology on the latter is defined by the metric $d((\Gamma_1, x_1),(\Gamma_2, x_2))=2^{-R},$ where $R>0$ is the largest integer such that the balls $B_{\Gamma_i}(x_i, R), i=1, 2$ are isomorphic as rooted $S$-labelled oriented graphs.

\subsection{Ends and commensurating actions}
        For a finite subset of vertices $\Sigma\subset \Gamma$ of a graph, we write $\pi_0(\Gamma\setminus \Sigma)$ for the set of connected components of $\Gamma\setminus \Sigma$. By definition, two points belong to the same component $U\in \pi_0(\Gamma\setminus \Sigma)$ if they can be connected by a path that avoids $\Sigma$. We further denote by $\pi_0(\Gamma\setminus \Sigma)_\infty$ the subset consisting of \emph{infinite} connected components. The \emph{number of ends} $\esf(\Gamma)\in \N\cup\{\infty\}$ is defined as \[\esf(\Gamma)=\sup_\Sigma |\pi_0(\Gamma\setminus \Sigma)_\infty|,\] where the supremum is taken over all finite subsets $\Sigma$. It is an invariant of quasi-isometry: in particular, if $\Gamma$ is a Schreier graph of a finitely generated group $G$, then $\esf(\Gamma)$ does not depend on the chosen generating set.

Recall from the introduction that given an action $G\acts X$,  a subset $A\subset X$ is commensurated if $|g(A)\triangle A|<\infty$ for every $g\in G$. In this case we call the function $\nu_A\colon G\to \N$ given by
$\nu_A(g):=|g(A)\triangle A|$
the \emph{cardinal-definite function} associated to $A$.\footnote{For context let us recall that a function $\ell$ on $G$ is cardinal-definite if and only if it has the form $\ell(g)=d(gx, x)$ for some action on a $\mathrm{CAT}(0)$-cube complex (equivalently a median graph), where $d$ is the $L^1$ metric. However, we will not use cube complexes, nor median graphs, in the paper.}
The subset $A$ is \emph{transfixed} if it has finite symmetric difference with a $G$-invariant subset.  A commensurated subset $A$ is transfixed if and only if the cardinal-definite function $\nu_A$ is bounded (Brailovsky, Pasechnik and Praeger \cite{BPP}, see also \cite[Theorem 4.E.1]{Cor-FWsurvey}).

A group $G$ has Property FW if and only if every commensurated subset in every $G$-set is transfixed. If a countable group $G$ has Property FW, then it is finitely generated \cite[Proposition 5.A.3(2)]{Cor-FWsurvey}.

A group $G$ has Property PW if and only if it admits a cardinal-definite function $\nu_A$ which is proper, meaning that $\{g\in G\colon \nu_A(g)\leq n
\}$ is finite for every $n\ge 0$.

 We say that two functions $f, h\colon G\to \R$ are at bounded distance if $f-h\in \ell^\infty(G)$.  As explained in \cite[\S2.E]{Cor-FWsurvey}, a conclusive goal in the study of commensurating actions of a group $G$  is a classification of all cardinal-definite functions of $G$ {up to bounded distance}. 

The following well-known properties of commensurating actions will be used without mention. We give the short proofs (when not tautological) for the convenience of the reader.

\begin{itemize}

\item A commensurated subset $A$ of a transitive $G$-set is transfixed if and only if $A$ or $A^c$ is finite. 
\item If $G$ is a finitely generated group, then a transitive $G$-set $X$ contains a commensurated subset which is not transfixed if and only if $\esf(\Gamma_X)\ge 2$. Indeed fix a finite generating set $S=S^{-1}$. If $A\subset X$ is commensurated and not transfixed, then $\partial A=\{a\in A\colon Sa\not \subset A\}$ is finite, and both $A$ and $A^c$ must contain infinite connected components of $\Gamma_X\setminus \partial A$, so that $|\pi_0(\Gamma_X\setminus \partial A)_\infty|\ge 2$. Conversely if $|\pi_0(\Gamma_X\setminus \Sigma)_\infty|\ge 2$ for some finite $\Sigma$, then any $A\in\pi_0(\Gamma_X\setminus \Sigma)_\infty$ is commensurated and not transfixed.

\item If $G$ is finitely generated and an action on $X$ commensurates a subset $A$, then for all but finitely many $G$-orbits $Y\subset X$, we have $Y\subset A$ or $Y\subset A^c$. (Informally, everything non-trivial happens in finitely many orbits). Indeed if $S=S^{-1}$ is a finite generating set, then for every orbit $Y$ such that $Y\cap A$ and $Y\cap A^c$ are non-empty, there is $s\in S$ and $a\in A\cap Y$ such that $s\cdot a\in Y\cap A^c$. Hence the number of such orbits $Y$ is at most $\sum_{s\in S} \nu_A(s)<\infty$.

\item As a consequence, if $G$ is finitely generated, then every unbounded cardinal-definite function $\nu_A$ on $G$ is at bounded distance from a sum $\sum_{i=1}^r \nu_{A_i}$, where each $A_i$ is a commensurated subset of a transitive $G$-set $X_i$  such that $\esf(\Gamma_{X_i})\ge 2$. 

\item In particular, a finitely generated group has Property FW if and only if every infinite Schreier graph of $G$ is one-ended.

\end{itemize}

We will need the following lemmas.

\begin{lemma}
\label{lem:endsofcoverings}
Let $G$ be a finitely generated group, and $K\le H\le G$ be subgroups such  that $[H:K]<\infty$. Then $\esf(\Gamma_{G/K})\ge \esf(\Gamma_{G/H})$. Furthermore, for every commensurated subset $A\subset G/H$, there exists a commensurated subset $\widetilde{A}\subset G/K$ such that $\nu_{\widetilde{A}}=[H: K] \nu_A$.

\end{lemma}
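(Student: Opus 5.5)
The plan is to use the natural projection $p\colon G/K\to G/H$, $gK\mapsto gH$. It is $G$-equivariant and every fibre has exactly $[H:K]$ points: the fibre over $gH$ is $\{ghK\colon h\in H\}$, which is in bijection with $H/K$ via left multiplication by $g$.

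I would prove the second assertion first and deduce the first from it. Given a commensurated $A\subset G/H$, set $\widetilde{A}=p^{-1}(A)$. Equivariance gives $g\widetilde{A}=p^{-1}(gA)$, so
\[
g\widetilde{A}\triangle\widetilde{A}=p^{-1}(gA\triangle A).
\]
Since the fibres have constant cardinality $[H:K]$, this yields $\nu_{\widetilde{A}}(g)=[H:K]\,\nu_A(g)$. In particular $\widetilde{A}$ is commensurated.

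For the inequality on ends I would argue as follows. If $\esf(\Gamma_{G/H})\le 1$, the inequality is immediate unless $G/K$ is finite. In that case $G/H$ is also finite, and both graphs have $0$ ends. So assume that for a finite set $\Sigma\subset G/H$ there are $n\ge 2$ distinct infinite components $U_1,\dots,U_n\in\pi_0(\Gamma_{G/H}\setminus\Sigma)_\infty$. Set $\widetilde{\Sigma}=p^{-1}(\Sigma)$, which is finite because the fibres are finite.

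The map $p$ sends edges to edges, since it commutes with the action of the generators. Hence any path in $\Gamma_{G/K}\setminus\widetilde{\Sigma}$ projects to a path in $\Gamma_{G/H}\setminus\Sigma$. It follows that the sets $p^{-1}(U_i)$ are pairwise disconnected in $\Gamma_{G/K}\setminus\widetilde{\Sigma}$, and each is infinite. Each $p^{-1}(U_i)$ is a union of components of $\Gamma_{G/K}\setminus\widetilde{\Sigma}$, but only finitely many of them, since its boundary is finite. Because $p^{-1}(U_i)$ is infinite, at least one of these components is infinite. This produces at least $n$ distinct infinite components in $\Gamma_{G/K}\setminus\widetilde{\Sigma}$.

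Taking the supremum over $\Sigma$ gives $\esf(\Gamma_{G/K})\ge\esf(\Gamma_{G/H})$. The only step needing care is this component count, and finiteness of the fibres handles it.
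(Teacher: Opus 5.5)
Your proof is correct and follows essentially the same route as the paper: pull $A$ back along the covering $p\colon\Gamma_{G/K}\to\Gamma_{G/H}$, whose fibres all have size $[H:K]$, and compare the infinite components of $\Gamma_{G/H}\setminus\Sigma$ with those of $\Gamma_{G/K}\setminus p^{-1}(\Sigma)$. The paper counts components via the induced surjection $p_\ast$ on components together with finite degree, while you take preimages of components and use that $\Gamma_{G/K}\setminus p^{-1}(\Sigma)$ has only finitely many components; that finiteness really comes from connectedness and local finiteness of $\Gamma_{G/K}$, and it is an equally valid way to finish.
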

\begin{proof}
 The natural map $G/K\to G/H$ gives rise to a covering map $p\colon \Gamma_{G/K}\to \Gamma_{G/H}$, of degree $[H:K]$. Let $\Sigma\subset \Gamma_{G/H}$ be a finite subset. 
  Then $p^{-1}(\Sigma)$ is finite. The map $p$ induces a surjective map $p_\ast \colon\pi_0(\Gamma_{G/K}\setminus p^{-1}(\Sigma))\to \pi_0(\Gamma_{G/H}\setminus \Sigma)$.  Since $p$ has finite degree, infinite subsets have infinite image, and thus $p_\ast(\pi_0(\Gamma_{G/K}\setminus p^{-1}(\Sigma))_\infty)=\pi_0(\Gamma_{G/H}\setminus \Sigma)_\infty$. It follows that 
  $|\pi_0(\Gamma_{G/K}\setminus p^{-1}(\Sigma))_\infty|\ge |\pi_0(\Gamma_{G/H}\setminus \Sigma)_\infty|$. 
  
  Now suppose that $A\subset G/H$ is a commensurated subset. Then it is not difficult to see that $p^{-1}(A)$ is commensurated, and $\nu_{p^{-1}(A)}=[H:K]\nu_A$.  \qedhere
\end{proof}

\begin{lemma} \label{l-reduction-transitive}
	Let $G$ be a group and  $\alpha \colon G\acts X$  be transitive action having a commensurated subset $A\subset X$ which is not transfixed. Let $N$ be a finitely generated normal subgroup of $G$ such that $\alpha(N)$ is infinite. Then the following hold:
	\begin{enumerate}
	
	\item the action $\alpha \colon N \acts X$ has finitely many orbits;
	\item	 for every $N$-orbit $Y\subset X$ the sets $A\cap Y$ and $Y\setminus A$ are both infinite.\end{enumerate}	\end{lemma}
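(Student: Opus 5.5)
Both parts rest on the finiteness of $\partial A$. Fix a finite symmetric generating set $T$ of $N$. Since $A$ is commensurated by $G$, and hence by $N$, the set $F=\bigcup_{t\in T}(tA\triangle A)$ is finite. Consider an $N$-orbit $Y$ on which $A\cap Y$ and $Y\setminus A$ are both non-empty. Since $Y$ is connected in the Schreier graph of $N$ with respect to $T$, there is an edge from $A\cap Y$ to $Y\setminus A$, so $Y$ meets $F$. Hence only finitely many $N$-orbits are split by $A$. Call an $N$-orbit \emph{pure} if it lies in $A$ or in $A^c$.

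For (1), note that normality of $N$ makes $G$ permute the $N$-orbits transitively, because $X$ is $G$-transitive. Moreover, all $N$-orbits are mutually isomorphic $N$-sets up to twisting by automorphisms of $N$, via $g\colon Y\to gY$. I argue by contradiction: suppose there are infinitely many $N$-orbits. Let $\mathcal{O}$ be the set of $N$-orbits, a transitive $G$-set, and let $B\subset\mathcal{O}$ be the set of pure orbits contained in $A$. Since $g$ maps $N$-orbits to $N$-orbits and $gA\triangle A$ is finite, $gB\triangle B$ is finite provided each orbit is infinite. It suffices that each orbit have more than $|gA\triangle A|$ points; this holds because $\alpha(N)$ infinite and $N$ finitely generated imply that the $N$-orbits are infinite. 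Indeed, a finite orbit $Y$ would force all orbits to be finite of the same size (they are conjugate), and then I must exclude a faithful $N$ action on infinitely many finite orbits. This is the main obstacle. My plan for it is to handle the case of finite $N$-orbits separately: if all $N$-orbits are finite, then $A$ is a finite modification of a union of $N$-orbits. Hence $\nu_A$ restricted to $N$ is bounded by the finitely many split orbits. One then shows $A$ commensurated by $G$ induces the commensurated $B$ on $\mathcal{O}$ with $\nu_A\le C\nu_B+$const. In the infinite-orbit case, $B$ is commensurated in $\mathcal{O}$ and $N$ acts trivially on $\mathcal{O}$. I then argue that $B$ is transfixed would make $A$ transfixed (if $B$ or $B^c$ is finite, $A$ differs from $\emptyset$ or $X$ only on finitely many orbits, and on each of them by a set commensurated by $G_Y$). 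So I need instead to derive a contradiction from $\mathcal{O}$ being infinite and $B$ commensurated. The cleanest route I would try: the $G$-action on $\mathcal{O}$ factors through $G/N$, and $\nu_A(g)\ge |Y|\cdot|gB\triangle B|$ with $|Y|=\infty$. Finiteness of $\nu_A$ then forces $gB=B$ for all $g$, so $B$ is $G$-invariant. By transitivity, $B=\emptyset$ or $B=\mathcal{O}$, and the complement of the pure orbits is finite. So $A$ (or $A^c$) lies, up to finitely many orbits, in a finite union of orbits. Passing to the finite-index subgroup of $G$ stabilizing those finitely many split orbits, infinitely many orbits being $G$-translates contradicts transitivity together with the finiteness of $gA\triangle A$.

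For (2), suppose some $N$-orbit $Y$ has $A\cap Y$ finite. Since there are finitely many orbits (by (1)) and $G$ permutes them, the stabilizer $G_Y$ has finite index. Now $\nu_A(g)\ge|g(A\cap Y)\triangle (A\cap gY)|$, and $\alpha(N)$ infinite makes each $N$-orbit infinite. Using $G$ transitive on orbits, I transport the finiteness: $A\cap gY$ differs from $g(A\cap Y)$ by a finite set, so $A\cap gY$ is finite for every $g$. Hence $A$ is finite, contradicting non-transfixedness. The same argument applied to $A^c$ handles $Y\setminus A$.
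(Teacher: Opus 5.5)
There is a genuine gap in part (1): you never prove that the $N$-orbits are infinite. You assert it, then call it the main obstacle. The workaround you propose for finite orbits (showing that $A$ is a finite modification of a union of orbits and that $\nu_A\le C\nu_B+\mathrm{const}$ on the orbit set $\mathcal{O}$) cannot lead to a contradiction, because it uses finite generation of $N$ only to get finitely many split orbits.

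That is not enough. Take $G=\bigoplus_{n\in\Z}\Z/2\Z\rtimes\Z$, $N=\bigoplus_{n\in\Z}\Z/2\Z$ and $X=G/\bigoplus_{n\neq 0}\Z/2\Z$. This $X$ is $\Z\times\Z/2\Z$, where $N$ flips the second coordinate independently at each $m$ and the generator of $\Z$ shifts $m$. Then $N$ acts faithfully, and $A=\{m\ge 0\}\times\Z/2\Z$ is commensurated, not transfixed, and a union of $N$-orbits. Every step of your plan goes through, yet there are infinitely many $N$-orbits, each of size $2$.

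The missing idea is short. All $N$-orbits have the same cardinality, since $G$ permutes them transitively. If this cardinality were a finite $k$, every point stabilizer in $N$ would have index $k$. A finitely generated group has only finitely many subgroups of index $k$, so $\ker(\alpha|_N)$ would be a finite intersection of finite-index subgroups, and $\alpha(N)$ would be finite.

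Even once orbits are known to be infinite, the rest of your (1) needs repair:
\begin{itemize}
\item The inequality $\nu_A(g)\ge|Y|\cdot|gB\triangle B|$ fails when $gB\triangle B$ contains split orbits, so you get neither $gB=B$ nor $B\in\{\emptyset,\mathcal{O}\}$.
\item Under your contradiction hypothesis, the stabilizer in $G$ of a split orbit has infinite index (equal to the number of orbits), so your last step is unfounded.
\end{itemize}

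The right tool is the transport step you already use in (2). The sets $g(Y\cap A)$ and $gY\cap A$ differ by a subset of $gA\triangle A$. Hence finiteness of $Y\cap A$, and likewise of $Y\setminus A$, is a $G$-invariant property of the orbit $Y$, and so it is the same for all orbits.

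This is the paper's argument, organized in the opposite order from yours. With infinite orbits, $Y\cap A$ and $Y\setminus A$ cannot both be finite. If all $Y\cap A$ were finite, every orbit meeting $A$ would be split, so $A$ would be a finite union of finite sets, contradicting non-transfixedness; the case of $A^c$ is symmetric. This proves (2) without assuming (1). Then (1) follows at once, since every orbit is split and only finitely many orbits are split. As written, your (2) relies on (1) to conclude that $A$ is finite, so the dependency should be reversed.
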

	
		\begin{proof}
	The group $G$ acts transitively on the set $\mathcal{P}$ of $N$-orbits, from which we deduce that all $N$-orbits have the same cardinality. This cardinality cannot be finite, since this would imply that $\alpha(N)$ is finite using that $N$ is finitely generated. Therefore all $N$-orbits are infinite.  We classify $N$-orbits $Y\in  \mathcal{P}$ into four different types according to whether the intersections $Y\cap A$ and $Y\cap A^c$ are finite  or infinite; say that $Y$ is of type $(\epsilon_A, \epsilon_{A^c})$ with $\epsilon_B\in \{f, \infty\}$, where $\epsilon_B=f$ (respectively $\epsilon_B=\infty$) if  the cardinality of $Y\cap B$ is finite (respectively infinite). Let $\mathcal{P}_{\epsilon_A, \epsilon_{A^c}}$ be the set of orbits of a given type.  The fact that $A$ is commensurated by $G$ easily implies that each set $\mathcal{P}_{\epsilon_A, \epsilon_{A^c}}$ is $G$-invariant, so exactly one of them is non-empty.  It is not possible that  $\mathcal{P}=\mathcal{P}_{f, f}$ since we already argued that $N$-orbits are infinite. Assume by contradiction that $\mathcal{P}=\mathcal{P}_{f,\infty}$. Note that since $N$ is finitely generated, there are only finitely many orbits such that both $Y\cap A$ and $Y\cap A^c$ are non-empty. Thus if $\mathcal{P}=\mathcal{P}_{f, \infty}$ we deduce that $A$ is finite, a contradiction. Similarly if $\mathcal{P}=\mathcal{P}_{\infty, f}$ then $A^c$ is finite, again a contradiction. It follows that $\mathcal{P}=\mathcal{P}_{\infty, \infty}$, proving the second item in the statement. It follows in particular that every $N$-orbit intersects both $A$ and $A^c$ and thus there are only finitely many $N$-orbits.		\end{proof}

\section{Commensurating actions of direct products} \label{s-products}
	The following result will be used in the proof of Theorem \ref{t-intro-germs}.
	\begin{thm} \label{t-commensurating-product-discrete}
	Let $G=G_1\times G_2$ where each $G_i$ is  finitely generated. Let $\alpha \colon G\acts X$ be a transitive action whose Schreier graph $\Gamma_X$ has at least two ends. Then  one of the following holds.
	\begin{enumerate}[label=(\roman*)]
	\item There exists $i=1, 2$ such that $\alpha(G_i)$ is finite. 
	\item   $\alpha(G)$ is virtually abelian.  
	\end{enumerate}
	\end{thm}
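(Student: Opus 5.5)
Assume (i) fails, so $\alpha(G_1)$ and $\alpha(G_2)$ are both infinite; the goal is then to show that $\alpha(G)$ is virtually abelian. Fix a commensurated subset $A\subset X$ that is not transfixed; it exists because $\esf(\Gamma_X)\ge 2$. Each $G_i$ is a finitely generated normal subgroup of $G$, so Lemma \ref{l-reduction-transitive} applies to both. Hence each $G_i$ has finitely many orbits on $X$, and every $G_i$-orbit $Y$ meets both $A$ and $A^c$ in infinite sets.

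The first step is to analyse the action of $G_1$ on one $G_2$-orbit. Fix $x\in X$ and let $Y=G_2x$. The groups $G_1$ and $G_2$ commute, so $G_1$ permutes the $G_2$-orbits, and there are only finitely many of them. Therefore the stabilizer $L\le G_1$ of $Y$ has finite index in $G_1$, and $L$ is finitely generated. For $g\in L$, the map $y\mapsto gy$ is a permutation of $Y$ commuting with the transitive $G_2$-action. Such permutations form the group $N_{G_2}(H)/H$ with $H=(G_2)_x$, acting on the right of $G_2/H$. So $L$ acts on $Y$ through a homomorphism $\phi\colon L\to N_{G_2}(H)/H$, and $A\cap Y$ is commensurated by $L$ and by $G_2$.

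The second step, which I expect to be the main obstacle, is to show that $\phi(L)$ is virtually abelian and that it essentially controls $\alpha(L)$. The point is that $\phi(L)$ consists of deck transformations of the Schreier graph $\Gamma_Y$ of $G_2$. Each element $g\in L$ moves every vertex by the same bounded distance: if $y=hx$ with $h\in G_2$, then $gy=h(gx)$, so $d(y,gy)\le d_{G_2}(x,gx)$, and this bound is uniform in $y$. The set $A\cap Y$ is an infinite, co-infinite subset with finite boundary, commensurated by these graph automorphisms of bounded displacement. I would first set $\ell(g)=|g(A\cap Y)\triangle (A\cap Y)|$. Because $\ell$ is at bounded distance from a sum over the finitely many ends separated by $\partial A$, it should give a homomorphism to $\Z$ (the ``flux'' through a cut). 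I would then follow the standard argument for groups acting on graphs with several ends by bounded-displacement automorphisms, in the style of Stallings and Freudenthal–Hopf. A group acting freely and with bounded displacement on a multi-ended graph whose end structure is preserved should be virtually cyclic on each orbit of $\phi(L)$, or at least should have a finite-index subgroup fixing the ends determined by $A$. The flux homomorphism $L\to\Z$ then has kernel acting with bounded $\ell$. That kernel transfixes $A\cap Y$ and so has finite orbits, because it commutes with $G_2$, which is transitive on $Y$, and it moves a point by a bounded amount. A finitely generated group of commuting bounded-displacement permutations with finite orbits of uniformly bounded size on a transitive $G_2$-set acts through a finite group, since it is a subgroup of $N(H)/H$ moving $x$ within a finite set. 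This gives $\phi(L)$ finite-by-cyclic, hence virtually abelian. The symmetric argument applies to $G_2$.

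The last step is to assemble the pieces. Finite-index subgroups $L_1\le G_1$ and $L_2\le G_2$ act on each of the finitely many orbits through virtually abelian groups. Their product $L_1\times L_2$ has finite index in $G$, and $\alpha(L_1\times L_2)$ embeds in the finite product of these virtually abelian images over the finitely many orbits. Thus $\alpha(G)$ is virtually abelian, which is alternative (ii).
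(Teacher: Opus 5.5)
\textbf{There is a genuine gap in Step~2.} Your first and last steps are essentially fine and match the paper's reduction and assembly. Step~2, however, carries all the content, and it does not work.

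First, the bounded-displacement claim is false. An element $g\in L$ commutes with $G_2$, so it is an automorphism of the labelled graph $\Gamma_Y$. But if $gx=kx$ with $k\in G_2$, then $g(hx)=hkx=(hkh^{-1})\cdot hx$. So the displacement at $hx$ is governed by the word length of $hkh^{-1}$, not of $k$: left-translating a path by $h$ does not give a path in a graph whose edges are $z\to sz$. For instance, take the Cayley graph of $D_\infty=\langle t,r\rangle$ with edges $z\to sz$. Right multiplication by $r$ commutes with the left-regular action, yet it sends $t^n$ to $t^nr=(t^{2n}r)\,t^n$, which is at distance $2|n|+1$. So nothing supports the structural conclusion you want to draw from bounded displacement. (Your finite-orbit claim for a transfixing subgroup can be rescued, using instead that elements of $G_2$ have bounded displacement in $\Gamma_Y$, but that is a side point.)

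More seriously, the step that must do the real work is only asserted (``should be virtually cyclic'', ``in the style of Stallings and Freudenthal--Hopf''). That step is excluding that the relevant Schreier graphs have infinitely many ends. Freudenthal--Hopf only says that a quasi-transitive graph has $0,1,2$ or infinitely many ends. It does not exclude the last case, which is precisely the case a proof must rule out.

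Likewise, ``the kernel of the flux homomorphism acts with bounded $\ell$'' is not a general fact about commensurating actions. Every homomorphism $D_\infty\to\Z$ is trivial, yet $D_\infty$ acting on itself has a two-ended Schreier graph and hence an unbounded cardinal-definite function. Your claim essentially presupposes that $\ell$ is at bounded distance from the absolute value of the flux on a finite-index subgroup, i.e.\ the two-ended picture you are trying to prove.

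The missing idea is the paper's, in two parts.
\begin{enumerate}
\item Pass to an orbit $Z$ of a finite-index $H_2\le G_2$ whose stabilizer $H_1\le G_1$ also acts transitively on $Z$. Then each Schreier graph $\Gamma_i$ of $H_i$ on $Z$ is a Cayley graph of a quotient $H_i/N_i$. Each $\Gamma_i$ is multi-ended and is acted on transitively by automorphisms by the other factor, hence has $2$ or infinitely many ends.
\item A geometric lemma excludes infinitely many ends: if a connected locally finite graph $\Gamma$ with infinitely many ends carries a transitive finitely generated group $H$ of automorphisms, then adding the edges of the Schreier graph of $H$ yields a one-ended graph $\Gamma_*$. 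To see this, take a finite $\Sigma$ and infinite components $A_1,A_2$ of $\Gamma\setminus\Sigma$ lying in distinct components of $\Gamma_*\setminus\Sigma$. Use transitivity to move a finite set separating at least $2|\Sigma|+1$ ends of $\Gamma$ deep into each $A_i$. By pigeonhole, some $h\in H$ maps an infinite component contained in $A_1$ into $A_2$. But bounded displacement of $h$ in $\Gamma_*$ allows only finitely many such points.
\end{enumerate}
Applied to the Schreier graph of $H_1\times H_2$ on $Z$, which is multi-ended, this forces each $\Gamma_i$ to have two ends, and Freudenthal--Hopf then gives that $H_i/N_i$ is virtually cyclic. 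Your proposal has no substitute for this step.
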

	
The main geometric argument of the proof is contained in the following lemma.
		\begin{lemma}\label{l-geometric-argument}
		Let $\Gamma$ be a connected locally finite graph with infinitely many ends, and $H$ be a finitely generated group of automorphisms of $\Gamma$ which acts transitively on $\Gamma$. Let $\Gamma_*$ be the graph obtained by adding to $\Gamma$ all edges of the Schreier graph of the action of $H$ on $\Gamma$ (with respect to some finite symmetric generating set of $H$). Then $\Gamma_*$ is one-ended.
\end{lemma}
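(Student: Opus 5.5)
The plan is to argue by contradiction and turn a second end of $\Gamma_*$ into a nontrivial clopen $H$-invariant set of ends of $\Gamma$. Such a set cannot exist, because $H$ acts transitively on an infinitely-ended graph and hence minimally on its space of ends.

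Let $S$ be the finite symmetric generating set of $H$ used to define $\Gamma_*$. Suppose $\Gamma_*$ has at least two ends. Since $\Gamma_*$ has the same vertex set as $\Gamma$ and is locally finite, there is a finite $\Sigma$ and an infinite component of $\Gamma_*\setminus\Sigma$. Let $A$ be this component; then $A^c$ is infinite too. Because $\Gamma_*\supset\Gamma$, the set $A$ has finite edge boundary in $\Gamma$. Because $\Gamma_*$ also contains all edges $(x,sx)$ with $s\in S$, we get $|sA\triangle A|<\infty$ for every $s\in S$. So $A$ is commensurated by $H$, exactly as in the discussion of ends and commensurated subsets in Section~\ref{s-preliminaries}.

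Next I pass to the end space $\partial\Gamma$. A subset of vertices $B$ with finite $\Gamma$-boundary determines a clopen set $\Omega_B\subset\partial\Gamma$: the ends all of whose neighbourhoods eventually lie in $B$. If $B$ is infinite, then $\Omega_B\neq\emptyset$, by König's lemma and local finiteness. Applying this to $A$ and to $A^c$ shows that $\Omega_A$ is a nonempty proper clopen subset of $\partial\Gamma$. Since $H$ acts by automorphisms of $\Gamma$, we have $h\Omega_A=\Omega_{hA}$. Commensuration gives $hA\triangle A$ finite, hence $\Omega_{hA}=\Omega_A$. Thus $\Omega_A$ is $H$-invariant.

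It remains to prove that $H$ acts minimally on $\partial\Gamma$; I expect this to be the main step. First, since $\Gamma$ has infinitely many ends, there is a finite connected set $\Sigma_0$ with $\Gamma\setminus\Sigma_0$ having at least three infinite components. Now take a nonempty clopen $V\subset\partial\Gamma$, cut out by some finite set $F$ with $V=\Omega_C$ for an infinite component $C$ of $\Gamma\setminus F$. Using transitivity of $H$, choose $h$ with $h\Sigma_0\subset C$ and $h\Sigma_0$ at distance larger than $\operatorname{diam}(F)+\operatorname{diam}(\Sigma_0)$ from $F$. Then $F$ lies in a single component of $\Gamma\setminus h\Sigma_0$. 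Hence at least two infinite components of $\Gamma\setminus h\Sigma_0$ avoid $F$, and so lie inside $C$.

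Given any end $\omega$, consider $h\omega$. If it lies in one of those components, then $h\omega\in V$. Otherwise $h\omega$ lies in the component containing $F$, or in some component not inside $C$. In that case I compose with an element $h'$ that swaps roles among the components, for instance by choosing a second translate $h'\Sigma_0$ deep inside a different one of the good components. The aim is to find some $g\in H$ with $g\omega\in\Omega_C$. This is the classical argument that a group acting transitively on vertices of an infinitely-ended graph has every orbit dense in the end space, and I would carry out the bookkeeping there carefully.

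With minimality, both $\Omega_A$ and its complement are nonempty, closed and $H$-invariant, which is a contradiction. Hence $\Gamma_*$ is one-ended.
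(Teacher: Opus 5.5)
Your first two steps are correct: a second end of $\Gamma_*$ gives a set $A$ with finite $\Gamma$-boundary, commensurated by $H$, with $A$ and $A^c$ infinite. This yields a nonempty proper $H$-invariant clopen set $\Omega_A\subset\partial\Gamma$. The reduction is in fact reversible, so the whole content of the lemma sits in your last step.

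That last step is where the gap is. It is not true that a finitely generated group acting vertex-transitively on a locally finite graph with infinitely many ends acts minimally on its ends. Take $H=BS(1,2)=\langle a,t\mid tat^{-1}=a^2\rangle$ acting on its Bass--Serre tree. This tree is $3$-regular, and the action is faithful with a single orbit of vertices. But, as for any ascending HNN extension, $H$ fixes an end $\omega_0$. No element moves $\omega_0$ into a basic clopen set avoiding it, so your ``swap roles among the components'' bookkeeping cannot be carried out. Minimality is a classical fact for a group acting freely and transitively on vertices, i.e.\ on its own Cayley graph. That is essentially the situation in the application in Theorem~\ref{t-commensurating-product-discrete}, but the lemma makes no freeness assumption, and in any case you leave this step unproved.

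What is true, and is all you need, is that $\partial\Gamma$ admits no nontrivial $H$-invariant clopen partition. Your own tools prove this once you use that \emph{both} $\Omega_A$ and $\Omega_{A^c}$ are invariant.
Let $F$ be a finite set containing the endpoints of the $\Gamma$-boundary edges of $A$. Pick infinite components $C_1\subset A$ and $C_2\subset A^c$ of $\Gamma\setminus F$, and $g_1,g_2\in H$ with $g_i\Sigma_0\subset C_i$ far from $F$. By your argument, every infinite component of $\Gamma\setminus g_i\Sigma_0$, except possibly the one containing $F$, lies in $C_i$. Since $\Gamma\setminus\Sigma_0$ has at least three infinite components, one of them, $Z$, satisfies $g_1Z\subset C_1$ and $g_2Z\subset C_2$. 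Then $g_2g_1^{-1}$ maps the nonempty set of ends of $g_1Z$, which lies in $\Omega_A$, onto the ends of $g_2Z$, which lie in $\Omega_{A^c}$. This contradicts invariance.

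This is essentially the paper's proof. The paper runs the same pigeonhole directly on components of $\Gamma\setminus\Sigma$, using that each $h\in H$ sends only finitely many points of $A_1$ into $A_2$. It therefore needs $2|\Sigma|+1$ infinite components instead of three, but never passes to the end space.
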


\begin{proof}
Assume by contradiction that $\esf(\Gamma_*)\ge 2$. Then we can find a finite set of vertices $\Sigma$ such that  $\pi_0(\Gamma_*\setminus \Sigma)_\infty$ contains two distinct infinite components $B_1, B_2$. Each $B_i$ is a union of elements of  $\pi_0(\Gamma\setminus \Sigma)$, necessarily finitely many (since the whole $\pi_0(\Gamma\setminus \Sigma)$ is finite, as $\Gamma$ is connected and locally finite).  Since each $B_i$ is infinite, it follows that we can find  $A_i\in\pi_0( \Gamma \setminus \Sigma)_\infty$ contained in $B_i$ for $i=1, 2$. 
Then every $h\in H$ maps only finitely many points of $A_1$ inside $A_2$ and vice versa (indeed, if $h$ is a product of $k$ generators, then every point $x\in A_i$ such that $h(x)\in A_{i+1}$ must be the endpoint of a path of length $k$ in $\Gamma_\ast$ passing through $\Sigma$, so all such points are in $B_{\Gamma_\ast}(\Sigma, k)$). Set $n=|\Sigma|$. Since $\Gamma$ has infinitely many ends, we can find a finite set $\Xi$ such that $|\pi_0(\Gamma \setminus \Xi)_\infty|\ge 2n+1$. We claim that there exists $g_1, g_2\in H$ such that $g_i(\Xi)$ is contained in $A_i$ for $i=1, 2$. To see this, fix $v_0\in \Gamma$ and let $R>0$ be such that $\Xi$ is contained in the ball $B_\Gamma(v_0, R)$.  For $i=1, 2$, pick $v_i\in A_i$ whose distance from $\Sigma$ in $\Gamma_*$ is $>R$. Then any choice of $g_i\in H$  such that $g_i(v_0)=v_i$ must satisfy $g_i(B_\Gamma(v_0, R))\subset A_i$, and thus $g_i(\Xi)\subset A_i$, showing the claim. Now set $\Xi_i:= g_i(\Xi)$ and note that if $Y\in\pi_0(\Gamma \setminus \Xi_i)$  does not contain any point in $\Sigma$, then $Y\subset A_i$: indeed some point of $Y$ is adjacent to a point $x\in \Xi_i$, and can be connected to every other point of $Y$ by a path in $\Gamma$ avoiding $\Sigma$, thus all points of $Y$ are in the component of $\Gamma\setminus \Sigma$ containing $x$, namely $A_i$. The element $h=g_2g_1^{-1}$ maps bijectively  $\pi_0(\Gamma \setminus \Xi_1)_\infty$ to  $\pi_0(\Gamma \setminus \Xi_2)_\infty$. Since both sets have at least $2n+1$ elements, and $|\Sigma|\le n$, we deduce that there must exist an infinite connected component $Y$ of $\Gamma \setminus \Xi_1$ such that $Y\subset A_1$ and  $h(Y)\subset A_2$. This contradicts that $h$ maps only finitely many points of $A_1$ inside $A_2$. \qedhere 
\end{proof}

\begin{proof}[Proof of Theorem \ref{t-commensurating-product-discrete}]
Let $\alpha\colon G\acts X$ be as in the statement, and $A\subset X$ be a commensurated subset that is not transfixed. Assume that $\alpha(G_i)$ is infinite for $i=1, 2$. Fix a  $G_1$-orbit  $Y\subset X$, and let $H_2$ be the stabilizer of $Y$ in $G_2$. By Lemma \ref{l-reduction-transitive} applied to $N=G_1$, both $Y\cap A$ and $Y\cap A^c$ are infinite, and $H_2$ has finite index in $G_2$. Note that the image of the action of $H_2$ on $Y$ cannot be finite, since otherwise $G_2$ would have a finite orbit, and thus $\alpha(G_2)$ would be finite by Lemma \ref{l-reduction-transitive}. Applying again Lemma \ref{l-reduction-transitive}  to the action of $G_1\times H_2$ on $Y$ and the subset $A\cap Y$, we obtain that for every $H_2$-orbit $Z\subset Y$ the intersections $Z\cap A$ and $Z\cap A^c$ are both infinite. Fix an $H_2$-orbit $Z\subset Y$ and let $H_1$ be the stabilizer of $Z$ in $G_1$. The same reasoning as above shows that $H_1$ has finite index in $G_1$. Moreover since $G_1$ acts transitively on $Y$ and the $H_2$-orbits form a $G_1$-invariant partition, we have that $H_1$ must act transitively on $Z$. Note that since $H_2$ acts transitively on $Z$ and its action commutes with $H_1$, the point stabilizers for the action of $H_1$ on $Z$ must coincide, and thus must be all equal to some given normal subgroup $N_1$ of $H_1$, and similarly the point stabilizers of the action of $H_2$ on $Z$ must be all equal to some normal subgroup $N_2$ of $H_2$.

Now for each $i=1, 2$ let $\Gamma_i$ be the Schreier graph of the action of $H_i$ on $Z$ (which coincides with the Cayley graph of $H_i/N_i$). Since both actions commensurate $A\cap Z$, each graph $\Gamma_i$ has more than one end. Moreover $H_1$ is a transitive group of automorphisms of $\Gamma_2$ and vice versa. Thus $\Gamma_1$ and $\Gamma_2$  have either 2 or infinitely many ends. 

Assume by contradiction that $\Gamma_1$ has infinitely many ends. We apply Lemma \ref{l-geometric-argument} to the action of $H_2$ on $\Gamma_1$. Note that the graph denoted $(\Gamma_1)_*$ in Lemma \ref{l-geometric-argument} is nothing but the Schreier graph of the action of $H_1\times H_2$ on $Z$, and since this action commensurates $A\cap Z$ the graph $(\Gamma_1)_*$ must have more than one end. This is a contradiction, and we deduce that $\Gamma_1$ must have two ends, and thus $H_1/N_1$ is virtually cyclic, by the classical Freudenthal--Hopf theorem \cite{Freudenthal, Hopf}. The same reasoning shows that $H_2/N_2$ is virtually cyclic. 

Now recall that $Y\subset X$ was an arbitrary $G_1$-orbit. The $H_2$-orbits $Z\subset Y$ form a finite $G_1$-invariant partition $\mathcal{P}$. We have just argued that the stabilizer of each $Z\in \mathcal{P}$ acts on $Z$ via a virtually cyclic quotient. This implies that $G_1$ acts on $Y$ via a virtually abelian quotient. Since there are finitely many $G_1$-orbits $Y$, this implies that $\alpha(G_1)$ is virtually abelian. The symmetric argument proves that $\alpha(G_2)$ is virtually abelian, and so $\alpha(G)$ is virtually abelian. \qedhere\end{proof}

\section{Branch groups} \label{s-branch}

\subsection{Definitions and preliminaries}

 \label{s-preliminaries-trees}
 Let $T$ be a locally finite rooted tree. We denote by $T^{(n)}$ the set of vertices at distance $n$ from the root. We shall always assume that $T$ is \emph{spherically homogeneous}, i.e. that all vertices in $T^{(n)}$ have the same degree. For a vertex $v\in T$, we write $|v|$ for its distance from the root, and  $T_v\subset T$ the subtree rooted at $v$, consisting of all vertices $w$ such that the geodesic from the root to $w$ passes through $v$. The boundary $\partial T$ is the set of geodesic rays starting at the root; it is naturally a compact space.

 We denote by $\aut(T)$ the group of automorphisms of $T$ that fix the root.  For $G\le \aut(T)$, we denote by $G_v$ the stabilizer of $v\in T$, and by $G_v^0$ its normal subgroup
\[G_v^0=\{g\in G_v \colon g|_{T_v}=\mathsf{id}_{T_v}\}.\]

For a boundary point $\xi\in \partial T$, the notations $G_\xi$ and $G^0_\xi$ stand for the stabilizer and the germ-stabilizer (introduced in \S\ref{s-preliminaries}). If $v_n\in T$ is the ordered sequence of vertices of a ray $\xi\in \partial T$, we have   $G_{v_n}\ge G_{v_{n+1}}$ and $G_{v_n}^0\le G_{v_{n+1}}^0$, and
\begin{equation}
    \label{eq:stabilizerslimit}
    G_\xi=\bigcap_{n\ge 0}G_{v_n},\qquad G_\xi^0=\bigcup_{n\ge 0}G_{v_n}^0.
\end{equation}

 The group $G\le \aut(T)$ is \emph{level-transitive} if its action on $T^{(n)}$ is transitive for every $n$. \begin{defin} \label{d-graphs-trees}
     Let $G\le \aut(T)$ be a level-transitive subgroup. \begin{enumerate} 
     
     \item We denote by $\Gamma_n$ the Schreier graph of its action on $T^{(n)}$, i.e. $\Gamma_n=\Gamma_{G/G_v}$ for any $v\in T^{(n)}$.

     \item We denote by $\Xi_n$ the Schreier graph $\Gamma_{G/G^0_v}$, for $v\in T^{(n)}$.
     \end{enumerate}
 \end{defin}

By level-transitivity, the subgroups $G^{0}_v$ are all conjugate for $v\in T^{(n)}$, and so the graph $\Xi_n$ does not depend on the choice of $v$. Note that the coset space $G/G^0_v$ is naturally identified with the set of restrictions $\{g|_{T_v}\colon g\in G\}$ for any $v\in T^{(n)}$.

For a ray $\xi=(v_n)_n\in \partial T$, the inclusions $G_\xi\le G_{v_n}$ and $G_{v_n}^0\le G_{\xi}^0$ induce covering maps of labeled directed graphs $\Gamma_\xi\arr\Gamma_n$ and $\Xi_n\arr\Gammatilde_{\xi}$.It follows from~\eqref{eq:stabilizerslimit} that for any ray $\xi=(v_n)\in \partial T$,  we have the convergences $\lim_n G^0_{v_n}=G^0_\xi$ and $\lim_n G_{v_n}=G_\xi$ in $\sub(G)$. As a consequence we have the following.

\begin{prop}
\label{pr:convergenceofgraphs}
 Let $\xi=(v_n)_n\in\partial T$. Then for every $R>0$ there exists $n$ such that the covering maps $\Gamma_\xi\arr\Gamma_n$ and $\Xi_n\arr\Gammatilde_\xi$ induce isomorphisms $B_{\Gamma_\xi}(\xi, R)\arr B_{\Gamma_n}(v_n, R)$ and $B_{\Xi_n}(G^0_{v_n}, R)\arr B_{\Gammatilde_\xi}([e,\xi], R)$.
\end{prop}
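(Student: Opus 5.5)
The plan is to deduce the statement from the Chabauty convergences $G_{v_n}\to G_\xi$ and $G^0_{v_n}\to G^0_\xi$ in $\sub(G)$, which follow from \eqref{eq:stabilizerslimit}. We then use the fact that $H\mapsto(\Gamma_{G/H},H)$ is a homeomorphic embedding into the space of rooted labelled graphs. Rather than arguing only topologically, I will make the statement about covering maps explicit. The key observation is that a covering map of $S$-labelled graphs $p\colon\Gamma_{G/K}\to\Gamma_{G/H}$ (for $K\le H$) restricts to an isomorphism of $R$-balls around the base cosets as soon as $K$ and $H$ contain the same elements of word length at most $2R+1$.

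First, I will justify this criterion. Vertices of $B_{\Gamma_{G/H}}(H,R)$ are cosets $gH$ with $|g|\le R$. Two such vertices $gH,g'H$ coincide iff $g^{-1}g'\in H$, and $|g^{-1}g'|\le 2R$. Edges are determined by labels, and an edge $gH\to sgH$ stays inside the ball structure with $|g^{-1}s^{-1}g'|\le 2R+1$. Hence if $H\cap B_G(2R+1)=K\cap B_G(2R+1)$, the surjective label-preserving map $gK\mapsto gH$ from $B(K,R)$ to $B(H,R)$ is also injective on vertices, and it preserves edges in both directions. So it is an isomorphism of rooted labelled graphs.

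Second, I will check the hypothesis in both cases. Fix $R$; the ball $B_G(2R+1)$ is finite since $S$ is finite. For the first map, take $K=G_\xi\le H=G_{v_n}$. Since $G_{v_n}$ is decreasing with intersection $G_\xi$, each of the finitely many $g\in B_G(2R+1)\setminus G_\xi$ lies outside $G_{v_n}$ for $n$ large. Hence $G_{v_n}\cap B_G(2R+1)=G_\xi\cap B_G(2R+1)$ for $n\ge n_1$. For the second map, take $K=G^0_{v_n}\le H=G^0_\xi$. Since $G^0_{v_n}$ is increasing with union $G^0_\xi$, each of the finitely many $g\in G^0_\xi\cap B_G(2R+1)$ lies in $G^0_{v_n}$ for $n$ large. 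This gives equality of the intersections for $n\ge n_2$.

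Taking $n\ge\max(n_1,n_2)$, and noting that the covering maps $\Gamma_\xi\to\Gamma_n$ and $\Xi_n\to\Gammatilde_\xi$ send base points $\xi\mapsto v_n$ and $G^0_{v_n}\mapsto[e,\xi]$, the criterion from the first step yields both isomorphisms. There is no real obstacle here. The only care needed is the bookkeeping in the first step, namely using radius $2R+1$ rather than $R$ so that both injectivity on vertices and the edge structure are controlled.
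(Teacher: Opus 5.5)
Your proof is correct and follows the same route as the paper, which deduces the proposition directly from the Chabauty convergences $G_{v_n}\to G_\xi$ and $G^0_{v_n}\to G^0_\xi$ coming from \eqref{eq:stabilizerslimit}. Your radius-$(2R+1)$ criterion just makes explicit why this convergence forces the covering maps themselves, and not merely some abstract isomorphism, to restrict to isomorphisms of $R$-balls.
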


 When a sequence of vertices $(v_n)$ converges to a boundary point, but not necessarily along a ray, we still have the following semi-continuity.
\begin{lemma}\label{l-semi-continuous}
    Let $G\le \aut(T)$. Let $(v_n)\subset T$ be any sequence of vertices converging to $\xi\in \partial T$ (in the topology of $T\cup \partial T$), and suppose that $G^0_{v_n}$ converges to $H\in \sub(G)$. Then $G^0_\xi\le H\le G_\xi$. 
\end{lemma}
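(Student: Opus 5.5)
We need to show $G^0_\xi\le H$ and $H\le G_\xi$. The plan is to use only the definition of the Chabauty topology: $G^0_{v_n}\to H$ means that for each $g\in G$, we have $g\in H$ if and only if $g\in G^0_{v_n}$ for all sufficiently large $n$. We also use that $v_n\to\xi$ in $T\cup\partial T$, which means the following: for every $m$, if $u_m$ denotes the vertex of level $m$ on the ray $\xi$, then $v_n\in T_{u_m}$ for all large $n$, and in particular $|v_n|\to\infty$.

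\emph{First inclusion.} Let $g\in G^0_\xi$. By \eqref{eq:stabilizerslimit} there is $m$ with $g\in G^0_{u_m}$, so $g$ fixes $T_{u_m}$ pointwise. For $n$ large we have $v_n\in T_{u_m}$, hence $T_{v_n}\subset T_{u_m}$. Thus $g$ fixes $v_n$ and acts trivially on $T_{v_n}$, i.e.\ $g\in G^0_{v_n}$ for all large $n$. Chabauty convergence then gives $g\in H$.

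\emph{Second inclusion.} Let $h\in H$. Then $h\in G^0_{v_n}$ for all large $n$, so $h$ fixes $v_n$. Since $h$ is a tree automorphism fixing the root, it fixes the whole geodesic from the root to $v_n$. Fix $m$. For large $n$, $v_n\in T_{u_m}$ and $|v_n|\ge m$, so this geodesic passes through $u_m$, and hence $h$ fixes $u_m$. As $m$ is arbitrary, $h\in\bigcap_m G_{u_m}=G_\xi$, again by \eqref{eq:stabilizerslimit}.

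There is no real obstacle. The only point needing care is to use the correct direction of Chabauty convergence in each inclusion: "eventually in $G^0_{v_n}$ implies in $H$" for the first, and "in $H$ implies eventually in $G^0_{v_n}$" for the second. Both directions hold because membership of a single element is a basic clopen condition in $\{0,1\}^G$.
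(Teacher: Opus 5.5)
Your proof is correct and follows the same argument as the paper's. Elements of $G^0_\xi$ lie in $G^0_{v_n}$ for all large $n$ and hence in $H$, while elements of $H$ eventually fix $v_n$ and therefore fix $\xi$; you simply spell out the Chabauty and tree-topology details that the paper treats as obvious.
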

\begin{proof}
It is obvious that every element $g\in G^0_\xi$ satisfies $g\in G^0_{v_n}$ for all large enough $n$, hence $g\in H$. Any $g\in H$ satisfies $g(v_n)=v_n$ for all large enough $n$, hence $g(\xi)=\xi$. \qedhere

\end{proof}

 The \emph{rigid stabilizer} of $v\in T$ is the subgroup $\rist(v)$ of the stabilizer $G_v$ consisting of elements that fix every vertex of $T\setminus T_v$. The $n$th level rigid stabilizer is the subgroup $\rist(n)=\langle \rist(v) \colon |v|=n\rangle\simeq \prod_{|v|=n}\rist (v)$. 

  Let us recall the classical ``double commutator lemma'' in the special case of groups acting on rooted trees, see Grigorchuk \cite{Gri-branch}. We write $G'$ for the commutator subgroup of a group $G$. 

 \begin{lemma}  \label{l-double-comm}
     Let $G\le \aut(T)$. Let $g\in G$ and $v\in T$ be such that $g(v)\neq v$. Then the normal closure of $g$ in $G$ contains $\rist(v)'$. 
     In particular, if $G$ is level-transitive, then every non-trivial normal subgroup of $G$ contains $\rist(n)'$ for some $n$. 
 \end{lemma}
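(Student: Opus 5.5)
The plan is to use commutators with elements of the rigid stabilizer directly. Let $N$ be the normal closure of $g$ in $G$ and set $w=g(v)\neq v$. Since $g$ fixes the root and $g(v)\ne v$ with $|g(v)|=|v|$, the vertices $v$ and $w$ lie on the same level and are distinct, so the subtrees $T_v$ and $T_w$ are disjoint.

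For $h\in \rist(v)$, consider $[g,h]=g h g^{-1} h^{-1}$, which lies in $N$. The conjugate $ghg^{-1}$ is supported on $T_{w}$, i.e. it lies in $\rist(w)$, and so it commutes with every element of $\rist(v)$. Now take $h_1,h_2\in\rist(v)$ and put $a=ghg^{-1}$ with $h=h_1$. Since $a$ commutes with $h_2$ and $h_1$ commutes with $ghg^{-1}$ after restriction to the support, I will compute $[[g,h_1],h_2]$. We have $[g,h_1]=a\,h_1^{-1}$ with $a\in\rist(w)$. Since $a$ commutes with $h_2$, a direct computation gives $[a h_1^{-1},h_2]=a h_1^{-1}h_2 h_1 a^{-1} h_2^{-1}$. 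Moving $a$ past elements of $\rist(v)$, this equals $h_1^{-1}h_2h_1h_2^{-1}=[h_1^{-1},h_2]$. As $N$ is normal, $[[g,h_1],h_2]\in N$, so $[h_1^{-1},h_2]\in N$ for all $h_1,h_2\in\rist(v)$. These commutators generate $\rist(v)'$, hence $\rist(v)'\le N$. The only care needed here is the bookkeeping of the commutator convention, which is routine.

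For the second statement, let $N\ne 1$ be normal and pick $1\neq g\in N$. Since $g$ is a nontrivial automorphism, it moves some vertex $u$; let $n=|u|$. By the first part, $\rist(u)'\le N$. Level-transitivity gives, for every $u'\in T^{(n)}$, some $k\in G$ with $k(u)=u'$. Then $k\,\rist(u)\,k^{-1}=\rist(u')$, so $\rist(u')'=k\rist(u)'k^{-1}\le N$.

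Finally, $\rist(n)\simeq\prod_{|u'|=n}\rist(u')$ is a direct product of commuting factors, so its derived subgroup is $\prod\rist(u')'$, generated by the $\rist(u')'$. Hence $\rist(n)'\le N$. There is no real obstacle beyond the commutator identity above.
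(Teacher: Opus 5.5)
Your proof is correct. The paper gives no proof of this lemma and cites Grigorchuk, and your argument is the classical double-commutator one: $ghg^{-1}\in\rist(g(v))$ commutes with $\rist(v)$, so $[[g,h_1],h_2]=[h_1^{-1},h_2]\in N$, and level-transitivity together with $\rist(n)'=\prod_{|u|=n}\rist(u)'$ gives the second claim. Your computation only uses that $a$ commutes with $h_1^{-1}h_2h_1\in\rist(v)$, so the garbled remark about $h_1$ commuting with $ghg^{-1}$ can simply be dropped.
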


 \begin{defin}
     
 A subgroup $G\le \aut(T)$  is \emph{branch} if it is level-transitive and $\rist(n)$ has finite index in $G$ for every $n$. 

  \end{defin}

Note that in a finitely generated branch group $G$, all the subgroups $
\rist(v)$ are finitely generated (as they are direct factors of a finite index subgroup of $G$). 

For branch groups,  Lemma \ref{l-double-comm} has the following consequence. (Recall that a group $G$ is \emph{just-infinite} if every proper quotient of $G$ is finite.)

\begin{prop}[\cite{Gri-branch}] \label{p-Grigorchuk}
Let $G\le \aut(T)$ be a branch group. Then every proper quotient of $G$ is virtually abelian. 
Furthermore, $G$ is just-infinite if and only if for every $v\in T$ the commutator subgroup $\rist(v)'$ has finite index in $\rist(v)$. 
\end{prop}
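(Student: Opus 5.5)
For the first assertion, let $G\le\aut(T)$ be branch and $N\unlhd G$ a nontrivial normal subgroup. The plan is to show that $G/N$ is virtually abelian using Lemma~\ref{l-double-comm}. Since $G$ is level-transitive, that lemma gives $n$ with $\rist(n)'\le N$. So $G/N$ is a quotient of $G/\rist(n)'$, and it suffices to show $G/\rist(n)'$ is virtually abelian. The subgroup $\rist(n)/\rist(n)'$ is abelian, because it is the abelianization of $\rist(n)$. It has finite index in $G/\rist(n)'$ because $\rist(n)$ has finite index in $G$ by the branch hypothesis. This settles the first part, and no finiteness assumption on generation is needed.

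For the equivalence, first assume $G$ is just-infinite and fix $v\in T$ with $|v|=n$. Note that $\rist(n)'=\prod_{|w|=n}\rist(w)'$, since $\rist(n)$ is the direct product of the $\rist(w)$. The subgroup $\rist(n)'$ is characteristic in $\rist(n)$, which is normal in $G$, so $\rist(n)'$ is normal in $G$. We need it to be nontrivial. If $\rist(n)'$ were trivial, then $\rist(n)$ would be an abelian finite-index subgroup, making $G$ virtually abelian. I would rule this out via the standard fact that a branch group is not virtually abelian; the argument is sketched below. Granting this, just-infiniteness gives $[G:\rist(n)']<\infty$. Hence $[\rist(n):\rist(n)']<\infty$, and projecting onto the direct factor $\rist(v)$ yields $[\rist(v):\rist(v)']<\infty$.

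Conversely, assume $[\rist(v):\rist(v)']<\infty$ for every $v$, and let $N\ne1$ be normal in $G$. Lemma~\ref{l-double-comm} gives $N\ge\rist(n)'=\prod_{|w|=n}\rist(w)'$. This product has finite index in $\rist(n)$, because $T^{(n)}$ is finite and each factor has finite index. Since $\rist(n)$ has finite index in $G$, so does $N$. Thus every proper quotient of $G$ is finite, i.e.\ $G$ is just-infinite. (We may take $G$ infinite here: level-transitivity on an infinite spherically homogeneous tree with branching forces infinitely many levels of growing size.)

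The main obstacle I expect is the non-triviality of $\rist(n)'$ in the forward direction, i.e.\ that a branch group is not virtually abelian. I would argue as follows. Each $\rist(w)$ is nontrivial: otherwise, by level-transitivity, $\rist(n)$ would be trivial and hence of infinite index. By transitivity, $\rist(n)$ contains a product of infinitely many commuting nontrivial factors across deeper levels. Now suppose $A\le G$ is abelian of finite index. For large $m$, $A\cap\rist(w)$ is nontrivial for each $|w|=m$, and $A$ acts on $T^{(m)}$ with boundedly many orbits. A nontrivial element of $\rist(w)$ moves some vertex $u$ below $w$, and applying Lemma~\ref{l-double-comm} inside $\rist(w)$ shows that $\rist(u)$ is non-abelian for suitable $u$. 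Arranging this at a vertex whose $A$-orbit is small then contradicts $A$ being abelian of finite index.
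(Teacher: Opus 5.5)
Your proof is correct and is the standard derivation from Lemma~\ref{l-double-comm} that the paper has in mind; the paper states the result with a citation to Grigorchuk and gives no proof. Your last paragraph is vague as written and can be replaced by something much shorter: you only need $\rist(n)$ to be non-abelian. For this, take $1\neq g\in\rist(w)$ with $|w|=n$, moving some $u\in T_w$, and $1\neq h\in\rist(u)$; since $ghg^{-1}\in\rist(g(u))$ and $h$ have disjoint supports, $[g,h]\neq 1$, so the orbit-counting argument for a finite-index abelian subgroup $A$ is not needed.
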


The following lemma is a slightly more precise version of Proposition \ref{p-Grigorchuk}, that also applies to normal subgroups of the rigid stabilizers $\rist(v)$. (The latter are not necessarily branch, because they may not act level-transitively on $\partial T_v$.) 

\begin{lemma} \label{l-quotient-rist}
    Let $G\le \aut(T)$ be a branch group, and fix $v\in T$. Then for every normal subgroup $N\unlhd \rist(v)$, there is $n\ge 1$ such that for every  $w\in T^{(n)}_v$, either $N$ contains $\rist(w)'$, or $N$ fixes $w$ and acts trivially on $T_w$.

    In particular $\rist(v)/N$ is virtually abelian if and only if $N$ contains $\rist(w)'$ for some $n\ge 1$ and all $w\in T^{(n)}_v$. 
\end{lemma}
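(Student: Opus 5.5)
The plan is to apply the double commutator lemma (Lemma~\ref{l-double-comm}) inside the subtree $T_v$, and to use the branch hypothesis to move elements around within $T_v$. Let $N\unlhd \rist(v)$. Call a vertex $w\in T_v$ \emph{moved by $N$} if some $g\in N$ satisfies $g(w)\neq w$.

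Step 1: if $w$ is moved by $N$, then $N\supseteq \rist(w)'$. Pick $g\in N$ with $g(w)\ne w$. The proof of Lemma~\ref{l-double-comm} only uses conjugation by elements of $\rist(w)$. For $a,b\in\rist(w)$, the elements $a$ and $gag^{-1}$ have disjoint supports, since $g(w)$ and $w$ are incomparable vertices at the same level. Hence $[a,g]\in N$, because $N$ is normal in $\rist(v)$ and $a\in\rist(w)\le\rist(v)$. Then $[[a,g],b]=[a,b]$, so $[a,b]\in N$. Thus $N\supseteq\rist(w)'$, and the same then holds for every descendant of $w$, since $\rist(u)\le\rist(w)$ whenever $u\in T_w$.

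Step 2: choose the level. Suppose $N$ fixes a vertex $w$ but acts nontrivially on $T_w$. Then some descendant of $w$ is moved. If a vertex $u$ is moved, Step 1 gives $\rist(u)'\le N$ and, by restriction, a commutator description of the descendants. What we need is a uniform level $n$ such that every $w\in T_v^{(n)}$ is either moved or has $N$ acting trivially on $T_w$.

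Approach: let $M$ be the set of minimal moved vertices in $T_v$. Any two vertices of $M$ are incomparable. The group $\rist(v)$ permutes moved vertices, because $N$ is normal, so it permutes $M$.

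Key claim: $M$ is finite. Since $G$ is branch, $\rist(n_0)$ has finite index in $G$ for each $n_0$. I would use that $\rist(v)\cdot G_v^0\cdot\rist(n)$ contains $\rist(n)$ with finite index, to argue that $G_v$ has finitely many orbits on $M$ once $M$ is known to be contained in boundedly many levels. Alternatively, and more simply: take $n$ to be the maximal level of a vertex of $M$. Then every $w\in T_v^{(n)}$ either lies below some vertex of $M$, in which case $\rist(w)'\le N$ by Step 1, or has no moved vertex weakly above it. In the latter case $N$ fixes $w$, and $N$ must also act trivially on $T_w$, since any vertex of $T_w$ moved by $N$ would produce an element of $M$ comparable to $w$.

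The obstacle is bounding the levels of $M$, and this is where normality in $\rist(v)$ and the branch property enter. For $w\in M$, the group $G_v$ normalizes $\rist(v)$, and conjugating $N$ by elements of $G_v$ gives normal subgroups of $\rist(v)$. Since $[G_v:\rist(v)\cdot\rist(n)\cap G_v]<\infty$, I would try to show that the $\rist(v)$-orbits in each level are finitely many and comparable across levels. If a direct finiteness argument fails, I would replace $N$ by its normal closure and intersect it with the finitely many $G_v$-conjugates $N^{g}$, $g\in G_v/\rist(v)$-representatives modulo a finite-index subgroup, so as to reduce to a $G_v$-invariant situation. In that situation $M$ is $G_v$-invariant, and level-transitivity of $G_v$ on each level of $T_v$ (which follows from $G$ being level-transitive) forces $M$ to be an entire level.

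Finally, the "in particular" statement follows. If $N\supseteq\rist(w)'$ for all $w$ at a given level, then $\rist(v)/N$ is a quotient of a finite-index subgroup of $\rist(v)$ by an abelian-quotient-containing subgroup, hence virtually abelian. Conversely, if some $w$ at every level had $N$ trivial on $T_w$, then $\rist(w)$ would embed in $\rist(v)/N$. But $\rist(w)$ is not virtually abelian, since it contains $\rist(u_1)\times\rist(u_2)\times\cdots$ with nonabelian factors (branch groups have no abelian rigid stabilizers). This contradiction proves the equivalence.
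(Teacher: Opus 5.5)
Your Step 1, the double commutator argument run inside $\rist(v)$, is correct. So is the final choice of $n$ as the maximal level of $M$; this is also how the paper concludes. But everything rests on the key claim that the set $M$ of minimal vertices moved by $N$ is finite, and this is never proved. Equivalently, the set $C\subset\partial T_v$ of ends fixed by $N$ must be open, since $\partial T_v\setminus C=\bigsqcup_{w\in M}\partial T_w$.

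Your first suggestion is circular, because it presupposes that $M$ lies in boundedly many levels. The fallback is not a valid reduction either:
\begin{itemize}
\item The normal closure of $N$ in $G_v$ is larger than $N$, so information about it says nothing about $N$.
\item The intersection $N_0\le N$ of the finitely many $G_v$-conjugates of $N$ can be trivial. For instance, let $O\subsetneq T_v^{(m)}$ be a proper $\rist(v)$-orbit. Then $N=\prod_{u\in O}\rist(u)$ is normal in $\rist(v)$, while $gNg^{-1}\cap N=\{1\}$ for any $g\in G_v$ with $g(O)\neq O$.
\end{itemize}
More fundamentally, if $N_0\neq\{1\}$ you only recover the first alternative on a whole level. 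If $N\neq\{1\}$ but $N_0=\{1\}$, you get nothing, and this is exactly the case where the second alternative must occur. (If $\rist(w)'\le N$ held for all $w$ on some level, then the nontrivial $G_v$-invariant group $\prod_w\rist(w)'$ would lie in $N_0$.) So the reduction discards precisely the situation the lemma is about, namely $\rist(v)$ not acting level-transitively on $T_v$.

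The missing ingredient is a uniform bound on the number of $\rist(v)$-orbits on the levels of $T_v$.
\begin{itemize}
\item The subgroup $\rist(|v|)\le G_v$ has finite index $k$ and acts on $T_v$ through $\rist(v)$. Since $G_v$ is transitive on each $T_v^{(m)}$, $\rist(v)$ has at most $k$ orbits on each $T_v^{(m)}$.
\item The parent map is equivariant and surjective, so this number is non-decreasing in $m$. Hence it stabilizes from some level $m_0$ on. After that, the level-$m$ descendants of any $\rist(v)$-orbit at level $m_0$ form a single orbit.
\item Now $M$ is a $\rist(v)$-invariant antichain. Below each of the finitely many orbits at level $m_0$, it can only consist of one full orbit at a single level. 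Together with the finitely many vertices above level $m_0$, this shows $M$ is finite.
\end{itemize}
The paper phrases this as a partition of $\partial T_v$ into finitely many minimal clopen $\rist(v)$-invariant sets. The closed invariant set $C$ is then a union of some of them, hence clopen, and compactness of $\partial T_v\setminus C$ bounds the levels. Your treatment of the final equivalence is fine, provided you justify that each $\rist(u)$ is nonabelian. You also need that a group containing direct products of arbitrarily many nonabelian factors cannot be virtually abelian.
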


\begin{proof}
First we observe that $\partial T_v$ can be partitioned into finitely many minimal, clopen $\rist(v)$-invariant subsets, since $\rist(v)$ has finite index in the image of the action of $G_v$ on $\partial T_v$, which acts minimally on $\partial T_v$. Now let $N\unlhd \rist(v)$ and consider the set $C\subset \partial T_v$ of $N$-fixed points. It is closed and $\rist(v)$-invariant, hence both $C$ and $\partial T_v\setminus C$ are clopen. 

Every $\xi\in \partial T_v\setminus C$ admits a prefix $w$ such that $f(w)\neq w$ for some $f\in N$. It follows from Lemma \ref{l-double-comm} that there is a set $Q\subset T_v$  such that $\partial T_v\setminus C=\cup_{w\in Q} \partial T_w$ and $\rist(w)'\le N$ for $w\in Q$. By compactness we can extract a finite cover and then refine it so that $ Q\subset T^{(n)}_v$ for some $n$. Then for every $w\in T^{(n)}_v\setminus Q$, we have $\partial T_w\subset C$, so $N$ fixes $w$ and acts trivially on $T_w$.

If $Q=T^{(n)}_v$, then $N$ contains $\rist(w)'$ for every $w\in T^{(n)}_v$ and $\rist(v)/N$ is virtually abelian. Else, $\rist(w)$ maps injectively to $\rist(v)/N$ for every $w\notin Q$, and thus $\rist(v)/N$ cannot be virtually abelian: indeed every normal subgroup of $\rist(w)$ contains $\rist(u)'$ for some $u$ by Lemma \ref{l-double-comm}, and it is easy to see that $\rist(u)'$ is never abelian---a fortiori $\rist(w)$ is not virtually abelian.  \qedhere
    
\end{proof}

\subsection{Commensurating actions of branch groups} \label{s-commensurating-branch}

\begin{thm} \label{t-multi-ended-graphs}
Let $G\le \aut(T)$ be a finitely generated branch group whose action on $\partial T$ has finite groups of germs.
Let $H\le G$ be a subgroup such that the action of $G$ on $G/H$ is faithful and $\esf(\Gamma_{G/H})\ge 2$. Then there is $\xi \in \partial T$ such that $H$ is a finite index subgroup of $G_\xi$. Furthermore, we have $\esf(\Gammatilde_\xi)\ge \esf(\Gamma_{G/H})$, and for every commensurated subset $A\subset \Gamma_{G/H}$, there is a commensurated subset $\widetilde{A}\subset \Gammatilde_
\xi$ such that $\nu_A-\alpha\nu_{\widetilde{A}}\in \ell^\infty(G)$, where $\alpha=\frac{[G^0_\xi\,  :\,  G^0_\xi\cap H]}{[H\, : \, G^0_\xi\cap H]}$.

\end{thm}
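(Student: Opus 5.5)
We will use Theorem \ref{t-commensurating-product-discrete} for the direct products $\rist(n)\simeq\prod_{|v|=n}\rist(v)$ to localize $H$ near a boundary point. Let $A\subset X=G/H$ be commensurated and not transfixed, and fix a level $n$. Since $\rist(n)$ has finite index in $G$, Lemma \ref{l-reduction-transitive} shows that $\rist(n)$ has finitely many orbits on $X$, each meeting $A$ and $A^c$ in infinite sets. On each orbit $Y$ the action of $\rist(n)$ is transitive and multi-ended, because $A\cap Y$ is commensurated and not transfixed. We group the factors as $\rist(v)\times\prod_{w\neq v}\rist(w)$ and apply Theorem \ref{t-commensurating-product-discrete}: either one of the two factors acts on $Y$ through a finite group, or $\rist(n)$ acts on $Y$ through a virtually abelian quotient.

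We first exclude the virtually abelian case. If $\rist(n)$ acted on every orbit through a virtually abelian quotient, its kernel on $X$ would contain some $\rist(m)'$ by Lemma \ref{l-quotient-rist}, contradicting faithfulness. Faithfulness, together with the fact that each $\rist(v)$ has no finite-index subgroup acting trivially, should instead force the following picture on each orbit $Y$: exactly one vertex $v_n(Y)\in T^{(n)}$ has $\rist(v_n)$ acting on $Y$ with infinite image, and $\prod_{w\neq v_n}\rist(w)$ acts with finite image. Iterating the splitting inside $\rist(v_n)$ and varying $n$ gives a coherent choice of vertices. Using the $G$-equivariance of the construction, we take the $\rist$-orbit of the base coset $H$ at every level $n$ and obtain a ray $\xi=(v_n)$. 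The idea is that $\prod_{w\neq v_n}\rist(w)$ moves $H$ within a finite set, so $H\cap \prod_{w\neq v_n}\rist(w)$ has finite index in that product, while $H$ should stabilize $v_n$ up to finite index.

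\textbf{Main obstacle.} The hardest step is to show $H\le G_\xi$ with finite index. For the inclusion we want $H$ to fix $v_n$ for every $n$. Elements $h\in H$ conjugate $\rist(v_n)$ to $\rist(h v_n)$, and the ``active'' vertex attached to the base point is canonical, so $h v_n=v_n$. For finite index, we note that $H\cap G_{v_n}$ contains a finite-index subgroup of $\prod_{w\ne v_n}\rist(w)$. Since the action is faithful and $\rist(v_n)$ has an infinite orbit through $H$, we compare $H$ with $G_\xi$ via germs: $H\cap G^0_\xi$ has finite index in $H$ because the groups of germs are finite. We then need $[G^0_\xi : G^0_\xi\cap H]<\infty$. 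For this we would show that $G^0_{v_n}\cap\rist(n)$ is almost contained in $H$ uniformly in $n$, using that the finite orbits of the complementary products have bounded size; that uniform bound is exactly where the difficulty lies. Once $[G^0_\xi:G^0_\xi\cap H]$ and $[H:G^0_\xi\cap H]$ are both finite, $H$ is commensurable with $G_\xi$, and $H\le G_\xi$ then gives finite index.

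For the final statements we set $K=G^0_\xi\cap H$. Lemma \ref{lem:endsofcoverings} applied to $K\le H$ lifts $A$ to $p^{-1}(A)\subset G/K$ with $\nu=[H:K]\nu_A$. We then push forward along $G/K\to G/G^0_\xi$, a covering of degree $[G^0_\xi:K]$, by setting $\widetilde A$ to be the set of cosets whose fibre lies mostly in $p^{-1}(A)$. Because the deck action is by finitely many $G$-commuting permutations, $p^{-1}(A)$ agrees up to finitely many points with the full preimage of $\widetilde A$. Lemma \ref{lem:endsofcoverings} then gives $[H:K]\nu_A=[G^0_\xi:K]\nu_{\widetilde A}+O(1)$, which yields the constant $\alpha$. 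Applying the same lifting to the components at infinity gives $\esf(\Gammatilde_\xi)\ge\esf(\Gamma_{G/H})$.
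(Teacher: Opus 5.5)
Your first step matches the paper's first key step (Lemma \ref{l-commensurating-branch}). You split $\rist(n)=\rist(v)\times\prod_{w\neq v}\rist(w)$, apply Theorem \ref{t-commensurating-product-discrete}, exclude the virtually abelian case via Lemma \ref{l-quotient-rist} and faithfulness, and use equivariance. This gives a ray $\xi$ with $\bigoplus_{v\in N(\xi)}K_v\le H\le G_\xi$ for finite-index $K_v\le\rist(v)$. After that, however, the finite-index claim is left open, and it cannot be extracted from this first step. Take $H=\bigoplus_{v\in N(\xi)}K_v$ with $K_v\neq\rist(v)$ for infinitely many $v$. It satisfies everything you have established, yet has infinite index in $G^0_\xi$, since $G^0_\xi\supseteq\bigoplus_{v\in N(\xi)}\rist(v)$. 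So no uniform bound on the orbits of the complementary products follows; the multi-endedness of $\Gamma_{G/H}$ must be used a second time, and your proposal does not say how.

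Your last paragraph also contains a genuine error. Lemma \ref{lem:endsofcoverings} only says that a finite cover has \emph{at least} as many ends as its base. That is the wrong direction for $\esf(\Gammatilde_\xi)\ge\esf(\Gamma_{G/H})$. Moreover, it is false in general that a commensurated subset of a finite cover agrees, up to finitely many points, with a full preimage. For suitable $\xi$, the four-ended graph of germs of the Grigorchuk group is a finite Galois cover of its one-ended orbital graph, so commuting deck transformations give no such control.

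The missing idea is the paper's second key step, Lemma \ref{l-ends-injection}. After replacing $H$ by $H\cap G^0_\xi$, consider the covering $p\colon\Gamma_{G/H}\to\Gammatilde_\xi$. If two infinite components of $\Gamma_{G/H}\setminus\Sigma$ have projections containing a common infinite component $Q$ of $\Gammatilde_\xi\setminus p(\Sigma)$, then the two components coincide. The proof uses that $G^0_w$ is finitely generated. Hence $\langle K_v\colon v\in N(\xi),|v|\le|w|\rangle\le H$ contains a finite-index subgroup $R_0$ characteristic in $G^0_w$, so $R_0$ is normal in $G_w$. One then transports the two lifts along $Q$ to a point of the coarsely dense set $G_w/G^0_\xi$ far from $p(\Sigma)$. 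There, a bounded correction by a representative of $G^0_w/R_0$ joins them outside $\Sigma$.

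This lemma, with Lemma \ref{l-fiber-disconnected} ensuring that infinite components have nonempty projection off $p(\Sigma)$, gives three things. It gives the inequality on ends. It shows that $A$ is, up to finitely many points, the preimage of a union of components of $\Gammatilde_\xi\setminus p(\Sigma)$. It also gives finite index: if $p$ had infinite degree, a finite path through all infinite components of $\Gammatilde_\xi\setminus p(\Sigma)$ would lift while avoiding $\Sigma$ and the finitely many finite components. Then a single infinite component of $\Gamma_{G/H}\setminus\Sigma$ would cover all of them, contradicting the lemma.
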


\begin{proof}
Fix a finite symmetric generating set $S$ of $G$, and let  $H\le G$ be a subgroup as in the statement.

Given a geodesic ray $\xi\in \partial T$ in a rooted tree $T$, we write $N(\xi)\subset T$ for the collection of all vertices that are not on $\xi$, but whose parent is in $\xi$.   Note that we have	\[\partial T\setminus \{\xi\}=\bigsqcup_{v\in N(\xi)} \partial T_v.\] 
The starting point of the proof is the following application of Theorem \ref{t-commensurating-product-discrete} (which does not require the assumption that groups of germs are finite).

\begin{lemma}[First key step] \label{l-commensurating-branch}
There is $\xi\in \partial T$ and a finite index subgroup $K_v \le \rist(v)$ for each  $v\in  N(\xi)$ such that 
	\begin{equation}\label{e-subgroup} \bigoplus_{v\in N(\xi)} K_v \le H \le G_{\xi}.\end{equation}\end{lemma}
Here and throughout, we identify $\bigoplus_{v\in N(\xi)} K_v$ with the subgroup $\langle K_v, v\in N(\xi)\rangle$.
\begin{proof}[Proof of Lemma \ref{l-commensurating-branch}]
Let us set $X=G/H$, and let $A\subset X$ be a commensurated subset which is not transfixed. Fix $x\in X$, and let $n\ge 1$. Obviously the action of $\rist(n)$ on $X$ must have infinite image, since otherwise the action of $G$ would have finite image and thus $X$ would be finite. By Lemma \ref{l-reduction-transitive}, the action of $\rist(n)\simeq \prod_{v\in T^{(n)}}\rist(v)$ on $X$ has finitely many orbits, and for each such orbit $Y$ the sets $Y\cap A$ and $Y\cap A^c$ are both infinite.  Let $Y_n$ be the $\rist(n)$-orbit of $x$; it is infinite by Lemma \ref{l-reduction-transitive}. Hence there is at least one $v\in T^{(n)}$ such that $\rist(v)\acts Y_n$ has infinite image. Suppose, by contradiction, that there are two such distinct $v, w\in T^{(n)}$. Then we can write $\rist(n)=G_1\times G_2$, where $G_i\acts Y_n$ has infinite image for $i=1, 2$. Then  Theorem \ref{t-commensurating-product-discrete} implies that the action $\rist(n)\acts Y_n$ has virtually abelian image, and Lemma \ref{l-quotient-rist} implies that there is $m\ge n$ such that $\rist(m)'$ fixes every point of $Y_n$. But since $\rist(m)'$ is normal in $G$, and $G$ acts transitively on $X$, this implies that $\rist(m)'$ actually fixes every point of $X$, contradicting that $G$ acts faithfully on $X$.  Therefore, for each $n$, there exists a unique $v_n\in T^{(n)}$ such that the action of $\rist(v_n)$ on $Y_n$ has infinite image. The sequence $(v_n)$ forms a ray $\xi_x\in \partial T$. The map $x\mapsto \xi_x$ is equivariant, so that $G_x\le G_{\xi_x}$. In addition, for every $v\in N(\xi)$ at level $n$, we have $v\neq v_n$, so there is a finite index subgroup $K_v\le \rist(v)$ that acts trivially on $Y_{n}$, and thus fixes $x$. It follows that $\bigoplus_{v\in N(\xi_x)} K_v\le G_x\le G_{\xi_x}$. Choosing $x=eH$ we get the desired conclusion. \qedhere
\end{proof}

From now on, we let $\xi=(v_n)\in \partial T$ and $(K_v)_{v\in N(\xi)}$ be given by Lemma \ref{l-commensurating-branch}. We set $K=\bigoplus_{v\in N(\xi)} K_v$, so that \eqref{e-subgroup} reads $K\le H\le G_\xi$. In the reminder of the proof we assume that the group of germs $G_\xi /G^0_\xi$ is finite, and progressively strengthen the conclusion of Lemma \ref{l-commensurating-branch} until reaching the conclusion of the theorem.

For each $v\in N(\xi)$, denote the set of maps $\{g|_{T_v}\colon T_v\to T_{g(v)} \,  \vert \, g\in G\}$ by $E_v$.
The group $K_v$ acts on $E_v$ from the right by precomposition, and the condition that $K_v$ has finite index in $\rist(v)$ implies that the quotient $E_v/K_v$ is a finite set. Then for every $g\in G$, the data 
\[(g|_{T_v}\text{ mod }K_v)_{v\in N(\xi)}
\in \prod_{v\in N(\xi)} E_v/K_v\]
determines uniquely the coset $g K$, and thus the coset $gH$. We will repeatedly use this observation. 
 
Since $G^0_\xi$ has finite index in $G_\xi\ge H$, the group $H\cap G^0_\xi$ has finite index in $H$. Moreover Lemma \ref{lem:endsofcoverings} ensures that $\esf(\Gamma_{G/(H\cap G^0_\xi)})\ge \esf(\Gamma_{G/H})$  and that for every commensurated subset $A\subset{G/H}$, there is a commensurated subset $B\subset {G/(H\cap G^0_\xi)}$  such that $\nu_A=[H\colon H\cap G^0_\xi]^{-1}\nu_B$.  Hence, to prove the theorem, there is no loss of generality in replacing $H$ by $H \cap G^0_\xi$, i.e. we can suppose that $H\le G^0_\xi$ to begin with. After this preliminary reduction,  the map $gH\mapsto gG^0_\xi$ induces a covering map 

\[p\colon \Gamma_{G/H}\to  \widetilde{\Gamma}_\xi.\]

\begin{lemma}\label{l-fiber-disconnected}
Every connected infinite subset $U\subset \Gamma_{G/H}$ has an infinite projection $p(U)$.
\end{lemma}
\begin{proof}
Suppose that $U$ is connected and $p(U)$ is finite, and let us show that $U$ is finite. Without loss of generality, we may assume that $U$ contains the basepoint $eH$ of $\Gamma_{G/H}$ (upon enlarging $U$ by adding to it a path from the basepoint to any point in $U$). Let $R>0$ be such that $p(U)\subset B_{\Gammatilde_\xi}(eG^0_\xi, R)$. By Proposition~\ref{pr:convergenceofgraphs}, for all $m$ large enough the map $g\cdot v\mapsto [g, \xi]$ is an isomorphism $B_{\Xi_m}(G^0_v, R)\arr B_{\Gammatilde_\xi}(G^0_\xi, R)$, where $v$ is the prefix of length $m$ of $\xi$.

Suppose that $s_1, \ldots, s_n\in S$ is any sequence of generators such that the path $\gamma=(eH, s_1H, s_2s_1H, \ldots, s_n\cdots s_1H)$ is contained in $U$, and thus its projection $p(\gamma)$ is contained in $B_{\Gammatilde_\xi}(G^0_\xi, R)$. 
Then the preimage of $p(\gamma)$ in $B_{\Xi_m}(eG^0_v, R)$ is the path 
$G^0_v, s_1G^0_v, s_2s_1G^0_v , \ldots, s_n\cdots s_1G^0_v$. Since it stays inside $B_{\Xi_m}(G^0_v, R)$, the maps $s_i\cdots s_2s_1|_{T_v}$ belong to a finite set.  The value of $s_n\cdots s_1\vert_{T_v}$ determines uniquely the map $s_n\cdots s_1|_{T_w}$ for all $w\in T_v$, and hence for all but finitely many $w\in N(\xi)$. We deduce that  $(s_n\cdots s_1|_{T_w}\text{ mod }K_w)_{w\in N(\xi)}$ can take only finitely many values. Since the latter determines uniquely the endpoint of $\gamma$, and every point of $U$ can be connected to the trivial coset by a path inside $U$, this shows that $|U|<\infty$.  \qedhere
\end{proof}

Recall that given a graph $\Gamma$ and a subset $F$ of vertices, we denote by $\pi_0(\Gamma\setminus F)$ the set of connected components of $\Gamma\setminus F$, and by $\pi_0(\Gamma\setminus F)_\infty$ its subset consisting of infinite connected components. Fix a finite subset $\Sigma\subset\Gamma_{G/H}$ such that $|\pi_0(\Gamma_{G/H}\setminus\Sigma)_\infty|\ge 2$.  For $U\in \pi_0(\Gamma_{G/H}\setminus\Sigma)_\infty$, we set $p'(U):=p(U)\setminus p(\Sigma)$.

\begin{lemma}\label{l-projection-union-cc}

For every $U\in \pi_0(\Gamma_{G/H}\setminus\Sigma)_\infty$, $p'(U)$ is non-empty and it is a union of elements of $\pi_0(\Gammatilde_\xi\setminus p(\Sigma))$.
    
\end{lemma}
\begin{proof}
    The fact that $p'(U)$ is non-empty follows from Lemma \ref{l-fiber-disconnected}, since $p(\Sigma)$ is finite. Let $x$ be any point in $p'(U)$, and $Q\in \pi_0(\Gammatilde_\xi\setminus p(\Sigma))$ be its connected component. For any point $y\in Q$, we can find a path $\gamma\subset Q$ starting at $x$ and ending at $y$. The lift of $\gamma$ starting at any point in $p^{-1}(x)\cap U$ avoids $\Sigma$ (since it projects to $\gamma$ which avoids $p(\Sigma)$), hence it is entirely contained in $U$. Since its endpoint projects to $y$, it follows that $y\in p'(U)$. Since $y\in Q$ is arbitrary this shows that $Q\subset p'(U)$. We have shown that $p'(U)$ contains any component  $Q\in \pi_0(\Gammatilde_\xi\setminus p(\Sigma))$ that it intersects non-trivially, and so it is a union of such components. \qedhere

\end{proof}

\begin{lemma}[Second key step] \label{l-ends-injection}
    Let $U_1, U_2\in \pi_0(\Gamma_{G/H}\setminus\Sigma)_\infty$ be such that $p'(U_1)\cap p'(U_2)$ contains some infinite component $Q\in \pi_0(\Gammatilde_\xi\setminus p(\Sigma))_\infty$. Then $U_1=U_2$.
\end{lemma}
\begin{proof}
    Fix $xH\in U_1, yH\in U_2$ such that $p(xH)=p(yH)\in Q$. Then $xG^0_\xi=yG^0_\xi$ and so we can rewrite $y=xk$ where $k=x^{-1}y\in G^0_\xi$. We fix such $x$ and $k$ for the rest of the proof. Since $k\in G^0_\xi$, we can choose a prefix $w$ of $\xi$ such that $k\in G_w^0$. Consider the subgroup
\[R=\langle K_v \colon v\in N(\xi) \colon |v|\le |w|\rangle\le G_w^0.\]
(where the $K_v\leq \rist(v)$ are still the subgroups from \eqref{e-subgroup}). Then $R$ has finite index in $G_w^0$.  Recall that in a finitely generated branch group $G$, all rigid stabilizers $\rist(v)$ are finitely generated, and hence so are the groups $K_v$, hence the group $R$ is finitely generated. As a consequence, the group  $G_w^0$ is also finitely generated. Therefore, there is a finite index subgroup $R_0<R$ which is characteristic in $G_w^0$, and thus normal in the whole vertex stabilizer $G_w$.

Let $\Omega\subset G_w^0$ be a set of representatives of the classes of the finite quotient $G_w^0/R_0$. Set $C=\sup\{\ell_S(t), t\in \Omega\}$, where $\ell_S(\cdot)$ denotes the word length.

We have $G^0_\xi\le G_w$, and the set of cosets $G_w/G^0_\xi$ defines a subset of the set of vertices  of the graph of germs $\Gammatilde_\xi$. Since $G_w$ has finite index in $G$, there exists some $D>0$ such that every vertex of $\Gammatilde_\xi$ is at distance at most $D$ from $G_w/G^0_\xi$. On the other hand, since $Q$ is a connected component of $\Gammatilde_\xi \setminus p(\Sigma)$, the ball of radius $D$ around all but finitely many points of $Q$ is contained in $Q$. In particular, $Q$ has infinite intersection with $G_w/G^0_\xi$.  Therefore we can find a path $\gamma$ in $Q$ that starts at $xG_\xi^0=p(xH)=p(xkH)$ and ends at  a point  of $G_w/G^0_\xi$ at distance $>C$ from $p(\Sigma)$. Let $h=s_n\cdots s_1$ be the word read along this path, so that its endpoint is $zG_\xi^0$, where $z=hx\in G_w$. Then the two lifts of $\gamma$ starting at $xH$ and $xkH$ both avoid $\Sigma$, and thus are contained in $U_1$ and $U_2$, respectively. In particular $zH\in U_1$ and $zkH\in U_2$.  Furthermore both points $zH$ and $zkH$ are at distance $>C$ from $\Sigma$. Let $t\in \Omega$ be a representative of $zk^{-1}z^{-1}$ in $G_w^0/R_0$. Then there is a path in $\Gamma_{G/H}$ of length at most $C$ going from $zkH$ to $tzkH$. This path avoids $\Sigma$, which implies that $tzkH\in U_2$. We have $zkz^{-1}t\in R_0$, by the choice of $t$. Since $k, z\in G_w$, and $R_0$ is normal in $G_w$, this implies that $z^{-1}tzk\in R_0\le H$, so
\[tzkH=z\cdot z^{-1}tzkH=zH\in U_1.\]
We have shown that $tzkH\in U_1\cap U_2\neq \varnothing$,
which implies that $U_1=U_2$. \qedhere
    
\end{proof}

Lemma \ref{l-ends-injection} immediately implies that $|\pi_0(\Gammatilde_\xi\setminus p(\Sigma))_\infty|\ge |\pi_0(\Gamma_{G/H}\setminus \Sigma)_\infty|$,
hence $\esf(\Gammatilde_\xi)\ge\esf(\Gamma_{G/H})\ge 2$. Let us argue that it also implies that $H$ has finite index in $G^0_\xi$ (and hence in $G_\xi$). Suppose by contradiction that $p\colon \Gamma_{G/H}\to \Gammatilde_\xi$ has infinite degree. Let $\gamma$ be a finite path in $\Gammatilde_\xi$ that visits all components in $\pi_0(\Gammatilde_\xi\setminus p(\Sigma))_\infty$. If $p$ has infinite degree, the path $\gamma$ admits infinitely many lifts to $\Gamma_{G/H}$. Hence it admits a lift $\tilde{\gamma}$ which avoids $\Sigma$ as well as all the (finitely many) finite components in $\pi_0(\Gamma_{G/H}\setminus \Sigma)$.  Therefore $\tilde{\gamma}$ is contained in a single infinite component $U\in \pi_0(\Gamma_{G/H}\setminus \Sigma)_\infty$, which then has the property that $p'(U)$ contains all components in $\pi_0(\Gammatilde_\xi\setminus p(\Sigma))_\infty$. Therefore Lemma \ref{l-ends-injection} implies that $U$ is the unique infinite component of $\Gamma_{G/H}\setminus \Sigma$, which contradicts our choice of $\Sigma$.

Finally let $\nu_A$ be an unbounded cardinal-definite function associated to a commensurated subset $A\subset G/H$. Upon changing $A$ in its commensurability class (which changes $\nu_A$ by a bounded amount) we can find a finite subset $\Sigma\subset \Gamma_{G/H}$ such that $A$ is a union of elements of $\pi_0(\Gamma_{G/H}\setminus \Sigma)_
\infty$. Then Lemma \ref{l-ends-injection} implies that  $|A\triangle p^{-1}(p'(A))|<\infty$, thus $\nu_A-\nu_{p^{-1}(p'(A))}$ is bounded. But $\nu_{p^{-1}(p'(A))}=[G^0_\xi \colon H]\nu_{p'(A)}$ by Lemma \ref{lem:endsofcoverings}, finishing the proof. \qedhere

\end{proof}

\begin{cor}\label{c-FW}
Let $G\le \aut(T)$ be a finitely generated branch group whose action on $\partial T$ has finite groups of germs. 
Then $G$ has Property FW if and only if it is just-infinite and all the graphs of germs of its action on $\partial T$ are one-ended. 
\end{cor}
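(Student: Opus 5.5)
We prove the two implications separately, using the fact from \S\ref{s-preliminaries} that a finitely generated group has Property FW if and only if every infinite Schreier graph $\Gamma_{G/H}$ is one-ended.

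\emph{Necessity.} Suppose $G$ has Property FW.

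First, each graph of germs $\Gammatilde_\xi=\Gamma_{G/G^0_\xi}$ is a Schreier graph of $G$. It is infinite: $G_\xi$ has infinite index in $G$ because the orbit of $\xi$ is infinite (by level-transitivity), and $G^0_\xi\le G_\xi$. Hence it must be one-ended.

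Second, suppose $G$ is not just-infinite. By Proposition~\ref{p-Grigorchuk} there is some $v$ with $[\rist(v):\rist(v)']=\infty$. I would turn this into an infinite virtually abelian quotient of $G$, which then surjects virtually onto $\Z$ and so gives a two-ended Schreier graph.
\begin{itemize}
\item Let $n=|v|$ and $N=\prod_{|w|=n}\rist(w)'$. This is normal in $G$, since $G$ permutes the $\rist(w)$ with $|w|=n$.
\item The group $\rist(n)/N\simeq\prod_{|w|=n}\rist(w)/\rist(w)'$ is infinite, and it is a finite-index subgroup of $G/N$.
\item Each $\rist(w)$ is finitely generated (it is a direct factor of the finite-index subgroup $\rist(n)$ of $G$), so $\rist(n)/N$ is an infinite finitely generated abelian group.
\end{itemize}
Thus $G/N$ is infinite and virtually abelian. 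Some finite-index subgroup $L\le G$ then surjects onto $\Z$. Taking $H$ to be the preimage in $L$ of $2\Z$ (or of $0$), the Schreier graph $\Gamma_{G/H}$ is quasi-isometric to $\Z$, hence two-ended. This contradicts FW.

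\emph{Sufficiency.} Assume $G$ is just-infinite and all graphs of germs are one-ended, and let $H\le G$ with $\esf(\Gamma_{G/H})\ge2$. We split into two cases.
\begin{itemize}
\item If the action on $G/H$ is not faithful, its kernel is a nontrivial normal subgroup, so $G$ acts through a finite quotient by just-infiniteness. Then $G/H$ is finite, which contradicts having two ends.
\item If the action is faithful, Theorem~\ref{t-multi-ended-graphs} gives $\xi$ with $\esf(\Gammatilde_\xi)\ge\esf(\Gamma_{G/H})\ge2$, contradicting the hypothesis.
\end{itemize}
Hence every infinite Schreier graph is one-ended, and $G$ has FW.

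The only mildly delicate step is the necessity of just-infiniteness, specifically producing the infinite virtually abelian quotient from $[\rist(v):\rist(v)']=\infty$. This uses finite generation of rigid stabilizers and the normality of $N$ noted above.
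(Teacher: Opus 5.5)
Your proof is correct and follows the paper's argument: failure of just-infiniteness yields an infinite virtually abelian quotient and hence a two-ended Schreier graph, and under just-infiniteness every infinite Schreier graph is faithful, so Theorem~\ref{t-multi-ended-graphs} applies (the paper simply cites that infinite virtually abelian groups have PW, where you build the quotient via $\rist(n)'$ and the two-ended graph by hand). One small slip: the preimage of $2\Z$ has finite index in $G$ and so gives a finite Schreier graph, hence you must take $H$ to be the kernel of $L\to\Z$.
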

\begin{proof}
    If $G$ is not just-infinite, then by Proposition \ref{p-Grigorchuk} it admits an infinite virtually abelian quotient, and thus it does not have Property FW (in fact, every infinite virtually abelian group has Property PW \cite[Proposition 5.C.2]{Cor-FWsurvey}). If $G$ is just-infinite, then all its infinite Schreier graphs are faithful, and the conclusion follows from Theorem \ref{t-multi-ended-graphs}. 
\end{proof}
\begin{proof}[Proof of Theorem \ref{t-intro-germs}]
    Combine Theorem \ref{t-multi-ended-graphs} and Proposition \ref{p-Grigorchuk}. The last statement is Corollary \ref{c-FW}. \qedhere\end{proof}
 A \emph{virtual homomorphism} $\phi\colon G\dashrightarrow \Z$ is a homomorphism defined on a finite-index subgroup $\mathrm{Dom}(\phi)$. Such a homomorphism can be induced to a function $\widehat{\phi}\colon G\to \Z$ by choosing a system $\Omega$ of coset representatives of $\mathrm{Dom}(\phi)$, and setting $\widehat{\phi}(ht)=\phi(h)$ for $h\in \mathrm{Dom}(\phi)$ and $t\in \Omega$. Two choices of $\Omega$ yield extensions at bounded distance. So we can abuse terminology, and say that a function on $G$ is at bounded distance from a virtual homomorphism  $\phi\colon G\dashrightarrow \Z$ if it is at bounded distance from $\widehat{\phi}$. With this terminology, \cite[Proposition 6.B.3]{Cor-FWsurvey} states that any cardinal-definite function on a virtually abelian group $H$ is at bounded distance from $\sum_{i=1}^r|\phi_i|$, where $\phi_i\colon H\dashrightarrow \Z$ are  virtual homomorphisms. Hence Theorem \ref{t-multi-ended-graphs}, combined with Proposition \ref{p-Grigorchuk}, yields the following. 

\begin{cor} \label{c-cardinal-definite}
    For $G\le \aut(T)$ as in Theorem \ref{t-multi-ended-graphs}, every cardinal-definite function on $G$ is at bounded distance from  a function of the form \[\sum_{i=1}^r \alpha_i\nu_{A_i} +\sum_{i=1}^s|\phi_i|,\] where
each $A_i$ is a commensurated subset of $\Gammatilde_{\xi_i}$ for some $\xi_i\in \partial T$, each  $\alpha_i\in \mathbb{Q}_{>0}$, and the $\phi_i\colon G\dashrightarrow \Z$ are finitely many virtual homomorphism.  
    
\end{cor}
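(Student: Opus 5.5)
The plan is to decompose an arbitrary cardinal-definite function into transitive pieces and treat faithful and non-faithful pieces separately. Let $\nu_A$ be a cardinal-definite function associated to a commensurated subset $A$ of some $G$-set $X$. If $\nu_A$ is bounded there is nothing to prove (take $r=s=0$). Otherwise, by the preliminary facts of \S\ref{s-preliminaries}, $\nu_A$ is at bounded distance from a finite sum $\sum_{j}\nu_{A_j}$, where each $A_j$ is a commensurated subset of a transitive $G$-set $X_j=G/H_j$ whose Schreier graph has at least two ends. It therefore suffices to show that each $\nu_{A_j}$ individually is at bounded distance from a function of the required form. Summing the resulting expressions then gives the corollary, since bounded errors add up to a bounded error.

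Fix one such $X_j=G/H_j$ and split according to whether the action is faithful.

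\textbf{Faithful case.} Theorem \ref{t-multi-ended-graphs} applies directly. It gives a point $\xi\in\partial T$ with $H_j\le G_\xi$ of finite index, and a commensurated subset $\widetilde{A}\subset\Gammatilde_\xi$ such that $\nu_{A_j}-\alpha\nu_{\widetilde{A}}\in\ell^\infty(G)$. Here $\alpha$ is an explicit ratio of finite indices, so $\alpha\in\mathbb{Q}_{>0}$. This case contributes one term $\alpha_i\nu_{A_i}$.

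\textbf{Non-faithful case.} The kernel $N$ of the action is a non-trivial normal subgroup of $G$. By Proposition \ref{p-Grigorchuk}, $G/N$ is virtually abelian; it is also finitely generated. The function $\nu_{A_j}$ factors through $G\to G/N$ and is a cardinal-definite function on $G/N$.

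We then apply \cite[Proposition 6.B.3]{Cor-FWsurvey} to $G/N$. This writes $\nu_{A_j}$, up to bounded distance, as $\sum_i|\psi_i|$, where the $\psi_i\colon G/N\dashrightarrow\Z$ are virtual homomorphisms. Composing with the quotient map gives virtual homomorphisms $\phi_i=\psi_i\circ\pi$ on $G$, with domain $\pi^{-1}(\mathrm{Dom}\,\psi_i)$, which has finite index in $G$.

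The only point needing care is that induced extensions are compatible with pulling back. Choose coset representatives of $\mathrm{Dom}(\psi_i)$ in $G/N$, and lift them to $G$; the lifts are coset representatives of $\mathrm{Dom}(\phi_i)$. With these choices $\widehat{\phi_i}=\widehat{\psi_i}\circ\pi$ exactly. Any other choice of representatives changes $\widehat{\phi_i}$ only by a bounded amount, so bounded distance on $G/N$ pulls back to bounded distance on $G$.

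I expect no real obstacle, since the corollary is a bookkeeping combination of Theorem \ref{t-multi-ended-graphs} and Proposition \ref{p-Grigorchuk} with Cornulier's classification for virtually abelian groups. The mildest subtlety is the reduction to transitive multi-ended pieces, which was recorded in \S\ref{s-preliminaries}.
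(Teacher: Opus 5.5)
Your proposal is correct and follows the same route as the paper. You reduce to transitive multi-ended pieces, handle the faithful ones with Theorem \ref{t-multi-ended-graphs} (where $\alpha$ is a ratio of finite indices), and handle the non-faithful ones via Proposition \ref{p-Grigorchuk} together with Cornulier's classification for virtually abelian groups. Your check that induced extensions of virtual homomorphisms commute with pulling back along $G\to G/N$ is a detail the paper leaves implicit.
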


\section{Contracting self-similar groups}
\label{s-cutpoints}
    

    \subsection{Main definitions}
\label{s-preliminaries-contracting}
Let $\alb$ be a finite alphabet. The set $\alb^\ast$ of finite words in $\alb$ is naturally a rooted tree, with root the empty word $\varnothing$, where each $v\in \alb^\ast$ is connected by an edge to $vx$ for every $x\in \alb$. Hence the subtree rooted at $v$ is the set $v\alb^\ast$ of words beginning with $v$. The boundary $\partial \xs$ of the tree is identified with the set $\xo$ of right-infinite sequences $x_1x_2\cdots$.

		For every $g\in \aut(\alb^\ast)$ and every $v\in \alb^\ast$, there exists a unique element $g|_v\in \aut(\alb^\ast)$, called the \emph{section} of $g$ at $v$, satisfying 	\[g(vw)=g(v)g|_v(w),\]
		for every $w\in \alb^\ast$. Sections satisfy the cocycle rule $gh|_v=g|_{h(v)}h|_v$.
        
		\begin{defin}
        \label{def:selfsimilar1}
		A subgroup $G\le \aut(\alb^\ast)$ is \emph{self-similar} if for every $g\in G$ and $v\in \alb^\ast$, we have $g|_v\in G$. 
		\end{defin} 

        For a self-similar subgroup $G\le\aut(\alb^\ast)$, we denote by $\bim$ the set $\alb\times G$ together with commuting left and right actions given by
        \[g_1\cdot (x\cdot h)\cdot g_2=g_1(x)\cdot g_1|_xhg_2.\]
        Note that the right action is free and has orbits $x\cdot G$ for $x\in\alb$.

        One can interpret $\bim$ as the set of maps $x\cdot g\colon w\mapsto xg(w)\colon\xs\arr\xs$. Then the left and the right actions are post- and pre-compositions with the action of $G$ on $\alb^\ast$.

More generally, we adopt the following definition.

\begin{defin}
    A \emph{self-similar group} is a pair $(G, \bim)$ consisting of a group $G$ and a \emph{$G$-biset} $\bim$, i.e., a set with commuting left and right actions of $G$ on it, such that the right action is free and has finitely many orbits. A subset $\alb\subset\bim$ is a \emph{basis} of the biset if it intersects every right orbit once.
\end{defin}

If $\bim_1, \bim_2$ are a right $G$-set and a left $G$-set, respectively, then we denote by $\bim_1\otimes\bim_2$ the set of orbits of the action $g\cdot (x, y)=(x\cdot g^{-1}, g\cdot y)$ of $G$ on $\bim_1\times\bim_2$. We write the orbit of $(x_1, x_2)\in\bim_1\times\bim_2$ as $x_1\otimes x_2$, so that we have $x_1\cdot g\otimes x_2=x_1\otimes g\cdot x_2$ for all $x_i\in\bim_i$ and $g\in G$. 

If $\bim_1, \bim_2$ are $G$-bisets, then the set $\bim_1\otimes\bim_2$ is a biset with respect to the actions
\[g_1\cdot (x_1\otimes x_2)\cdot g_2=(g_1\cdot x_1)\otimes (x_2\cdot g_2).\]
It is easy to see that the bisets $\bim_1\otimes(\bim_2\otimes\bim_3)$ and $(\bim_1\otimes\bim_2)\otimes\bim_3$ are isomorphic, so we can define, for every biset $\bim$, the biset $\bim^{\otimes n}$, for every $n\ge 1$. We also set $\bim^{\otimes 0}=G$ with the natural left and right actions by multiplication.

For every self-similar group $(G, \bim)$ and for every $n\ge 0$, we have the natural action of $G$ on the set of right orbits $\bim^{\otimes n}/G$. We say that $(G, \bim)$ is \emph{faithful} if the left action of $G$ on $\bigsqcup_{n\ge 0}\bim^{\otimes n}/G$ is faithful. Otherwise, if $K$ is the kernel of the action, then the self-similar group $(G/K, \bim/K)$ is called the \emph{faithful quotient} of $(G, \bim)$.

If $\alb\subset\bim$ is a basis, then $\alb^{\otimes n}$ is a basis of the biset $\bim^{\otimes n}$, hence the set of right orbits $\bim^{\otimes n}/G$ is naturally identified with $\alb^{\otimes n}$. 

We write $x_1\otimes x_2\otimes\cdots\otimes x_n\in\alb^{\otimes n}$ as $x_1x_2\ldots x_n$, and $\alb^{\otimes n}=\alb^n$. After identification of $\bim^{\otimes n}/G$ with $\alb^n$, the left action of $G$ on $\bim^{\otimes n}/G$ is identified with an action of $G$ on $\alb^n$. The constructed action of $G$ on $\xs=\bigcup_{n\ge 0}\alb^n$ is self-similar in the sense of Definition~\ref{def:selfsimilar1}. We denote the image of $v\in\xs$ under $g$ for this action by $g(v)$, while the image of $v\in\bim^{\otimes n}$ with respect to the left action in the biset $\bim^{\otimes n}$ is denoted by $g\cdot v$. For every $g\in G$ and $v\in \alb^n\subset \bim^{\otimes n}$ there is a unique element of $G$ denoted $g|_v$ such that 
\[g\cdot v=g(v)\cdot g|_v\]
in $\bim^{\otimes n}$. The action of $G$ on $\xs$ is defined by the recurrent formula
\[g(vw)=g(v)g|_v(w).\]
for $v, w\in\xs$.

If $S$ is a generating set of $G$, and $\alb$ is a basis of $\bim$, then the self-similar group $(G, \bim)$ is uniquely defined by the equalities
\[g\cdot x=y\cdot g|_x,\]
for $g\in S$, $x, y=g(x)\in\alb$, where $g|_x\in G$ is written as a product of $S\cup S^{-1}$. 

\begin{defin}
    A self-similar group $(G, \bim)$ is said to be \emph{self-replicating} if the left action of $G$ on $\bim$ is transitive. Equivalently, if the action of $G$ on the first level $\alb$ of the tree $\xs$ is transitive and the map $g\mapsto g|_x\colon G_x\longrightarrow G$ from the stabilizer of $x\in\alb$ to $G$ is onto (for some and hence for all $x\in\alb$).
\end{defin}

If $(G, \bim)$ is self-replicating, then the left action of $G$ on $\bim^{\otimes n}$ is transitive for every $n$.

For $w\in\bim^{\otimes n}$, we denote by $G_w$ the stabilizer of the right orbit $w\cdot G$ with respect to the left action of $G$ on $\bim^{\otimes n}$. In other words, $G_w$ for $w\in\alb^n$ is the stabilizer of $w$ for the action of $G$ on $\xs$, coherently with the notation in \S\ref{s-preliminaries-trees} for arbitrary groups acting on rooted trees. We denote by $G_w^0$ the stabilizer of $w\in\bim^{\otimes n}$ with respect to the left $G$-action. If $w\in\alb^n$, then $G_w^0=\{g\in G\colon g(w)=w, g|_w=1\}$ again coherently with the notation in \S\ref{s-preliminaries-trees}.

Note that it then follows that the graph $\Xi_n$ defined in Definition \ref{d-graphs-trees} coincides with the Schreier graph $\Gamma_{\bim^{\otimes n}}$ of the left action of $G$ on $\bim^{\otimes n}$ (which is transitive if $G$ is self-replicating). We shall always identify the vertex set of $\Xi_n$ with $\bim^{\otimes n}$ in this case.

\begin{defin} \label{def:contracting}
    A self-similar group $(G, \bim)$ with basis $\alb$ is \emph{contracting} if there exists a finite set $\nuke\subset G$ such that for every $g\in G$, there exists $m$ such that $g|_v\in \nuke$ for all $v\in \alb^*,|v|\ge m$. The smallest set $\nuke$ with this property is called the \emph{nucleus} of $G$. 
\end{defin}

One can show (see~\cite[Corollary 2.11.7]{Nek:book}) that the property of a self-similar group to be contracting does not depend on the choice of the basis $\alb$. However, the nucleus depends on the choice of $\alb$.

In this paper, we will use several times the fact that the action of a faithful contracting self-similar group on the tree boundary $\xo$ has finite groups of germs \cite[Proposition 4.1]{Nek:free}.

We will also need the following result (see~\cite[Lemma~3.5.2]{Nek:book}).
\begin{lemma}
\label{lem:connectivityB}
Suppose that $(G, \bim)$ is self-replicating, contracting, and finitely generated, and let $\alb\subset\bim$, $\nuke\subset G$ be a basis and the associated nucleus. There exists a finite set $L\subset G$ such that for any $n\ge 1$ and any $v, u\in\alb^n$ there exists a sequence $h_1, h_2, \ldots, h_m\in\nuke$ such that
\[h_mh_{m-1}\cdots h_1(v)=u, \quad h_mh_{m-1}\cdots h_1|_v=e\]
 and $(h_kh_{k-1}\cdots h_1)|_v\in L$ for all $1\le k\le m$.

\end{lemma}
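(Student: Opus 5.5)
The plan is to first show that the nucleus $\nuke$ generates $G$, and then build the required sequences by induction on $n$. For the induction we lift a path at level $n-m$ to level $n$ through a fixed prefix of length $m$. Contraction of word length will keep all intermediate sections in a ball of fixed radius, and this ball will be the set $L$.

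\emph{Step 1: $\nuke$ generates $G$.} Fix a finite symmetric generating set $S$. By Definition~\ref{def:contracting}, there is $m_0$ such that $s|_w\in\nuke$ for all $s\in S$ and all $|w|\ge m_0$. Let $g\in G$ and fix $w\in\alb^{m_0}$. Self-replication, iterated $m_0$ times, gives $h\in G_w$ with $h|_w=g$. Write $h=s_k\cdots s_1$. The cocycle rule gives
\[
g=h|_w=s_k|_{w_{k-1}}\cdots s_1|_{w_0},\qquad w_j=s_j\cdots s_1(w),
\]
and each factor lies in $\nuke$. Replacing $S$ by $S\cup\nuke$ changes nothing, so we may assume $\nuke\subset S$. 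We will also use the standard contraction estimate from \cite[\S2.11]{Nek:book}: there are $m$, $\lambda<1$ and $C$ such that $\ell_S(g|_w)\le\lambda\,\ell_S(g)+C$ for all $g\in G$ and $|w|=m$. We work with $\ell_\nuke$, which is allowed since $\nuke$ generates $G$.

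\emph{Step 2: lifting.} Fix $z\in\alb^m$. For each $h\in\nuke$, self-replication gives $\hat h\in G_z$ with $\hat h|_z=h$. By Step~1 we write $\hat h$ as a fixed word in $\nuke$ of length at most $M$. Let $v=z v'$ and $u=z u'$ with $|v'|=|u'|=n-m$, and suppose $h_1,\ldots,h_k\in\nuke$ is a sequence for $v',u'$ as in the statement. Concatenate the words for $\hat h_1,\ldots,\hat h_k$. Then every partial product $P$ of the concatenation has the form $P=p\,\hat h_{j}\cdots\hat h_1$, where $p$ is a prefix of the word of $\hat h_{j+1}$. Consequently
\[
P|_{v}=\bigl(p|_{\hat h_j\cdots \hat h_1(z)}\bigr)|_{\ast}\cdot(h_j\cdots h_1)|_{v'}.
\]
The first factor is a section of an element of $\nuke$-length at most $M$ at a word of length $n\ge m$, so its length is at most $\lambda M+C$. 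The second factor lies in $L$ by induction. Handling the composition more carefully gives $\ell_\nuke(P|_v)\le\lambda(M+R)+C'$ whenever $L\subseteq B_\nuke(R)$. Choosing $R$ large makes this at most $R$, so $L:=B_\nuke(R)$ is preserved under lifting. The final section is $(h_k\cdots h_1)|_{v'}=e$, so the product of the lifted words has trivial section at $v$ and fixes the prefix $z$.

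\emph{Step 3: changing the prefix and the base case.} To pass between different prefixes $z,z'\in\alb^m$ and to treat levels $n\le m$, we use finitely many fixed words. For each pair of words of length at most $m$, self-replication gives an element $g$ mapping one to the other. Multiplying by an element of $G_v$ with prescribed section, again by self-replication, makes its section at $v$ trivial, and we write the result as a word in $\nuke$. For a long $v$, moving the first $m$ letters this way costs a bounded word whose sections at $v$ have bounded length, by the same contraction estimate. So enlarging $R$ once absorbs these moves.

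The main obstacle is the bookkeeping in Step~2. One has to check that the contraction inequality really closes the induction, that is, that the sections of the partial products are bounded by a length $R$ independent of $n$. I would first state and prove the uniform bound $\ell_\nuke(g|_w)\le\lambda\ell_\nuke(g)+C$ for $|w|=m$, and then verify the recursion $R\mapsto\lambda(M+R)+C'$ directly.
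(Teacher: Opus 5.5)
Your Steps 1 and 3 are fine, and the ingredients are the right ones: the nucleus generates $G$, the estimate $\ell_\nuke(g|_w)\le\lambda\ell_\nuke(g)+C$ holds for $|w|=m$, and $L$ is a ball $B_\nuke(R)$. However, there is a genuine gap in Step~2: the induction does not close, and the inequality $\ell_\nuke(P|_v)\le\lambda(M+R)+C'$ is not justified. Since each $\hat h_i$ fixes $z$ with section $h_i$, you have $P|_z=p|_z\,h_j\cdots h_1$, and hence
\[P|_v=(p|_z)|_{u_j}\cdot (h_j\cdots h_1)|_{v'},\qquad u_j=h_j\cdots h_1(v').\]
The second factor is already the section at the whole remaining word $v'$, so no letters are left to contract it. Moreover, the element $P|_z$ of which $P|_v$ is a section has unbounded length. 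The only bound available is therefore $\ell_\nuke(P|_v)\le R+\lambda M+C$. The recursion is thus $R\mapsto R+\lambda M+C$, which gives a radius growing linearly in $n$ rather than a finite $L$. Your refined bound would need $P|_v$ to be a section at depth $\ge m$ of an element of length $\le M+R$, and nothing in the construction provides this. Lifting through a prefix on the root side always multiplies the inductive section on the left by a bounded element without shrinking it.

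The fix is to orient the induction the other way: put the inductive level on top and a fixed bounded path at the bottom. To go from level $n$ to level $n+m$, write the endpoints as $vx$ and $uy$, with $v,u\in\alb^n$ and $x,y\in\alb^m$. For each pair $x,y$, fix once and for all a path $x=q_0,\dots,q_t=y$ in $\Xi_m$ with steps $s_i\in\nuke$. Write $q_i=c_i\cdot f_i$ with $c_i\in\alb^m$ and $f_i$ in a finite set $F$. Then run the construction from the proof of Lemma~\ref{lem:extendingpaths} with the roles of the two levels exchanged. Realize each step $s_i$ by some $r_i\in\nuke$ and $w_i\in\alb^n$ with $r_i|_{w_i}=s_i$, so that $r_i\cdot(w_i\otimes q_{i-1})=r_i(w_i)\otimes q_i$. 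Join consecutive moves by level-$n$ paths from the induction hypothesis, acting on the first factor of $\bim^{\otimes n}\otimes\bim^{\otimes m}$. Every vertex visited then has the form $a\cdot l\otimes c_i\cdot f_i=a\,l(c_i)\cdot l|_{c_i}f_i$ with $l\in B_\nuke(R)$. Its section therefore satisfies $\ell_\nuke(l|_{c_i}f_i)\le\lambda R+C+\max_{f\in F}\ell_\nuke(f)$, which is $\le R$ once $R$ is large. Here the inductive section is itself sectioned by the $m$ new letters, and this is exactly what lets the contraction close the induction. The levels $1\le n\le m$ are handled by finitely many fixed paths, as in your Step~3.
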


Note that the condition $h_mh_{m-1}\cdots h_1(v)=u, \quad h_mh_{m-1}\cdots h_1|_v=e$ can be rephrased as the equality $h_mh_{m-1}\cdots h_1 \cdot v=u$
in the set $\bim^{\otimes n}$. In other words, Lemma \ref{lem:connectivityB} says that any two points $v, u\in\alb^n\subset \bim^{\otimes n}$ can be connected by a path in the graph $\Xi_n$ (identified with the Schreier graph $\Gamma_{\bim^{\otimes n}}$) contained inside the set $\alb^n\cdot L$. Lemma~\ref{lem:connectivityB} implies the following.

\begin{lemma}
    \label{lem:extendingpaths}
Let $L$ be a subset as in Lemma \ref{lem:connectivityB}. Consider a path $v_1, v_2, \ldots, v_k$ in $\Xi_n$. Then for arbitrary $w_1, w_2\in\alb^m$ there is a path from $w_1v_1$ to $w_2v_k$ in $\Xi_{n+m}$ contained in $\bigcup_{i=1}^k \alb^m\cdot L\cdot v_i$. 
\end{lemma}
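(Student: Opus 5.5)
We build the required path in two stages: first connect $w_1v_1$ to $w_2v_1$ inside the fibre over $v_1$, then transport a path along $v_1,\dots,v_k$.

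Throughout, vertices of $\Xi_{n+m}$ are identified with $\bim^{\otimes(n+m)}=\bim^{\otimes m}\otimes\bim^{\otimes n}$. A word $w\in\alb^m$ concatenated with $v\in\bim^{\otimes n}$ is read as $w\otimes v$. The set $\alb^m\cdot L\cdot v_i$ means the set of elements $w\otimes (l\cdot v_i)$ with $w\in\alb^m$ and $l\in L$. This agrees with $(w\cdot l)\otimes v_i$.

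\emph{Step 1: the fibre over $v_1$.} Apply Lemma \ref{lem:connectivityB} with $n$ replaced by $m$ to $w_1,w_2\in\alb^m$. This gives $h_1,\dots,h_r\in\nuke$, with $g_j=h_j\cdots h_1$, such that:
\begin{itemize}
\item $g_r\cdot w_1=w_2$ in $\bim^{\otimes m}$;
\item $g_j\cdot w_1=g_j(w_1)\cdot l_j$ with $l_j=g_j|_{w_1}\in L$ for every $j$.
\end{itemize}
We may assume the generating set $S$ contains $\nuke$; otherwise, write each nucleus element as a word of bounded length in $S$, and the resulting intermediate sections stay in a finite set that we add to $L$.

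Tensoring on the right by $v_1$ gives the path $g_j\cdot(w_1\otimes v_1)=g_j(w_1)\otimes(l_j\cdot v_1)$ in $\Xi_{n+m}$. It runs from $w_1v_1$ to $w_2\otimes v_1=w_2v_1$, and every vertex lies in $\alb^m\cdot L\cdot v_1$.

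\emph{Step 2: the edges $v_i\to v_{i+1}$.} Write $v_{i+1}=s_i\cdot v_i$ with $s_i\in S$, and apply the generators $s_1,\dots,s_{k-1}$ in turn to the vertex $w_2v_1$. The relation $s\cdot(w\otimes v)=s(w)\otimes(s|_w\cdot v)$ does not keep the left factor equal to $w_2$, nor the right factor equal to $v_{i+1}$. The remedy is to alternate: after each step, use Step 1 inside the fibre to return to a vertex of the form $w\otimes v_{i+1}$ with $w\in\alb^m$, then continue.

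Concretely, suppose we are at $u\otimes v_i$ with $u\in\alb^m$. Use Lemma \ref{lem:connectivityB} to move $u$ to some $u'\in\alb^m$ chosen so that $s_i|_{u'}=e$. Then the edge labelled $s_i$ sends $u'\otimes v_i$ to $s_i(u')\otimes v_{i+1}$, and $s_i(u')\in\alb^m$. Such a $u'$ exists only when $s_i$ has trivial section somewhere, which fails in general, so this step needs modification.

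The more robust choice is to let the edge itself produce a section. Applying $s_i$ to $u\otimes v_i$ gives $s_i(u)\otimes(s_i|_u\cdot v_i)$, not $s_i(u)\otimes v_{i+1}$. Instead, first use the Step 1 path that ends at the element $u\otimes v_i$ carrying the appropriate $L$-decoration, namely
\[
u\cdot l\otimes v_i=u\otimes l\cdot v_i .
\]
Lemma \ref{lem:connectivityB} lets us prescribe both the endpoint word and a trivial total section, so the transitions can be arranged to land exactly on $w\otimes v_{i+1}$ with $w\in\alb^m$. After the final step, apply Step 1 once more to reach $w_2v_k$. Every vertex visited lies in some $\alb^m\cdot L\cdot v_i$, after enlarging $L$ by a fixed finite set (products of $L$ with $S$).

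\emph{Main obstacle.} The statement fixes $L$ to be the set from Lemma \ref{lem:connectivityB}, so the key technical point is ensuring the decorations stay inside that same $L$. The first attempt to handle this is to take $L$ closed under left multiplication by $S\cup\nuke$ where needed, which the proof of \cite[Lemma~3.5.2]{Nek:book} allows. The alternative is to note that the lemma only requires some finite $L$, and to replace $L$ by a larger finite set.
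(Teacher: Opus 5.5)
Your Step 1 is correct, and it is how the paper uses Lemma \ref{lem:connectivityB}. A path in $\Xi_m$ from $w$ to $w'$ with decorations in $L$ and trivial total section, tensored on the right with $v$, gives a path from $w\otimes v$ to $w'\otimes v$ inside $\alb^m\cdot L\cdot v$.

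The gap is in Step 2: you never construct a passage from the fibre over $v_i$ to the fibre over $v_{i+1}$. Every path supplied by Lemma \ref{lem:connectivityB} has trivial total section, so starting at $u\otimes l\cdot v_i$ it ends at $w\otimes l\cdot v_i$ and stays over $v_i$. Prescribing the endpoint word together with a \emph{trivial} total section therefore cannot land you on $w\otimes v_{i+1}$. For that you need an edge or path whose total section at the current word sends $v_i$ to $v_{i+1}$, and nothing in your argument produces one.

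Your first attempt is also wrong as written. If $s_i|_{u'}=e$, then $s_i\cdot(u'\otimes v_i)=s_i(u')\otimes v_i$, not $s_i(u')\otimes v_{i+1}$. Applying $s_i$ itself at level $n+m$ acts on the $v_i$-factor only through its section, so it is the wrong element to use.

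Enlarging $L$ (e.g.\ by $L\cdot S$) does not help either. It only relabels $u\otimes v_{i+1}=(u\cdot s_i)\otimes v_i$ as a point of the enlarged set over $v_i$, without giving an edge that reaches it. It would also prove a weaker statement than the one claimed for the given $L$.

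The missing idea, which is the core of the paper's proof, is the minimality of the nucleus.
\begin{itemize}
\item Since $\Xi_n$ is taken with respect to the generating set $\nuke$, write $v_{i+1}=s_i\cdot v_i$ with $s_i\in\nuke$.
\item The set $\{r|_u\colon r\in\nuke,\ u\in\alb^m\}$ also satisfies Definition \ref{def:contracting}: a section of $g$ at a word of length $\ge m_0+m$ is a section, at a word of length $m$, of some $g|_{v'}\in\nuke$. By minimality this set contains $\nuke$, so there are $u_i\in\alb^m$ and $r_i\in\nuke$ with $r_i|_{u_i}=s_i$.
\item Then $r_i\cdot(u_i\otimes v_i)=r_i(u_i)\otimes s_i\cdot v_i=r_i(u_i)\otimes v_{i+1}$. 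This is a single edge of $\Xi_{n+m}$ joining the two fibres, with trivial decoration at both ends.
\end{itemize}
Your Step 1 then connects $r_i(u_i)\otimes v_{i+1}$ to $u_{i+1}\otimes v_{i+1}$ inside $\alb^m\cdot L\cdot v_{i+1}$. It likewise connects $w_1v_1$ to $u_1v_1$, and $u_kv_k$ to $w_2v_k$. Concatenating these pieces gives the lemma for the original $L$. The trivially decorated vertices lie in $\alb^m\cdot L\cdot v_i$ because $e\in L$: it is the last decoration of any nonempty path given by Lemma \ref{lem:connectivityB}.
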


\begin{proof}
    Let $s_i\in\nuke$ be such that $s_i\cdot v_i=v_{i+1}$. There exist $u_i\in\alb^m$ and $r_i\in\nuke$ such that $r_i|_{u_i}=s_i$ (otherwise, $\nuke$ is not the minimal set satisfying the conditions of Definition~\ref{def:contracting}). Then $u_iv_i$ is adjacent to $r_i\cdot u_iv_i=r_i(u_i)v_{i+1}$ in $\Xi_{n+m}$. By Lemma~\ref{lem:connectivityB}, we can connect $r_i(u_i)$ to $u_{i+1}$ by a path in $\Xi_m$ staying inside $\alb^m\cdot L$. Appending to it $v_i$ and $v_{i+1}$ on the right we get a path from $r_i\cdot u_iv_i$ to $u_{i+1}v_{i+1}$ in $\Xi_{n+m}$ staying inside $\alb^m\cdot L\cdot v_{i+1}$. The union of these paths together with the edges from $u_iv_i$ to $r_i\cdot u_iv_i$ will be a path from $u_1v_1$ to $u_kv_k$ inside $\bigcup_{i=1}^k \alb^m\cdot L\cdot v_i$. Using again Lemma \ref{lem:connectivityB}, we can concatenate it with a path from $w_1v_1$ to $u_1v_1$ and another path from $u_kv_k$ to $w_kv_k$ staying inside the same set, to obtain a path satisfying the desired conclusion.
\end{proof}

The class of contracting self-similar group is distinct from the class of branch groups---and neither is included in the other. However the two classes have a large intersection, and it is on that intersection that our results become the most satisfactory. 
 We say that $(G, \bim)$ is a \emph{self-similar branch group} if it is faithful and branch with respect to its embedding in $\aut(\xs)$ for some basis $\alb$. We also recall the following terminology.
 
 \begin{defin}
     A faithful self-similar group $(G, \bim)$ is \emph{regular branch} over a finite index subgroup $K\le G$ if $K^{\alb}\le K$ for some basis $\alb$. Here we identify $K^{\alb}$ with the subgroup of $\aut(\xs)$ of elements that fix the first level and have sections in $K$.
 \end{defin} If $G$ is regular branch over $K$, it follows that every rigid stabilizer $\rist(v)$ contains a canonical copy of $K$.
\subsection{Limit spaces}
Fix a contracting self-similar group $(G, \bim)$, let $\alb\subset\bim$ be a basis, and let $\nuke$ be the associated nucleus. Consider the space $\alb^{-\omega}\times G$ of sequences $\ldots x_2x_1\cdot g$, where $G$ is discrete. We say that two sequences $\ldots x_2x_1\cdot g$ and $\ldots y_2y_1\cdot h$ are \emph{asymptotically equivalent} if there exists a finite set $N\subset G$ and a sequence $g_k\in N$ such that $g_k\cdot x_k\ldots x_2x_1\cdot g=y_k\ldots y_2y_1\cdot h$ in $\bim^{\otimes k}$ for every $k$. The quotient of $\alb^{-\omega}\times G$ by this equivalence relation is denoted by $\limg$ and is called the \emph{limit $G$-space}. The natural right action of $G$ on $\alb^{-\omega}\times G$ preserves the equivalence relation, hence it induces a right action of $G$ on $\limg$.

It is shown in~\cite[Proposition~3.2.6]{Nek:book} that $\ldots x_2x_1\cdot g$ and $\ldots y_2y_1\cdot h$ are asymptotically equivalent if and only if there is a sequence $s_n\in\nuke$, $n\ge 0$, such that $s_n\cdot x_n=y_n\cdot s_{n-1}$ for all $n\ge 1$, and $s_0g=h$.

It is shown in~\cite[\S~3.2]{Nek:book} that the $G$-space $\limg$ does not depend on the choice of the basis $\alb$ (up to topological conjugacy of $G$-spaces), that $\limg$ is locally compact and metrizable, and that the action of $G$ on $\limg$ is proper and (obviously) co-compact.

The following is proved in~\cite[Theorem~3.5.1]{Nek:book}.

\begin{prop}
\label{prop:locconnected}
Every finitely generated contracting self-replicating group is generated by its nucleus. The limit $G$-space $\limg$ is path-connected and locally path-connected if $(G, \bim)$ is self-replicating.
\end{prop}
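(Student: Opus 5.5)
The statement has two parts, and I would prove them separately. The generation statement is purely algebraic. The topological statement follows the standard strategy for self-similar sets: tile graphs are connected, tiles shrink, so tiles are Peano continua.

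\emph{Generation by the nucleus.} Fix a finite generating set $S$ of $G$. By Definition \ref{def:contracting} there is $m$ such that $s|_u\in\nuke$ for all $s\in S\cup S^{-1}$ and all $u\in\alb^*$ with $|u|\ge m$. Fix $v\in\alb^m$. Self-replication implies, by iterating the surjections $G_x\to G$, that the map $G_v\to G$, $h\mapsto h|_v$, is onto. So for any $g\in G$ choose $h\in G_v$ with $h|_v=g$ and write $h=s_k\cdots s_1$ with $s_i\in S\cup S^{-1}$. The cocycle rule gives
\[g=h|_v=s_k|_{s_{k-1}\cdots s_1(v)}\cdots s_2|_{s_1(v)}\,s_1|_v ,\]
and every factor is a section at a word of length $m$, hence lies in $\nuke$. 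Therefore $G=\langle\nuke\rangle$.

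\emph{Tiles and their adjacency.} For $v\in\alb^n$ and $g\in G$, define the tile $\mathcal T_{v\cdot g}\subset\limg$ as the image of $\alb^{-\omega}v\cdot g$; it depends only on the element $v\cdot g\in\bim^{\otimes n}$. Tiles are compact, and $\mathcal T_{v\cdot g}=\bigcup_{x\in\alb}\mathcal T_{xv\cdot g}$. I would fix a metric on $\limg$ compatible with the proper cocompact action. The first point to establish is that the diameters of level-$n$ tiles tend to $0$ uniformly in $n$; this should follow from the description of asymptotic equivalence via nucleus sequences $s_n\cdot x_n=y_n\cdot s_{n-1}$. The second point is that if $s\in\nuke$, then $\mathcal T_{w}\cap\mathcal T_{s\cdot w}\neq\varnothing$ for $w\in\bim^{\otimes n}$. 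To see this, use that $s$ is a section of a nucleus element along a left-infinite word, by minimality of $\nuke$, exactly as in the proof of Lemma \ref{lem:extendingpaths}. This produces a left-infinite sequence $\ldots u_2u_1$ and nucleus elements $r_k$ with $r_k\cdot u_k=u'_k\cdot r_{k-1}$ and $r_0=s$. Then $\ldots u_2u_1 w$ and $\ldots u'_2u'_1(s\cdot w)$ are asymptotically equivalent, so they give a common point of the two tiles.

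\emph{Connectedness and local path-connectedness.} By Lemma \ref{lem:connectivityB}, for every $n$ and $u,v\in\alb^n$ there is a chain of nucleus moves from $v$ to $u$ whose intermediate points stay in the bounded set $\alb^n\cdot L$. By the adjacency fact, the level-$n$ subtiles of $\bigcup_{\ell\in L}\mathcal T_{\ell}$ meeting consecutive points of such a chain intersect. Hence the compact set $\bigcup_{\ell\in L}\mathcal T_\ell$ is covered, at every level, by a chain-connected family of sets of uniformly small diameter. I would use this to show it is a continuum which is locally connected (connected im kleinen at every point, using chains of subtiles whose diameters go to zero). Then it is path-connected by Hahn--Mazurkiewicz, and by the same argument so is each rescaled copy $\bigcup_{\ell\in L}\mathcal T_{v\cdot\ell}$. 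Since $\nuke$ generates $G$, adjacent translates $\mathcal T_{\cdot g}$ and $\mathcal T_{\cdot sg}$ intersect, which gives global path-connectedness. For local path-connectedness, a point $\zeta$ has a neighbourhood basis formed by the finite unions of the sets $\bigcup_{\ell\in L}\mathcal T_{v\cdot\ell g}$ at level $n$ that contain $\zeta$. These unions are path-connected because each piece contains $\zeta$, or because the pieces chain to each other through the adjacency fact.

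The main obstacle is the chain-to-path step. One must control diameters uniformly so that a limit of finer and finer chains converges to a genuine continuous path. One must also check that the constant $L$ of Lemma \ref{lem:connectivityB} only enlarges tiles by a bounded factor, so that the neighbourhoods in the last step really shrink to $\zeta$.
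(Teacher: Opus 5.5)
Your argument for generation by the nucleus is correct and is the standard one. The tile facts you list (compactness, uniformly shrinking diameters, $\mathcal T_w\cap\mathcal T_{s\cdot w}\neq\varnothing$ for $s\in\nuke$) are also fine.

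\textbf{The gap.} It is the sentence ``Hence the compact set $\bigcup_{\ell\in L}\mathcal T_\ell$ is covered, at every level, by a chain-connected family'', and everything after it depends on this claim. Lemma~\ref{lem:connectivityB} only produces chains joining the subtiles $\mathcal T_{v\cdot 1}$, $v\in\alb^n$, of $\til$ itself, since the sections $(h_k\cdots h_1)|_v$ start and end at $1$. A subtile $\mathcal T_{v\cdot\ell}$ of $\til\cdot\ell$ is linked to the others by the same lemma only inside $\til\cdot L\ell$, which leaves $\til\cdot L$. After a limit (or quasi-component) argument, your chains give only that $\til$ lies in one connected component of $\til\cdot L$. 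That is exactly Lemma~\ref{lem:connectedtile}, the key step of the proof in \cite{Nek:book} that the paper cites.

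The stronger statement is false in general. Take $G=\Z=\langle a\rangle$ and $\bim=\Z$ with $a\cdot m=m+1$, $m\cdot a=m+3$, and basis $\alb=\{0,1,5\}$.
\begin{itemize}
\item This group is faithful, contracting and self-replicating.
\item By Theorem~\ref{th:uniquenessoflimg}, $\limg=\R$ with $\xi\cdot a=\xi+1$ and $\xi\otimes m=(\xi+m)/3$.
\item The tile is $\til=K=\{\sum_i x_i3^{-i}\colon x_i\in\alb\}\subset[0,5/2]$.
\end{itemize}
Let $q$ be the largest exponent with $a^q\in L$. The piece $(K+5)/3+q\subset[q+5/3,q+5/2]$ of $\til\cdot a^q$ is at distance at least $1/6$ from the rest of $\til\cdot L$, which lies in $(-\infty,q+3/2]$. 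So $\til\cdot L$ is disconnected for every finite $L$. Consequently the steps ``continuum'', ``locally connected'', ``Hahn--Mazurkiewicz'' and ``each rescaled copy is path-connected'' all fail, and both your global and local arguments go with them.

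\textbf{A left/right slip.} The image of $\til\cdot L$ under $\zeta\mapsto\zeta\otimes w$ is $\bigcup_{\ell\in L}\mathcal T_{\ell\cdot w}$, using the left action on $\bim^{\otimes n}$. Your set $\bigcup_{\ell}\mathcal T_{v\cdot\ell g}=\mathcal T_{v\cdot 1}\cdot Lg$ is instead scattered over the different $0$-level tiles $\til\cdot\ell g$. It does not shrink to $\zeta$ in general, and this is the real source of the worry in your last sentence.

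\textbf{How to repair.} Work with connectedness only, and let $C$ be the component of $\til\cdot L$ containing $\til$.
\begin{itemize}
\item Global connectedness: each $\til\cdot g$ lies in the connected set $C\cdot g$, and $\til\cdot g\cap\til\cdot sg\neq\varnothing$ for $s\in\nuke$. Since $\nuke$ generates $G$, $\limg$ is connected.
\item Local connectedness: for $\zeta\in\limg$, take the union of the sets $C\otimes w$ over the level-$n$ tiles $\til\otimes w$ containing $\zeta$. This union is connected and contains the neighbourhood $\mathcal U_n(\zeta)$ (Proposition~\ref{pr:basisofnbhd}). By the uniform contraction, its diameter tends to $0$. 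Hence $\limg$ is connected im kleinen at every point, and therefore locally connected.
\item Paths: $\limg$ is locally compact and metrizable, hence completely metrizable. The Mazurkiewicz--Moore--Menger theorem, applied to $\limg$ and to its connected open subsets, then gives path-connectedness and local path-connectedness.
\end{itemize}
With this route no chain-to-path construction is needed, and neither the tiles nor $\til\cdot L$ need to be connected.
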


For every $w=v\cdot h\in\bim^{\otimes n}$, where $v\in\alb^n$ and $h\in G$, the map 
\[\ldots x_2x_1\cdot g\mapsto \ldots x_2x_1g(v)\cdot g|_vh\]
preserves the asymptotic equivalence relation and hence induces a continuous map, which we denote as $\xi\mapsto \xi\otimes w$. The defined maps induce a $G$-equivariant homeomorphism $\limg\otimes\bim^{\otimes n}\arr\limg$, which we call \emph{natural}.
It is given, in terms of the encoding of $\limg$ as a quotient of $\xmo\times G$ by
\[(\ldots x_2x_1\cdot g)\otimes v\cdot h=\ldots x_2x_1g(v)\cdot g|_vh\]
for $\ldots x_2x_1\in\xmo$, $v\in\alb^n$, and $g, h\in G$.

It is shown in~\cite[Proposition~4.5.30]{nek:dyngroups} (see also~\cite[Theorem~2.11]{nek:IMGdimension}) that there exists a $G$-invariant metric $d$ on $\limg$ such that the maps $\xi\mapsto \xi\otimes w$ are uniformly contracting, i.e., such that there exist $C, L>1$ such that
\[d(\xi_1\otimes w, \xi_2\otimes w)\le CL^{-n}d(\xi_1, \xi_2)\]
for all $\xi_1, \xi_2\in\limg$ and $w\in\bim^{\otimes n}$ for all $n\ge 0$. The following characterization of the limit $G$-space $\limg$ is proved in~\cite[Theorem~3.4.13]{Nek:book}.

\begin{thm}
\label{th:uniquenessoflimg}
Let $(G, \bim)$ be a finitely generated contracting self-similar group. Let $\mathcal{X}$ be a locally compact metric space with a proper co-compact right action of $G$ on it by isometries. Suppose that there exists a $G$-equivariant homeomorphism $\Phi\colon\mathcal{X}\otimes\bim\arr\mathcal{X}$ such that the maps $\mathcal{X}\arr\mathcal{X}\colon\xi\mapsto \Phi(\xi\otimes x)$ are  uniform contractions for every $x\in\bim$. Then there is a $G$-equivariant homeomorphism $\mathcal{X}\arr\limg$ conjugating $\Phi$ with the natural homeomorphism $\limg\otimes\bim\arr\limg$.
\end{thm}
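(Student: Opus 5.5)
The plan is to build an explicit $G$-equivariant map $\Psi\colon \xmo\times G\to\mathcal{X}$ by iterating $\Phi$, and then show three things: $\Psi$ is onto, its fibres are exactly the asymptotic equivalence classes, and the induced bijection $\limg\to\mathcal{X}$ is a homeomorphism conjugating the natural maps.

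\textbf{Step 1: construction of $\Psi$.} Fix a basis $\alb\subset\bim$ and a base point $\zeta_0\in\mathcal{X}$. For $v\in\bim^{\otimes n}$ write $\zeta\otimes v$ for the image of $\zeta\otimes v$ under the $n$-fold iterate $\Phi^{(n)}\colon \mathcal{X}\otimes\bim^{\otimes n}\to\mathcal{X}$ of $\Phi$. Define
\[\Psi(\ldots x_2x_1\cdot g)=\lim_{n\to\infty}\bigl(\zeta_0\otimes x_nx_{n-1}\cdots x_1\bigr)\cdot g .\]
The sequence is Cauchy, uniformly in the input. Indeed, two consecutive terms are images under $\cdot\otimes x_n\cdots x_1$ of $\zeta_0$ and $\zeta_0\otimes x_{n+1}$. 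Their distance is therefore at most $C\lambda^{-n}D$, where $\lambda>1$ is the contraction rate and $D=\max_{x\in\alb}d(\zeta_0,\zeta_0\otimes x)<\infty$. The uniform contraction constant over $n$-fold compositions comes from composing the contractions for $x\in\alb$, which is a finite set. Right multiplication by $g$ is an isometry. So $\Psi$ is well defined, continuous for the product topology, and right $G$-equivariant. Moreover, the identity $\Psi(\ldots x_2x_1\cdot g)\otimes(v\cdot h)=\Psi(\ldots x_2x_1g(v)\cdot g|_vh)$ holds by passing to the limit in the corresponding finite-level identity in $\bim^{\otimes n}$. This gives the conjugacy with the natural homeomorphism once $\Psi$ descends to a homeomorphism.

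\textbf{Step 2: surjectivity.} Let $K\subset\mathcal{X}$ be a compact set with $K\cdot G=\mathcal{X}$, which exists by co-compactness. Since $\Phi$ is a bijection and $\bim=\bigsqcup_{x\in\alb}x\cdot G$, we get $\mathcal{X}=\bigcup_{x\in\alb}\bigl(\mathcal{X}\otimes x\bigr)$. Hence every $\zeta\in\mathcal{X}$ can be written as $\zeta=\eta_n\otimes x_n\cdots x_1\cdot g_n$ with $\eta_n\in K$, for every $n$. By properness and a compactness (diagonal) argument, we can choose these codings coherently, so that $g_n$ stabilizes and the words extend one another. Contraction then forces $\zeta=\Psi(\ldots x_2x_1\cdot g)$, because $\eta_n$ and $\zeta_0$ lie in a bounded set.

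\textbf{Step 3: fibres equal asymptotic equivalence.} This is the step I expect to be the main obstacle. Suppose first that the sequences are asymptotically equivalent, say $g_k\cdot x_k\cdots x_1 g=y_k\cdots y_1h$ with $g_k$ in a finite set $N$. Then
\[d\bigl(\zeta_0\otimes x_k\cdots x_1 g,\ \zeta_0\otimes y_k\cdots y_1 h\bigr)=d\bigl(\zeta_0\otimes x_k\cdots x_1g,\ (\zeta_0\cdot g_k)\otimes x_k\cdots x_1 g\bigr)\le C\lambda^{-k}\max_{s\in N}d(\zeta_0,\zeta_0\cdot s)\to0 .\]
For the converse, write $\Psi(\ldots x_1\cdot g)=\eta_k\otimes x_k\cdots x_1g$, where $\eta_k=\Psi(\ldots x_{k+2}x_{k+1}\cdot e)$. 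All such $\eta_k$, and likewise the $\eta'_k$ for the $y$-sequence, lie in a fixed bounded set, hence in a fixed compact set $K'$. If the two images coincide, then injectivity of $\Phi^{(k)}$ on $\mathcal{X}\otimes\bim^{\otimes k}$ gives $\eta_k\otimes x_k\cdots x_1g=\eta'_k\otimes y_k\cdots y_1h$. By the definition of the tensor product, this means there is $g_k\in G$ with $\eta'_k=\eta_k\cdot g_k^{-1}$ and $g_k\cdot x_k\cdots x_1g=y_k\cdots y_1 h$. Properness of the action yields that $\{g\colon K'g\cap K'\neq\varnothing\}$ is finite, so the $g_k$ range over a finite set, which is exactly asymptotic equivalence.

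Therefore $\Psi$ induces a continuous equivariant bijection $\limg\to\mathcal{X}$. It remains to check that this bijection is a homeomorphism. Both spaces are locally compact Hausdorff with proper co-compact actions. Restricting to the compact image of $\xmo\times F$, for $F\subset G$ finite and large, the map is a continuous bijection from a compact space onto a Hausdorff one, and these restrictions cover neighbourhoods of every point. So the inverse is continuous. The delicate points are the coherent choice of codings in Step 2 and the uniform bound placing all $\eta_k$ in one compact set in Step 3.
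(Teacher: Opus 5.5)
The paper gives no proof of this statement; it quotes \cite[Theorem~3.4.13]{Nek:book}. Your outline is the natural route and can be completed: a coding map $\Psi$ built by iterating $\Phi$, then surjectivity, then fibres equal to asymptotic equivalence classes, then compact-to-Hausdorff for the inverse. As written, however, two steps rely on the inference ``bounded, hence relatively compact'', which is not available here.

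The hypotheses do not force it. Take $G=\Z$ and $\bim=\{0\}\times\Z$, with $\Z$ acting on the right by translation and trivially on the left, so all sections and the nucleus are trivial. Let $\mathcal{X}=\Z$ with the metric $d(m,n)=1$ for $m\neq n$, and $\Phi(\xi\otimes(0,h))=h$. Every hypothesis holds, yet $\mathcal{X}$ is bounded and not compact. So in Step~3, putting the $\eta_k$ in a compact $K'$ ``because they lie in a bounded set'' is unjustified. In Step~2, the appeal to properness to make $g_n$ stabilize has no compact set to act on: you never show that the points $\zeta\cdot g_n^{-1}=\eta_n\otimes u_n$, with $u_n\in\alb^n$, stay in a fixed compact set.

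The missing ingredient is the tile $\mathcal{T}'=\Psi(\xmo\times\{e\})$. It is compact because you already proved $\Psi$ continuous and $\xmo$ is compact. Your Step~1 identity gives $\Psi(w\cdot e)\otimes u=\Psi(wu\cdot e)\in\mathcal{T}'$ for $w\in\xmo$ and $u\in\alb^n$.

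\begin{itemize}
\item In Step~3, $\eta_k,\eta'_k\in\mathcal{T}'$, so $\eta'_k\in\mathcal{T}'\cap\mathcal{T}'g_k^{-1}$ and properness leaves only finitely many possible $g_k$.
\item In Step~2, compare $\eta_n\otimes u_n$ with $\Psi(w\cdot e)\otimes u_n$. This gives $d(\zeta g_n^{-1},\mathcal{T}')\le C\lambda^{-n}\sup_{k\in K,\,t\in\mathcal{T}'}d(k,t)\to 0$, hence $\mathcal{X}=\overline{\mathcal{T}'\cdot G}$. Properness and local compactness make $\{\mathcal{T}'g\}_{g\in G}$ locally finite, so $\mathcal{T}'\cdot G$ is closed. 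Surjectivity follows with no diagonal argument.
\end{itemize}

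Two smaller repairs are also needed.

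\begin{itemize}
\item The limit defining $\Psi$ needs $\mathcal{X}$ to be complete, which is not assumed. It follows because local compactness, cocompactness and the isometric action give a uniform $\epsilon>0$ such that all closed $\epsilon$-balls are compact.
\item The equality $\mathcal{X}=\bigcup_{x\in\alb}(\mathcal{X}\otimes x)$ holds only after applying $G$ on the right; without that it would need self-replication. Your next sentence already includes $g_n$, so this slip is harmless.
\end{itemize}

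With these changes the rest of your argument goes through: equivariance, the conjugacy identity, the forward half of Step~3, and the compact-to-Hausdorff step, which uses the same local finiteness.
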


Consider a left action of $G$ on a topological space $M$. The space $\limg\otimes M$ is homeomorphic to the quotient of the space $\alb^{-\omega}\times M$ by the equivalence relation identifying $\ldots x_2x_1\cdot t_1$ with $\ldots y_2y_1\cdot t_2$ if there exists a sequence $s_n\in\nuke$ for $n\ge 0$, such that $s_n\cdot x_n=y_n\cdot s_{n-1}$, for all $n\ge 1$, and $s_0\cdot t_1=t_2$.

If the space $M$ is discrete and the action of $G$ on it is transitive, then $\limg\otimes M$ is homeomorphic to the quotient $\limg/G_x$ by the stabilizer of a point $x\in M$. The homeomorphism maps $\xi\otimes g\cdot x$ to the image of $\xi\cdot g$ in $\limg/G_x$. 

If we consider the trivial action of $G$ on a point, then we get the \emph{limit space} $\lims=\limg/G$, equal to the quotient of the space $\alb^{-\omega}$ of left-infinite sequences $\ldots x_{2}x_{1}$ by the asymptotic equivalence relation identifying two sequences $(x_{n})_{n\ge 1}$ and $(y_{n})_{n\ge 1}$ if there exists a sequence $s_n\in\nuke$ such that $s_n(x_{n}\cdots x_{1})= y_{n}\cdots y_{1}$  for all $n\ge 1$.

If we consider the action of $G$ on $\xo$ of right-infinite sequences over the alphabet $\alb$ (i.e., on the boundary of the tree $\xs$), then we get the \emph{limit solenoid} $\limg\otimes\xo$ equal to the quotient of the space $\alb^\Z=\alb^{-\omega}\times \alb^{\omega}$ of bi-infinite sequences $\ldots x_{-2}x_{-1}.x_0x_1\ldots$ by the asymptotic equivalence relation identifying two sequences $(x_n)_{n\in\Z}$ and $(y_n)_{n\in\Z}$ if there exists a sequence $s_n\in\nuke$ such that $s_n\cdot x_n=y_n\cdot s_{n-1}$ for all $n\in\Z$.

 We denote by $\sigma$ the 
shifts $\ldots x_2x_1\mapsto \ldots x_3x_2$ and $\ldots x_{-2}x_{-1}.x_0x_1\mapsto \ldots x_{-3}x_{-2}.x_{-1}x_0\ldots$ of $\xmo$ and $\alb^{\Z}$. They preserve the asymptotic equivalence relation, and thus descend to the quotient to a map
\[\mathsf{s}\colon \lims\to \lims\]
 and to a homeomorphism
\[\widehat{\mathsf{s}}\colon \limg\otimes\xo \to \limg\otimes\xo. \]
The map $\mathsf{s}$ is called the \emph{limit dynamical system}. It is shown in \cite[\S5.6]{Nek:book} that  the map $\widehat{\mathsf{s}}$ is  its \emph{natural extension}: namely the limit solenoid $\limg\otimes\xo$ is homeomorphic to the inverse limit of 
\[\lims \xleftarrow{\mathsf{s}}\lims\xleftarrow{\mathsf{s}}\cdots \]
and $\widehat{\mathsf{s}}$ is conjugate to the homeomorphism induced by $\mathsf{s}$ on the inverse limit.

For $\xi\in\alb^\omega$, denote $\leaf_\xi=\limg\otimes G(\xi)$, where $G(\xi)$ is the $G$-orbit of $\xi$ seen as a discrete set. We call it the \emph{leaf} of $\xi$. The inclusion $G(\xi)\subset\xo$ induces an injective continuous map from $\leaf_\xi$ to the limit solenoid. In particular, the asymptotic equivalence relation on $\alb^{-\omega}\times G(\xi)$ defining $\leaf_\xi$ is given by the same condition as for the limit solenoid. The space $\leaf_\xi$ is naturally homeomorphic to the space $\limg/G_\xi$.

If $(G, \bim)$ is self-replicating, then the path-connected components of the solenoid $\limg\otimes\xo$ are the images of the leaves $\leaf_\xi$ under the maps $\leaf_\xi\arr\limg\otimes\xo$, defined above. Each of the path-connected components is dense in the solenoid in the self-replicating case.

Recall that we denote by $[G, \xi]$ the set of germs $[g, \xi]$ of elements of $G$ at $\xi$, with the discrete topology. We denote by $\widetilde{\leaf}_\xi$ the space $\limg\otimes [G, \xi]$. It is identified with $\limg/G^0_\xi$. We call it the \emph{germ-leaf} of $\xi$.

\begin{prop}
\label{pr:leavesmap}
    Suppose that $G$ is self-replicating. Then the inverse of the shift $\ldots x_{-2}x_{-1}. x_0x_1\ldots\mapsto \ldots x_{-1}x_0. x_1x_2\ldots$ induces a homeomorphism $\leaf_{x_0x_1\ldots}\arr\leaf_{x_1x_2\ldots}$.

    Similarly, the map $\ldots x_{-2}x_{-1}.[g, x_0x_1\ldots]\mapsto \ldots x_{-1}g(x_0).[g|_{x_0}, x_1x_2\ldots]$ induces a homeomorphism $\leaftilde_{x_0x_1\ldots}\arr\leaftilde_{x_1x_2\ldots}$.
\end{prop}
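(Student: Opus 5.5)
The plan is to verify directly that the shift maps preserve the defining equivalence relations, so that they descend to well-defined continuous maps between the quotient spaces, and then to exhibit continuous inverses.

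\emph{First statement.} The leaf $\leaf_{x_0x_1\ldots}$ is the quotient of $\alb^{-\omega}\times G(\xi)$, with $\xi=x_0x_1\ldots$, by the asymptotic equivalence relation. Every point of $G(\xi)$ is again a right-infinite sequence, so a point of the leaf is represented by a bi-infinite sequence $\ldots y_{-2}y_{-1}.y_0y_1\ldots$ whose right half lies in $G(\xi)$. The inverse shift sends this sequence to $\ldots y_{-1}y_0.y_1y_2\ldots$.

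We first check that the target lies in the right leaf. If $y_0y_1\ldots=g(\xi)$, then $y_0=g(x_0)$ and $y_1y_2\ldots=g|_{x_0}(x_1x_2\ldots)$. By self-replication, $g|_{x_0}$ ranges over all of $G$ as $g$ ranges over $G_{x_0}$. Hence the image is exactly $\alb^{-\omega}\times G(x_1x_2\ldots)$, and the map is surjective on representatives.

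Next, the relation is preserved. It is described by sequences $s_n\in\nuke$ with $s_n\cdot x_n=y_n\cdot s_{n-1}$ for all $n\in\Z$, and this condition is invariant under reindexing $n\mapsto n+1$. Therefore the inverse shift descends to a bijection of quotients, and its inverse is induced by the shift. Continuity of both maps follows because the map on representatives is continuous and the quotient maps are quotient maps. Strictly, the topology on the leaf is that of $\limg\otimes G(\xi)$, with $G(\xi)$ discrete. A sequence converging in the leaf lifts locally to representatives whose right halves lie in finitely many orbit points, and after shifting these still lie in finitely many orbit points. Alternatively, one can use the identification $\leaf_\xi\cong\limg/G_\xi$ together with the $G$-equivariant homeomorphism $\limg\otimes\bim\to\limg$, which gives $\limg\otimes G(\xi)\cong\limg\otimes\bim\otimes G(\xi)$. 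Here $\bim\otimes G(\xi)$ is identified with $G(x_1x_2\ldots)$ via $x\cdot h\otimes g(\xi)\mapsto h g(\xi)$ after choosing appropriate normal forms.

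This second viewpoint is cleaner, and it is the one I would actually write. The map $\bim\otimes [G,x_0x_1\ldots]\to[G,x_1x_2\ldots]$ is given by $x\cdot h\otimes[g,x_0\ldots]\mapsto [h g|_{x_0},x_1\ldots]$ when $g(x_0)=x$. It is a bijection of left $G$-sets: transitivity of both sides follows from self-replication, and equality of stabilizers is checked via sections.

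\emph{Second statement.} For germ-leaves, with $\widetilde{\leaf}_\xi=\limg\otimes[G,\xi]$, the same argument applies. The map sends $[g,x_0x_1\ldots]$ to the pair $(g(x_0),[g|_{x_0},x_1\ldots])$. One checks that this is well defined on germs: if $g,g'$ agree near $\xi$, then $g(x_0)=g'(x_0)$ and $g|_{x_0},g'|_{x_0}$ agree near $x_1x_2\ldots$. The nucleus relation $s_n\cdot x_n=y_n\cdot s_{n-1}$ then transports to the shifted indices, with $s_0$ acting on germs.

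\emph{Main obstacle.} The main obstacle is the bookkeeping showing that the relation is preserved at the boundary index $n=0$, where the group element $s_0$ gets absorbed into the letter $x_0$ via the section. Using the biset identity $s_0\cdot x_0 = s_0(x_0)\cdot s_0|_{x_0}$ handles this. Bijectivity then follows from Theorem~\ref{th:uniquenessoflimg}-style naturality of $\limg\otimes\bim\cong\limg$.
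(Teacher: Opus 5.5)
Your direct argument is correct and is the paper's proof. Self-replication makes the inverse shift a bijection $\xmo\times G(x_0x_1\ldots)\to\xmo\times G(x_1x_2\ldots)$, and, via $[g,x_0x_1\ldots]\mapsto(g(x_0),[g|_{x_0},x_1x_2\ldots])$, a bijection $\xmo\times[G,x_0x_1\ldots]\to\xmo\times[G,x_1x_2\ldots]$. The nucleus relation is invariant under reindexing.

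Three small points on this part:
\begin{itemize}
\item For surjectivity you need the full self-replication property: transitivity of $G$ on $\bim$, i.e.\ for every $z\in\alb$ and $h\in G$ there is $g$ with $g(x_0)=z$ and $g|_{x_0}=h$. Surjectivity of $G_{x_0}\to G$ alone is not enough.
\item In the germ case, the identity needed at the last index is $s_0\cdot g(x_0)=s_0(g(x_0))\cdot s_0|_{g(x_0)}$, not the one at $x_0$. You also need $s_0|_{g(x_0)}\,g|_{x_0}=(s_0g)|_{x_0}$ and closure of $\nuke$ under sections.
\item Continuity is cleanest as a slice-by-slice statement, with orbits discrete: the map sends $\xmo\times\{zw\}$ homeomorphically onto $\xmo z\times\{w\}$, so it descends to a homeomorphism. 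The paper leaves this implicit.
\end{itemize}

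The tensor-product version you say you would actually write is wrong as stated.
\begin{itemize}
\item $\bim\otimes G(x_0x_1\ldots)$ is not $G(x_1x_2\ldots)$ but $G(y\,x_0x_1\ldots)$: tensoring with $\bim$ prepends a letter, it does not remove one.
\item Your map $x\cdot h\otimes g(\xi)\mapsto hg(\xi)$ lands in $G(\xi)$ and is not $G$-equivariant, since $f\cdot(x\cdot h)=f(x)\cdot f|_xh$.
\item Your germ map $\bim\otimes[G,x_0x_1\ldots]\to[G,x_1x_2\ldots]$ is only defined when $g(x_0)=x$, so it is not a map on the whole tensor product.
\end{itemize}

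The correct version runs the other way. Self-replication gives $G$-set isomorphisms
\begin{itemize}
\item $\bim\otimes G(x_1x_2\ldots)\cong G(x_0x_1\ldots)$, given by $x\cdot h\otimes w\mapsto x\,h(w)$;
\item $\bim\otimes[G,x_1x_2\ldots]\cong[G,x_0x_1\ldots]$, given by $x\cdot h\otimes[g,x_1x_2\ldots]\mapsto[f,x_0x_1\ldots]$ for any $f$ with $f\cdot x_0=x\cdot hg$.
\end{itemize}
Then $\leaf_{x_0x_1\ldots}\cong\limg\otimes\bim\otimes G(x_1x_2\ldots)\to\limg\otimes G(x_1x_2\ldots)=\leaf_{x_1x_2\ldots}$, where the arrow is the natural homeomorphism $\limg\otimes\bim\to\limg$ tensored with the orbit. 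On sequences the composite is $\ldots z_2z_1.x\,w\mapsto\ldots z_2z_1x.w$, which is exactly the inverse shift; the same works for germ-leaves.

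Once corrected, this is a genuinely different route from the paper's. Self-replication is used only in a combinatorial $G$-set isomorphism, and well-definedness and the homeomorphism property come for free from the natural homeomorphism. The paper, and your first argument, instead check the asymptotic equivalence by hand. Note also that bijectivity comes from this natural homeomorphism, not from Theorem~\ref{th:uniquenessoflimg}.
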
    

\begin{proof}
It follows from the transitivity of the left action of $G$ on $\bim^{\otimes n}$ that the $G$-orbit of a sequence $\xi=x_1x_2\ldots$ is equal to the set of sequences $y_1y_2\ldots$ such that $y_{n+1}y_{n+2}\ldots$ belongs to the $G$-orbit of $x_{n+1}x_{n+2}\ldots$. Similarly, the set of germs of elements of $G$ at $\xi$ is equal to the set of germs of the transformations
\[x_1x_2\ldots x_na_{n+1}a_{n+2}\ldots\mapsto y_1y_2\ldots y_n g(a_{n+1}a_{n+2}\ldots)\]
for $g\in G$. Consequently, the inverse of the shift induces a bijection between $\xmo\times G(x_1x_2\ldots)$ and $\xmo\times G(x_2x_3\ldots)$ and a bijection between $\xmo\times [G, x_1x_2\ldots]$ and $\xmo\times [G, x_2x_3\ldots]$. It is easy to see that the asymptotic equivalence relations are preserved by the shift and its inverse, which finishes the proof.\qedhere\end{proof}

\subsection{Tiles}

From now on, we assume that our contracting group $(G, \bim)$ is self-replicating and finitely generated. As above we denote by $\sigma$ the 
shifts $\ldots x_2x_1\mapsto \ldots x_3x_2$ and $\ldots x_{-2}x_{-1}.x_0x_1\mapsto \ldots x_{-3}x_{-2}.x_{-1}x_0\ldots$ of $\xmo$ and $\alb^{\Z}$ as well as the maps they induce on quotient spaces (in particular, the homeomorphisms between the leaves described in Proposition~\ref{pr:leavesmap}).

We denote by $\sigma^{-1}$ the shift $x_1x_2\ldots\mapsto x_2x_3\ldots$ on $\xo$. For $\xi\in\xo$, we denote by $\sigma(\xi)$ an arbitrary preimage of $\xi$ under the shift $\sigma^{-1}$, i.e., any sequence $x\xi$ for $x\in\alb$. Since we assume that the action is self-replicating, the graphs $\Gamma_{\sigma(\xi)}, \Gammatilde_{\sigma(\xi)}$ and the spaces $\leaf_{\sigma(\xi)}$, $\leaftilde_{\sigma(\xi)}$ will not depend on the particular choice of $\sigma(\xi)$.

Denote by $\til$ the image of the set $\alb^{-\omega}\cdot 1$ of sequences ending with the identity element of $G$ in $\limg$. Then $\limg=\bigcup_{g\in G}\til\cdot g$. The space $\til$ is naturally homeomorphic to the quotient of $\xmo$ by the equivalence relation identifying $\ldots x_2x_1$ with $\ldots y_2y_1$ if there exists a sequence $s_n\in\nuke$, $n\ge 0$, such that $s_n\cdot x_n=y_n\cdot s_{n-1}$ and $s_0=1$.

If $M$ is a left $G$-space, then the space $\limg\otimes M$ is equal to the union of the \emph{tiles} $\til\otimes t$ for $t\in M$. 

Recall that, since we assume that $G$ is self-replicating and finitely generated, the nucleus $\nuke$ is a generating set of $G$. 
The first paragraph of the next proposition is proved in the same way as~\cite[Proposition~3.3.5]{Nek:book}. The second paragraph is straightforward.

\begin{prop}
\label{prop:tiles}
Let $M$ be a discrete left $G$-set. Two tiles $\til\otimes x, \til\otimes y$, for $x, y\in M$, intersect if and only if there exists $g\in\nuke$ such that $y=g(x)$.

For any subset $A\subset M$ the subspace $\bigcup_{x\in A}\til\otimes x$ depends only on the sub-graph of the graph of the action of $G$ on $M$ (with respect to the generating set $\nuke$) spanned by $A$. Namely, it is the quotient of the space $\xmo\times A$ by the equivalence relation identifying $\ldots x_2x_1\otimes x$ with $\ldots y_2y_1\otimes y$, for $x, y\in A$, if and only if there exists a sequence $s_n\in\nuke$, $n\ge 0$, such that $s_n\cdot x_n=y_n\cdot s_{n-1}$ and $s_0(x)=y$.
\end{prop}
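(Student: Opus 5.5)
The first assertion follows from the description of $\limg\otimes M$ as a quotient of $\xmo\times M$, adapting \cite[Proposition~3.3.5]{Nek:book}. A point of $\til\otimes x\cap\til\otimes y$ is represented both by $\ldots x_2x_1\otimes x$ and by $\ldots y_2y_1\otimes y$. By the description of the asymptotic equivalence recalled above, these two sequences are equivalent if and only if there is a sequence $s_n\in\nuke$ with $s_n\cdot x_n=y_n\cdot s_{n-1}$ for $n\ge1$ and $s_0(x)=y$. In particular $s_0\in\nuke$ and $y=s_0(x)$, which gives the ``only if'' direction.

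For the converse, suppose $y=g(x)$ with $g\in\nuke$. By minimality of the nucleus, every element of $\nuke$ is a section of an element of $\nuke$, i.e.\ for every $h\in\nuke$ there exist $x'\in\alb$ and $h'\in\nuke$ with $h'|_{x'}=h$. Starting from $s_0=g$, I will build inductively $s_n\in\nuke$ and letters $x_n$ with $s_n\cdot x_n=y_n\cdot s_{n-1}$, where $y_n=s_n(x_n)$. This is a backward-infinite chain, so I will argue by compactness: for each $N$ a finite chain of length $N$ exists, the set of chains is finite at each length, and König's lemma gives an infinite chain. The sequences $\ldots x_2x_1\otimes x$ and $\ldots y_2y_1\otimes y$ are then equivalent, so the two tiles intersect.

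The second assertion is immediate from the same quotient description. The equivalence relation restricted to $\xmo\times A$ only involves pairs $(x,y)\in A^2$ with $y=s_0(x)$ and $s_0\in\nuke$, that is, edges of the $\nuke$-labelled graph spanned by $A$. The remaining conditions $s_n\cdot x_n=y_n\cdot s_{n-1}$ do not depend on $M$. I also need the union of tiles in $\limg\otimes M$ to carry the quotient topology from $\xmo\times A$. This holds because $\xmo\times\{x\}\to\til\otimes x$ is a closed map from a compact space, and the union is locally finite since each tile meets only finitely many others by the first part.

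The main obstacle is the converse direction of the first assertion, namely the existence of an infinite backward chain of nucleus elements ending at $g$. I expect the König-lemma argument, using that every nucleus element is a section of a nucleus element, to handle it.
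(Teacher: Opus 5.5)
Your proof is correct and matches the paper's route: the paper proves the first part ``in the same way as'' \cite[Proposition~3.3.5]{Nek:book} and calls the second part straightforward. In that argument, ``only if'' comes from $s_0\in\nuke$, and ``if'' comes from a left-infinite chain in $\nuke$ ending at $g$, which exists because every element of $\nuke$ is a section of an element of $\nuke$. There are two minor points to fix.

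First, König's lemma is unnecessary. Since every $s_{n-1}\in\nuke$ has a predecessor in $\nuke$, you can define $s_1,s_2,\ldots$ directly by recursion.

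Second, ``each tile meets only finitely many others'' does not by itself make the family $\{\til\otimes x\}$ locally finite in the topological sense. The singletons $\{1/n\}$ and $\{0\}$ in $\mathbb{R}$ show this: each meets no other, yet they accumulate at $0$. What you need is that the finitely many tiles meeting $\til\otimes x$ cover a neighbourhood of it. This follows, for example, from properness of the $G$-action on $\limg$. The issue only matters when $A$ is infinite.
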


The natural homeomorphism of $\limg\otimes\bim^{\otimes n}\arr\limg$ induces a homeomorphism of \[\limg\otimes (\bim^{\otimes n}\otimes M)\cong (\limg\otimes\bim^{\otimes n})\otimes M\arr\limg\otimes M.\]

The images of the tiles $\til\otimes v\otimes t$ of $\limg\otimes\bim^{\otimes n}\otimes M$ under these homeomorphisms are called the \emph{$n$th level tiles} of $\limg\otimes M$. In particular, the tiles $\til\otimes t$ for $t\in M$ are tiles of the $0$th level.

The tile $\til\otimes v\otimes t$, for $v\in\xs$  and $t\in M$, is the image in $\limg\otimes M$ of the set $\xmo v\cdot t\subset\xmo\times M$. In particular, each tile $\til\otimes v\otimes t$ of level $n$ is equal to the union $\bigcup_{x\in\alb}\til\otimes xv\otimes t$ of tiles of level $n+1$.

The homeomorphisms $\sigma\colon\leaf_{\sigma^{-1}(\xi)}\arr\leaf_{\xi}$ and $\sigma\colon\leaftilde_{\sigma^{-1}(\xi)}\arr\leaftilde_\xi$ map tiles of the level $n$ homeomorphically to tiles of the level $n-1$. Therefore, we can define the tiles of negative levels as the images of the tiles of the zeroth level under iterations of the shift.

A tile of level $-n$ of $\leaf_{x_1x_2\ldots}$ is the set \[\bigcup_{a_1a_2\ldots a_n\in\alb^n}\til\otimes a_1a_2\ldots a_ng(x_{n+1}x_{n+2}\ldots)\] for a fixed $g\in G$. Similarly, a tile of level $-n$ of $\leaftilde_{x_1x_2\ldots}$ is the set \[\bigcup_{a_1a_2\ldots a_n\in\alb^n}\til\otimes \gamma_{a_1a_2\ldots a_n},\] where $\gamma_{a_1a_2\ldots a_n}$ is the germ at $x_1x_2\ldots$ of the homeomorphism \[x_1x_2\ldots x_n\xo\arr a_1a_2\ldots a_n\xo\colon x_1x_2\ldots x_nw\mapsto a_1a_2\ldots a_ng(w)\] for a fixed $g\in G$. (The facts that these germs belong to the set of germs of $G$ and that $a_1a_2\ldots a_nx_{n+1}x_{n+2}\ldots$ belongs to the $G$-orbit of $x_1x_2\ldots$ follow from the condition that $(G, \bim)$ is self-replicating.)

Recall from \S\ref{s-preliminaries-contracting} that we denote by $\Xi_n$ the graph of the left action of $G$ on $\bim^{\otimes n}$ with respect to the generating set $\nuke$. It is isomorphic to the Schreier graph $\Gamma_{G/G_w^0}$ for any $w\in\bim^{\otimes n}$. It follows from Proposition~\ref{prop:tiles} that $\Xi_n$ can be seen as the adjacency graph of the tiles of the $n$th level of $\limg$.

Recall also that we denote by $\Gamma_n$ the graph of the action of $G$ on the $n$th level $\alb^n$ of the tree $\xs$, i.e., the Schreier graph of $G$ modulo $G_w$ for any $w\in\bim^{\otimes n}$. It is interpreted as the adjacency graph of the $n$th level tiles of $\lims$.

The orbital graph $\Gamma_\xi$ and the graph of germs $\Gammatilde_\xi$ of a point $\xi\in\xo$ are the adjacency graphs of the $0$th level tiles of the leaves $\leaf_\xi$ and $\leaftilde_\xi$, respectively. The adjacency graphs of the tiles of the $n$th level of $\leaf_\xi$ and $\leaftilde_\xi$ are $\Gamma_{\sigma^n(\xi)}$ and $\Gammatilde_{\sigma^n(\xi)}$, respectively.

Since the diameters of the tiles of the $n$th level exponentially go to zero as $n\to\infty$, we get that, informally, $\limg$ and $\lims$ are scaling limits of the graphs $\Xi_n$ and $\Gamma_n$, respectively. 

Recall also from \S\ref{s-preliminaries-trees} that the inclusions $G_{x_1x_2\ldots}\le G_{x_1x_2\ldots x_n}$ and $G_{x_1x_2\ldots x_n}^0\le G_{x_1x_2\ldots}^0$ induce covering maps of labeled directed graphs $\Gamma_{x_1x_2\ldots}\arr\Gamma_n$ and $\Xi_n\arr\Gammatilde_{x_1x_2\ldots}$. Explicitly the map $\Gamma_{x_1x_2\ldots}\arr\Gamma_n$ acts on the vertices by $y_1y_2\ldots\mapsto y_1y_2\ldots y_n$. The map $\Xi_n\arr\Gammatilde_{x_1x_2\ldots}$ acts on the vertices by $y_1y_2\ldots y_n\cdot g\mapsto [h, x_1x_2\ldots]$, where $h\in G$ is any element such that $h(x_1x_2\ldots x_n)=y_1y_2\ldots y_n$ and $h|_{x_1x_2\ldots x_n}=g$. By Proposition \ref{pr:convergenceofgraphs}, the injectivity radius of these covering maps tends to $\infty$ with $n$.

These maps induce in turn continuous surjective maps $\leaf_{x_1x_2\ldots}\arr\lims$ and $\limg\arr\leaftilde_{x_1x_2\ldots}$ (that depend on $n$), given by the rules obtained by appending left-infinite sequences to the above formulas for the maps on the vertices of the graphs.

Let us denote by $\mathcal{U}_n(\xi)$ the union of the $n$th level tiles containing a point $\xi$. The following is proved in the same way as~\cite[Proposition~3.4.1]{Nek:book}.

\begin{prop}
\label{pr:basisofnbhd}
    Let $M$ be a discrete left $G$-space. Then for every $\xi\in\limg\otimes M$, the set $\{\mathcal{U}_n(\xi)\}_{n\ge 0}$ is a basis of neighborhoods of $\xi$ in $\limg\otimes M$.
\end{prop}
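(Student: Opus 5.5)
The plan is to reduce to the case $M=\bim^{\otimes 0}$-style orbits via the shift structure, but the most direct route is to imitate \cite[Proposition~3.4.1]{Nek:book}. Two things need to be shown: each $\mathcal{U}_n(\xi)$ is a neighbourhood of $\xi$, and the sets $\mathcal{U}_n(\xi)$ shrink to $\xi$.

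\textbf{Shrinking.} The $n$th level tiles of $\limg\otimes M$ are images of $0$th level tiles $\til\otimes t$ under the maps induced by $\zeta\mapsto\zeta\otimes w$, $w\in\bim^{\otimes n}$. These maps are uniform contractions with constants $CL^{-n}$ for the $G$-invariant metric on $\limg$. Since $\til$ is compact, level-$n$ tiles have diameter at most $C'L^{-n}$ for a suitable metric on $\limg\otimes M$. For discrete transitive $M$ this space is $\limg/G_x$, and the quotient metric works. For general $M$ I would argue orbit by orbit, as $\limg\otimes M$ is the disjoint union over $G$-orbits, each of which is open. Hence $\mathcal{U}_n(\xi)$ lies in the ball of radius $2C'L^{-n}$ around $\xi$. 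Even without a global metric, every neighbourhood of $\xi$ contains some $\mathcal{U}_n(\xi)$. To see this, pass to the quotient description of $\xmo\times M$: if some $\mathcal{U}_n(\xi)$ escaped a neighbourhood for all $n$, a compactness argument would produce a representative sequence $\ldots y_2y_1\cdot t'$ of a point $\neq\xi$ that agrees with a representative of $\xi$ (up to nucleus corrections) on arbitrarily long suffixes. By the defining relation this point is asymptotically equivalent to $\xi$, a contradiction.

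\textbf{Neighbourhood.} I will show that $\mathcal{U}_n(\xi)$ contains an open set around $\xi$. The key point is local finiteness: only finitely many level-$n$ tiles meet any given level-$n$ tile, by Proposition~\ref{prop:tiles} applied to $\bim^{\otimes n}\otimes M$, since adjacency is governed by the finite nucleus. Consequently the union $F$ of the level-$n$ tiles not containing $\xi$ is closed. To justify this, I take a compact neighbourhood made from the finitely many tiles adjacent to the tiles containing $\xi$ (a union of finitely many compact sets). Inside it, $F$ is a finite union of compact sets. Outside the tiles adjacent to $\xi$'s tiles, the point $\xi$ is separated by the same finite-adjacency argument. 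Then $\limg\otimes M\setminus F$ is an open set containing $\xi$ and contained in $\mathcal{U}_n(\xi)$, because the tiles cover the space.

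\textbf{Main obstacle.} The main obstacle is the closedness of an infinite union of tiles. I would handle it by showing that the tiles form a locally finite cover, i.e.\ every point has a neighbourhood meeting only finitely many level-$n$ tiles. For this I take $\mathcal{U}_{n+k}(\xi)$ for $k$ large, using the contraction: a level-$(n+k)$ tile lies inside a level-$n$ tile, and only nucleus-adjacent tiles can meet. This yields the local finiteness needed, and completes the proof that $\{\mathcal{U}_n(\xi)\}_{n\ge 0}$ is a basis of neighbourhoods.
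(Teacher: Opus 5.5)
Your overall plan matches the argument of \cite[Proposition~3.4.1]{Nek:book}, to which the paper defers. Level-$n$ tiles shrink by the uniform contraction, and $\mathcal{U}_n(\xi)$ is a neighbourhood because level-$n$ tiles form a locally finite cover by compact sets. Your shrinking half is fine: the projection $\limg\to\limg/G_x$ is $1$-Lipschitz for the quotient metric, and your compactness alternative in $\xmo\times M$ also works.

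The gap is in the neighbourhood half. Proposition~\ref{prop:tiles} gives only \emph{combinatorial} finiteness: each tile meets at most $|\nuke|$ tiles. This does not imply that the union $F$ of level-$n$ tiles not containing $\xi$ is closed. For example, in $\R$ the compact sets $\{0\}$ and $[1/(k+1),1/k]$, $k\ge 1$, each meet at most two others. Yet the union of those not containing $0$ is $(0,1]$, and the union of those containing $0$ is $\{0\}$, which is not a neighbourhood. Your two attempts to upgrade to topological local finiteness are circular:
\begin{itemize}
\item declaring the union of the tiles adjacent to $\xi$'s tiles a compact \emph{neighbourhood} of $\xi$;
\item using $\mathcal{U}_{n+k}(\xi)$ as a neighbourhood meeting finitely many tiles.
\end{itemize}
Both assume the statement being proved, at another level or for a slightly larger union. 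The contraction only gives the opposite direction: $\mathcal{U}_m(\xi)$ is small, not that it contains an open set around $\xi$.

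What is missing is a genuinely topological input. There are two ways to supply it.

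\emph{Properness.} Use properness of the action of $G$ on the locally compact space $\limg$, which is recalled in the paper. For a compact neighbourhood $K$ of a point of $\limg$, only finitely many $\til\cdot g$ meet $K$, since $\til\cup K$ is compact. Push $K$ forward by the open quotient map $\limg\to\limg/G_x\cong\limg\otimes M$, orbit by orbit. If the image of $\til\cdot g$ meets the image of $K$, then $\til\cdot gh$ meets $K$ for some $h\in G_x$, so only finitely many image tiles occur. This gives a neighbourhood meeting finitely many $0$th level tiles. The same holds at level $n$, because each $n$th level tile $\til\otimes v\otimes t$ lies inside the $0$th level tile $\til\otimes t$.

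\emph{Quotient topology.} Alternatively, argue directly in the quotient $\xmo\times M\to\limg\otimes M$. Since $M$ is discrete, it suffices to show that each slice $\{w\in\xmo : [w\cdot t]\in F\}$ is closed. A point $[w\cdot t]$ can lie only in tiles $\til\otimes t'$ with $t'\in\nuke(t)$. So the slice is a finite union of sets $\{w : \exists z,\ w\cdot t\sim z\cdot t'\}$. Each of these is the projection of a closed subset of the compact space $\xmo\times\xmo\times\nuke^{\N}$, hence closed. For level $n$, apply this to $\bim^{\otimes n}\otimes M$ and transport it through the homeomorphism onto $\limg\otimes M$.

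With either fix, $(\limg\otimes M)\setminus F$ is an open set containing $\xi$ and contained in $\mathcal{U}_n(\xi)$, and your argument goes through.
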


As a corollary of Propositions~\ref{prop:tiles},~\ref{pr:convergenceofgraphs}, and~\ref{pr:basisofnbhd}, we get the following.

\begin{prop}
    \label{pr:localhomeomorphism}
For every $\xi\in\xo$, every point of $\leaftilde_\xi$ has an open neighborhood homeomorphic to an open subset of $\limg$. Every point of $\limg$ has an open neighborhood homeomorphic to an open subset of $\leaftilde_\xi$ for some $\xi$.

The same statements are true for the spaces $\leaf_\xi$ and $\lims$.
\end{prop}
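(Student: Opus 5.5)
We need to prove Proposition \ref{pr:localhomeomorphism}: local homeomorphisms between $\leaftilde_\xi$ and $\limg$ (and between $\leaf_\xi$ and $\lims$).

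The plan is to compare finite unions of tiles in both spaces through the covering maps $\Xi_n\to\Gammatilde_\xi$ (respectively $\Gamma_\xi\to\Gamma_n$). By the second paragraph of Proposition~\ref{prop:tiles}, the space $\bigcup_{x\in A}\til\otimes x$ is determined by the labelled subgraph spanned by $A$. Consequently, an isomorphism of labelled subgraphs (with respect to the generating set $\nuke$) induces a homeomorphism between the corresponding unions of tiles.

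For the first statement, fix $\zeta\in\leaftilde_\xi$ and a $0$th level tile $\til\otimes\gamma$ containing it, with $\gamma\in[G,\xi]$. Translating by $G$ if needed, we may take $\gamma=[e,\xi]$ as the root. The set $\mathcal{U}_0(\zeta)$ is a union of tiles indexed by vertices adjacent to $\gamma$, and it is a neighbourhood of $\zeta$ by Proposition~\ref{pr:basisofnbhd}. Now take $R=3$ and choose $n$ by Proposition~\ref{pr:convergenceofgraphs} so that the covering $\Xi_n\to\Gammatilde_\xi$ is an isomorphism on $R$-balls at the basepoints.

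The natural homeomorphism $\limg\otimes\bim^{\otimes n}\to\limg$ identifies $\limg$ with a union of tiles $\til\otimes w$, $w\in\bim^{\otimes n}$, whose adjacency graph is $\Xi_n$ by Proposition~\ref{prop:tiles}. Applying Proposition~\ref{prop:tiles} to both $M=\bim^{\otimes n}$ and $M=[G,\xi]$, the isomorphism of $2$-balls yields a homeomorphism $\Phi$ from $B=\bigcup_{x\in B_{\Xi_n}(\cdot,2)}\til\otimes x$ onto the analogous union in $\leaftilde_\xi$.

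The remaining point is to get an open set. The set $\mathcal{U}_0(\zeta)$ lies in the union over the $1$-ball. Its interior $W$ is a neighbourhood of $\zeta$ whose preimage is open in $\limg$. The reason is that any point of the $1$-ball union lies only in tiles indexed within the $2$-ball, so the local structure, namely which tiles meet near that point, is seen entirely inside the $2$-ball. Neighbourhoods are then given by Proposition~\ref{pr:basisofnbhd} applied to unions of tiles, which are finite and closed, hence compact. Thus $\Phi^{-1}(W)$ is open in $\limg$ and homeomorphic to $W$.

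The converse direction works symmetrically: starting with $\zeta\in\limg$ lying in a tile $\til\otimes w$, we use the covering $\Xi_n\to\Gammatilde_\xi$ for any $\xi$ beginning with the appropriate prefix, so that $w$ corresponds to the basepoint. The leaf case $\leaf_\xi$ versus $\lims$ is identical, using $\Gamma_\xi\to\Gamma_n$ and $\lims=\limg\otimes\bim^{\otimes n}/G$.

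The main obstacle is the openness claim. The plan is to check that for a point $p$ in a union $\bigcup_{x\in A}\til\otimes x$ with $A\supset B(x_0,1)$, if $p$ lies in a tile with index $x_0$ adjacent to nothing outside $A$, then $\mathcal{U}_0(p)$ is contained in that union. This holds by the intersection criterion in the first paragraph of Proposition~\ref{prop:tiles}, since all tiles through $p$ meet $\til\otimes x_0$ and are therefore indexed by neighbours of $x_0$. Having $R$ large enough makes this hold uniformly.
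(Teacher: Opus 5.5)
Your first half is the argument the paper has in mind: it states the proposition as a direct corollary of Propositions~\ref{prop:tiles}, \ref{pr:convergenceofgraphs} and~\ref{pr:basisofnbhd}, and your version is correct. Taking $R=3$ ensures that the labelled subgraphs induced on the $2$-balls agree, and your openness argument via the intersection criterion is right. One minor point: $G$ does not act on $\leaftilde_\xi$, so instead of ``translating'' you should either replace $\xi$ by $g(\xi)$, or apply Proposition~\ref{pr:convergenceofgraphs} with radius $R+|g|$.

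The gap is the claim that the converse ``works symmetrically''. In the first half the ray $\xi$ is fixed, and Proposition~\ref{pr:convergenceofgraphs} then supplies $n$. In the converse the point $\zeta=\ldots x_2x_1\cdot g\in\limg$ is fixed, and its level-$n$ tile is $\til\otimes(x_n\cdots x_1\cdot g)$. These words grow by adding letters on the \emph{left}, so in general they are not prefixes of a single ray. Once $n$ is fixed, choosing $\xi=x_n\cdots x_1\theta$ gives no control over injectivity of $\Xi_n\to\Gammatilde_\xi$ near $w$, because that proposition only makes the injectivity radius grow along a fixed ray.

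In fact injectivity can fail for every choice. Take $g=1$ and $w=x_n\cdots x_1\cdot 1$. By the asymptotic-equivalence criterion, each $f\in\operatorname{Stab}_G(\zeta)$ equals $s|_{x_n\cdots x_1}$ for some $s\in\nuke$ fixing $x_n\cdots x_1$. Hence $s\cdot w=w\cdot f\neq w$ when $f\neq 1$. On the other hand, $s\cdot w$ and $w$ both map to $[1,\xi]$ as soon as $f\in G^0_\theta$. So already injectivity on the $1$-ball requires some $\theta$ with $\operatorname{Stab}_G(\zeta)\cap G^0_\theta=\{1\}$, and such a $\theta$ need not exist.

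For a counterexample, take $\alb=\{0,1\}$ and the group generated by $a=(0\,1)$, $f_1=(a,1)$, $f_2=(1,a)$, $g_1=(g_1,f_1)$, $g_2=(g_2,f_2)$. It is a bounded automaton group, hence contracting, and it is self-replicating. For $\zeta=\ldots0001\cdot 1$ one has $f_1,f_2\in\operatorname{Stab}_G(\zeta)$, while $f_1$ acts trivially on $1\alb^\omega$ and $f_2$ acts trivially on $0\alb^\omega$. So every $\theta$ is bad, for every $n$.

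The limiting procedure from the proof of Theorem~\ref{th:endsandcutpoints} does not rescue this either. It yields $\Gamma_{G/H}$ with only $G^0_\eta\le H\le G_\eta$ (Lemma~\ref{l-semi-continuous}), i.e.\ an open subset of $\limg/H$, not of $\leaftilde_\eta=\limg/G^0_\eta$.

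What the second assertion needs is an extra step: produce some other point $p$ and ray $\xi$ whose local isotropy $\operatorname{Stab}_G(p)\cap G^0_\xi$ reproduces the local structure at $\zeta$. In the example this works with $p=\ldots000\cdot1$ and $\xi=0^\infty$: a neighbourhood of $\zeta$ is homeomorphic to one of $p$, and $\operatorname{Stab}_G(p)=\{1,g_1,g_2,g_1g_2\}$ meets $G^0_{0^\infty}$ trivially. Your argument contains no such step, and the same issue affects your $\leaf_\xi$/$\lims$ converse. Only the first assertion is used later in the paper, for $D_2\le D_3$ in Theorem~\ref{th:endsandcutpoints}.
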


Let $(H, \bim)$ be a contracting self-similar group, and let $\alb$ be a basis of $\bim$. Let $K$ be the kernel of the action of $H$ on $\xs$. Let $\pi\colon H\arr G=H/K$ be the natural epimorphism. The map $x\mapsto x\cdot K$ from $\alb$ to $\bim/K$ is injective, and its image is a basis of the $G$-biset $\bim/K$. We will identify $x\in\bim$ with $x\cdot K\in\bim/K$ and write $x\cdot K$ simply as $x$. Then the map $\pi$ agrees with the actions of $H$ and $G$ on $\xs$.

We get a natural continuous map $\limg[H]\arr\limg$ given by $\ldots x_2x_1\cdot g\mapsto\ldots x_2x_1\cdot\pi(g)$. In fact, $\limg$ is naturally homeomorphic to $\limg[H]/K$, and the map $\limg[H]\arr\limg$ is the quotient map $\limg[H]\arr\limg[H]/K$.

\begin{prop}
\label{pr:limgH}
 Suppose that the image in $G=H/K$ of every non-trivial element of the nucleus of $H$ is non-trivial. Then the natural map $\limg[H]\arr\limg\cong\limg[H]/K$ is a covering.
\end{prop}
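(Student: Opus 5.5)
We must show that $q\colon\limg[H]\arr\limg[H]/K$ is a covering. The natural route is properness: it suffices to find, for every point of $\limg[H]$, a neighbourhood $U$ such that the translates $U\cdot k$, $k\in K$, are pairwise disjoint. Since the right action of $H$ on $\limg[H]$ is proper, hence also that of $K$, the quotient map is then a covering. Concretely, I will check that $K$ acts freely on $\limg[H]$. Properness of the action of the discrete group $K$ makes stabilizers finite and gives slice neighbourhoods. A free proper action of a discrete group on a locally compact Hausdorff space is a covering action.

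For freeness, suppose $\zeta\cdot k=\zeta$ with $\zeta$ represented by $\ldots x_2x_1\cdot g$ and $k\in K$. By the characterization of asymptotic equivalence \cite[Proposition~3.2.6]{Nek:book}, there are $s_n$ in the nucleus $\nuke_H$ of $H$ such that $s_n\cdot x_n=x_n\cdot s_{n-1}$ for all $n\ge1$, and $s_0g=gk$. Replacing the representative by $\ldots x_2x_1\cdot 1$ (acting by $g^{-1}$ conjugates $k$ into $K$, since $K$ is normal), we may assume $g=1$, so $s_0=k$. The relations say that $s_n$ fixes $x_n$ and $s_n|_{x_n}=s_{n-1}$, hence $s_n$ fixes $x_n\cdots x_1$ and $s_n|_{x_n\cdots x_1}=k$.

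I first show $s_n\in K$ for all $n$, i.e.\ that $s_n$ acts trivially on $\xs$, by induction. Assume $s_{n-1}\in K$. Then $s_n$ fixes $x_n$, has trivial section at $x_n$, and lies in $\nuke_H$. This alone does not force $s_n$ to act trivially elsewhere, and I expect this to be the main obstacle. The way around it is to exploit the freedom in the representative. The point $\zeta$ has only finitely many representatives, and they are linked through nucleus elements. I would instead argue directly on the images $\pi(s_n)\in\pi(\nuke_H)$, which form a nucleus-like finite set for $G$. The sequence $\ldots x_2x_1\cdot 1$ and $\ldots x_2x_1\cdot\pi(k)=\ldots x_2x_1\cdot 1$ are asymptotically equivalent in $\limg$ via $\pi(s_n)$. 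So the $\pi(s_n)$ form a sequence of elements of $G$, in a finite set, with $\pi(s_0)=1$ and $\pi(s_n)|_{x_n}=\pi(s_{n-1})$, fixing $x_n$.

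The key step is then to upgrade this to $\pi(s_n)=1$ for all $n$. I would do this by comparing with the trivial sequence $s'_n=1$, which also connects $\ldots x_2x_1\cdot1$ to itself. The elements of $\nuke_H$ whose image is trivial in $G$ are exactly the trivial one, by hypothesis. Hence if $\pi(s_n)=1$ then $s_n=1$, and then $s_{n-1}=s_n|_{x_n}=1$ in $H$. Propagating this downward gives $k=s_0=1$.

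To get $\pi(s_n)=1$, I use that the $\pi(s_n)$ are elements of a finite set acting on $\xs$ with long prescribed behaviour. By a diagonal/compactness argument one can pass to a subsequence where the $s_n$ stabilize, producing a bi-infinite "loop" in the nucleus. If this fails, I would fall back on showing that any nontrivial element of $K\cap\nuke_H$-generated loops would contradict the hypothesis via the minimality of the nucleus: every nucleus element $r$ is a section $r'|_u$ of a nucleus element at arbitrary depth. Once freeness is established, the covering property follows from properness as explained above.
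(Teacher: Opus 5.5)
There is a genuine gap, located in the freeness argument. Your framework matches the paper's: freeness plus properness of the action gives the covering, and you reduce to a point $\ldots x_2x_1\cdot 1$ of $\til$ by conjugating.

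The step you call key is to show $\pi(s_n)=1$ for all $n$ (and, before that, $s_n\in K$ for all $n$ by induction). This is false in general. The ``diagonal/compactness'' sketch and the fallback via minimality of the nucleus do not supply an argument.

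For a counterexample, take the Grigorchuk group with $H=G$ and $K=1$. Its nucleus is $\{1,a,b,c,d\}$, where $b=(a,c)$, $c=(a,d)$, $d=(1,b)$. Take the point $\ldots 1110\cdot 1$, i.e.\ $x_1=0$ and $x_n=1$ for $n\ge 2$. The sequence $s_0=1$, $s_1=d$, $s_2=c$, $s_3=b$, $s_4=d,\ldots$ satisfies $s_n\cdot x_n=x_n\cdot s_{n-1}$ for all $n\ge 1$. Yet $s_n\ne 1$ for every $n\ge 1$, and $s_1\notin K$. So the relations together with $\pi(s_0)=1$ do not force the later terms to be trivial, and passing to subsequences cannot change this.

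No such propagation is needed. In the characterization of asymptotic equivalence you quote, the witnessing sequence is indexed by $n\ge 0$ and lies entirely in $\nuke_H$; in particular $s_0\in\nuke_H$. After your reduction $s_0=k$, so $k\in\nuke_H\cap K$: a nucleus element with trivial image in $G$. The hypothesis then gives $k=1$ at once.

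You already wrote down both ingredients: $\pi(s_0)=1$, and ``if $\pi(s_n)=1$ then $s_n=1$''. Applying the latter at $n=0$ finishes the proof. This is exactly the paper's argument. If $h$ fixes a point of $\til$ represented by $\ldots x_2x_1\cdot 1$, then $h=h_0\in\nuke_H$, so $h\in K$ forces $h=1$. Since $\til$ meets every $H$-orbit, $K$ acts freely, and properness gives the covering.
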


\begin{proof}
It is enough to show that $K$ acts freely on $\limg[H]$, since the action of $H$ on $\limg[H]$ is proper. Suppose that $h\in H$ fixes a point $\xi\in\til\subset\limg[H]$, represented by a sequence $\ldots x_2x_1\cdot 1$. Then $\ldots x_2x_1\cdot 1$ and $\ldots x_2x_1\cdot h$ are asymptotically equivalent. Therefore, there exists a sequence $h_n$ of elements of the nucleus $\nuke_H$ of $H$ such that $h_n\cdot x_n=x_n\cdot h_{n-1}$ and $h_0=h$. In particular, $h\in\nuke_H$. If $h\in K$, then, by the condition of the proposition, $h=1$. Thus, the stabilizers of the points of $\til$ in $K$ are trivial. Since $\til$ intersects every $H$-orbit, this implies that stabilizers in $K$ of all points of $\limg[H]$ are trivial.
\end{proof}

\subsection{Cut points}
We say that a point $\xi$ of a connected topological space $\M$ is a \emph{cut point} if the space $\M\setminus\{\xi\}$ is disconnected. We define the \emph{local degree} of a point $\xi$ of a locally connected space $\M$ as the supremum of the number of connected components of $U\setminus\{\xi\}$ over all connected neighborhoods $U$ of $\xi$. A point $\xi$ of local degree at least 2 is called a \emph{local cut point}.

\begin{thm}
\label{th:endsandcutpoints}
    Let $(G, \bim)$ be a contracting finitely generated self-replicating group. The following three numbers are finite and equal.
    \begin{enumerate}
        \item The supremum of $\esf(\Gammatilde_\xi)$ for $\xi\in\xo$.
        \item The supremum of $|\pi_0(\leaftilde_\xi\setminus\{\zeta\})|$ for $\xi\in\xo$ and $\zeta\in\leaftilde_\xi$.
        \item The supremum of the local degrees of points of $\limg$.
    \end{enumerate}
    
The same statement holds with $\Gammatilde_\xi$, $\leaftilde_\xi$, and $\limg$ replaced by $\Gamma_\xi$, $\leaf_\xi$, and $\lims$, respectively.
\end{thm}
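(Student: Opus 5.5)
We will prove the chain of inequalities (1) $\le$ (2) $\le$ (3) $\le$ (1), together with finiteness of one of them. The guiding principle is that $\Gammatilde_\xi$ is the adjacency graph of $0$th level tiles of $\leaftilde_\xi$, and the adjacency graph of the $n$th level tiles is $\Gammatilde_{\sigma^n(\xi)}$. The shift $\sigma$ maps $\leaftilde_{\sigma^{-1}(\xi)}$ homeomorphically onto $\leaftilde_\xi$ and rescales tiles (Proposition~\ref{pr:leavesmap}). So large-scale behaviour of graphs of germs corresponds to small-scale behaviour of germ-leaves, and Proposition~\ref{pr:localhomeomorphism} transfers small-scale behaviour between germ-leaves and $\limg$.

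\textbf{Finiteness.} Proposition~\ref{pr:basisofnbhd} says that $\mathcal{U}_n(\zeta)$ is a union of tiles meeting at $\zeta$. By Proposition~\ref{prop:tiles}, two tiles $\til\otimes x$ and $\til\otimes y$ meet only if $y=g(x)$ with $g\in\nuke$. Hence the number of tiles of a given level containing a point is at most $|\nuke|$. Since each tile is connected (self-replicating case, Proposition~\ref{prop:locconnected}), the components of $U\setminus\{\zeta\}$ for small connected $U$ are controlled by the tiles through $\zeta$. However, a single tile may itself have $\zeta$ as a local cut point, so instead we bound the ends of $\Gammatilde_\xi$ directly. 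Choose $\Sigma$ finite with $k$ infinite components in $\Gammatilde_\xi\setminus\Sigma$. Push the picture down by $\sigma^{n}$, where $n$ is large enough that $\Sigma$ lies inside a single level $-n$ tile's neighbourhood. This produces $k$ components accumulating to a small set, and Lemma~\ref{lem:extendingpaths} (with its uniform constant $|L|$) bounds $k$ by a constant depending only on $|\nuke|$ and $|L|$. I expect this uniform bound to be the main obstacle: a naive argument gives only that each end passes through a boundary tile, and one needs Lemma~\ref{lem:extendingpaths} to show that ends of $\Xi_n$-type neighbourhoods merge beyond a uniform number.

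\textbf{(1) $\le$ (2).} Suppose $\Gammatilde_\xi\setminus\Sigma$ has $k$ infinite components. Rescaling by $\sigma^{n}$ we may assume (replacing $\xi$ by a shifted point) that $\Sigma$ corresponds to tiles of level $m$ contained in a small neighbourhood. We then take a limit point $\zeta$ of the union of tiles for $\Sigma$ as the level goes to $\infty$, using compactness of $\til$ and contraction of the maps $\xi\mapsto\xi\otimes w$. The $k$ infinite components become $k$ distinct pieces of $\leaftilde\setminus\{\zeta\}$. To get a single point rather than a small compact set, use a diagonal/compactness argument over $\xi\in\xo$: pass to a limit in $\xo$ and in $\sub(G)$ via Lemma~\ref{l-semi-continuous}.

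\textbf{(2) $\le$ (3) and (3) $\le$ (1).} The local homeomorphism in Proposition~\ref{pr:localhomeomorphism} shows that components of $\leaftilde_\xi\setminus\{\zeta\}$ near $\zeta$ give components of $U\setminus\{\zeta'\}$ in $\limg$. Components of $\leaftilde_\xi\setminus\{\zeta\}$ that do not accumulate at $\zeta$ cannot exist, by connectedness of $\leaftilde_\xi$. For (3) $\le$ (1), take $\zeta'\in\limg$ of local degree $k$, realised by $U=\mathcal{U}_n(\zeta')$, and let $\Sigma$ be the finitely many level-$N$ tiles containing $\zeta'$. The $k$ components give $k$ sets of tiles in $\Xi_N$ separated by $\Sigma$, each of growing size as $N\to\infty$. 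By Proposition~\ref{pr:convergenceofgraphs} these sets lift to $\Gammatilde_\xi$ for a suitable $\xi$ obtained as a limit. The statement for $\Gamma_\xi$, $\leaf_\xi$, $\lims$ follows verbatim, replacing $\Xi_n$ by $\Gamma_n$.
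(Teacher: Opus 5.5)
\textbf{Verdict: genuine gaps.} Your architecture (the cycle (1)$\le$(2)$\le$(3)$\le$(1) plus a uniform bound) is the paper's, and your (2)$\le$(3) argument is essentially the paper's. The two substantive steps, however, are asserted rather than proved.

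\textbf{Step (1)$\le$(2).} The sentence ``the $k$ infinite components become $k$ distinct pieces of $\leaftilde\setminus\{\zeta\}$'' is the entire content of the inequality. Separation holds in $\Gammatilde_\xi$, but your limit object is a different germ-leaf $\leaftilde_{y_1y_2\ldots}$. Nothing in your sketch prevents a path there from joining the pieces while avoiding $\zeta$, or prevents the pieces from collapsing onto $\zeta$. The paper handles both points as follows.
\begin{enumerate}
\item In the $i$-th component it picks a vertex $[g_{i,n},\xi]$ adjacent to $T_n=\{[g,\xi]\colon g|_{x_1\ldots x_n}\in\nuke\}\supset[D,\xi]$. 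Its section then lies in $\nuke^2$, so the limits $\eta_i=\ldots a^{(i)}_{-2}a^{(i)}_{-1}\otimes[h_i,y_1y_2\ldots]$ exist. They differ from $\zeta=\ldots y_{-2}y_{-1}\otimes[1,y_1y_2\ldots]$ because $[h_i,y_1y_2\ldots]\notin[\nuke,y_1y_2\ldots]$.
\item By contraposition, a path from $\eta_1$ to $\eta_2$ avoiding $\zeta$ is replaced by a chain of level-$N_1$ tiles avoiding a neighbourhood of $\zeta$. This chain is embedded into $\Xi_{N_1+N_2}$ via Proposition~\ref{pr:convergenceofgraphs}.
\item Lemma~\ref{lem:extendingpaths} then pushes it up to a path in $\Xi_{n_k+N_2}$ that misses the vertices over $[D,\xi]$. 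Its image in $\Gammatilde_\xi$ joins two different components, a contradiction.
\end{enumerate}
This path-lifting step is what you are missing.

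\textbf{Finiteness.} Your finiteness paragraph leaves the uniform bound open. The missing idea is to group the vertices just outside $T_n$ by their section $h=g|_{x_1\ldots x_n}$, which ranges over a fixed finite annulus of $G$ (the paper uses $h\in\nuke^{l+1}\setminus\nuke^l$ with $L=\nuke^l$). Vertices with the same $h$ differ only in the prefix $v\in\alb^n$, and Lemma~\ref{lem:connectivityB} joins them by paths staying off $T_n$. Hence the number of infinite components is bounded by the size of that annulus.

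\textbf{Step (3)$\le$(1).} This step has two further problems.
\begin{itemize}
\item[(a)] It relies on tiles being connected, which you attribute to Proposition~\ref{prop:locconnected}. That proposition is about $\limg$, and tiles need not be connected. For $\Z$ with dilation $3$ and digits $\{0,1,5\}$ one has $\limg=\R$, while the tile $T$ (sums $\sum_{i\ge1}d_i3^{-i}$) satisfies $3T=T\cup(T+1)\cup(T+5)$ and contains $0$ and $\frac52$. If $T$ were $[0,\frac52]$, the right-hand side would miss $(\frac72,5)$, so $T$ is not an interval. Without connectedness, a chain of adjacent tiles avoiding the tiles through $\zeta'$ need not stay in one component of $U\setminus\{\zeta'\}$, so your claimed separation by $\Sigma$ in $\Xi_N$ fails. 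The paper instead removes the thicker set $\nuke^{l+1}\cdot x_{n_k}\ldots x_1$ and uses Lemma~\ref{lem:connectedtile}.
\item[(b)] Proposition~\ref{pr:convergenceofgraphs} does not apply, since the words $x_N\ldots x_1$ are not prefixes of a ray. The paper's route is:
\begin{itemize}
\item pass to a subsequence with $x_{n_k}\ldots x_1\to\zeta=y_1y_2\ldots$ and $G^0_{x_{n_k}\ldots x_1}\to H$;
\item use Lemma~\ref{l-semi-continuous} to get $G^0_\zeta\le H\le G_\zeta$;
\item use finiteness of the groups of germs with Lemma~\ref{lem:endsofcoverings} to get $\esf(\Gammatilde_\zeta)\ge\esf(\Gamma_{G/H})$.
\end{itemize}
You would still need an argument showing that $k$ large separated pieces in the finite graphs give $k$ distinct infinite components of $\Gamma_{G/H}$. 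The paper does this using a bound $M$ on the sizes of the finite components of $\Gamma_{G/H}\setminus\nuke^{l+1}H$.
\end{itemize}
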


\begin{proof}
We will prove the theorem for the case of graphs and leaves of germs, as it is the case needed for our paper. The case of orbital graphs and leaves is analogous.

Let us fix a set $L$ satisfying the conditions of Lemma 
\ref{lem:connectivityB}, and thus also Lemma \ref{lem:extendingpaths}, that will be invoked in this proof.  Since a set containing $L$ also satisfies the conditions of the lemma, we may assume that $L=\nuke^l$ for some $l$.

Arguments of the proof of Lemma~\ref{lem:extendingpaths} can be used to prove the following statement (see the proof of~\cite[Theorem~3.5.1]{Nek:book}).

\begin{lemma}
    \label{lem:connectedtile}
The tile $\til$ belongs to one connected component of $\til\cdot L$.
\end{lemma}

Denote by $D_1, D_2, D_3$ the numbers defined in items (1), (2), (3) of our theorem, respectively. Before we prove that they are finite, we allow them to be $\infty$ (regardless of the corresponding infinite cardinality).

Let us prove at first the inequality $D_1\le D_2$. It is enough to prove that if
$\esf(\Gammatilde_{x_1x_2\ldots})\ge K$ for $x_1x_2\ldots \in\xo$ and a natural number $K$, then $D_2\ge K$.

Let $D\subset G$  be a finite set such that $\Gammatilde_{x_1x_2\ldots}\setminus [D, x_1x_2\ldots]$ has at least $K$ infinite components.

Denote
\[T_n=\{[g, x_1x_2\ldots]\colon g|_{x_1x_2\ldots x_n}\in\nuke\}.\]
Equivalently, it is the set of germs at $x_1x_2\ldots$ of the transformations \[x_1x_2\ldots x_n\xo\arr v\xo\colon x_1x_2\ldots x_nw\mapsto v h(w)\] for $v\in\alb^n$ and $h\in\nuke$.

For all $n$ large enough, we have $[D, x_1x_2\ldots]\subset T_n$. 
Therefore, there exist $K$ vertices $[g_{i, n}, x_1x_2\ldots]$, $i=1, 2, \ldots, K$, of $\Gammatilde_{x_1x_2\ldots}$ outside of $T_n$ that belong to different infinite components of $\Gammatilde_{x_1x_2\ldots}\setminus [D, x_1x_2\ldots]$ and such that they are adjacent to some vertices of $T_n$. We may assume that $g_{i, n}|_{x_1x_2\ldots x_n}\in\nuke^2$, since the vertices are adjacent to $T_n$. We also have $[g_{i, n}|_{x_1x_2\ldots x_n}, x_{n+1}x_{n+2}\ldots]\notin [\nuke, x_{n+1}x_{n+2}\ldots]$, since they do not belong to $T_n$. 

Let us shift the sequence $x_1x_2\ldots$ by $\sigma^{-n}$, and denote
\[x_1x_2\ldots x_n=y_{-n, n}y_{-n+1, n}\ldots y_{-1, n},\qquad x_{n+1}x_{n+2}\ldots=y_{1, n}y_{2, n}\ldots\]
and 
\[
g_{i, n}(x_1x_2\ldots) = a_{-n, n}^{(i)}a_{-n+1, n}^{(i)}\ldots a_{-1, n}^{(i)}\;a_{1, n}^{(i)}a_{2, n}^{(i)}\ldots.
\]

Since $\alb$ and $\nuke^2$ are finite, there exists an increasing sequence $n_k$ such that, for every $i, j$, the sequences
\[y_{i, n_k},\quad a_{i, n_k}^{(j)},\quad g_{j, n_k}|_{x_1x_2\ldots x_{n_k}}\]
are eventually constant. Let $y_i, a_i^{(j)}, h_j$ be their limits. Since \[g_{j, n}|_{x_1x_2\ldots x_n}(y_{1, n}y_{2, n}\ldots)=a_{1, n}^{(j)}a_{2, n}^{(j)}\ldots,\] we have $h_j(y_1y_2\ldots)=a_1^{(j)}a_2^{(j)}\ldots$. 

Consider the points  of $\leaftilde_{y_1y_2\ldots}$
\begin{eqnarray*}
\zeta &=& \ldots y_{-2}y_{-1}\otimes [1, y_1y_2\ldots],\\
\eta_j &=& \ldots a_{-2}^{(j)}a_{-1}^{(j)}\otimes [h_j, y_1y_2\ldots].
\end{eqnarray*}
(Here we use the equality sign informally, meaning that the right-hand side is a sequence representing the point on the left-hand side.)

Let us show that for every $j$ the germ $[h_j, y_1y_2\ldots]$ is not equal to the germ at $y_1y_2\ldots$ of an element of $\nuke$. Suppose that, on the contrary, $h'\in\nuke$ is such that $[h_j, y_1y_2\ldots]=[h', y_1y_2\ldots]$. Then there exists $m$ such that $h_j(y_1y_2\ldots y_m)=h'(y_1y_2\ldots y_m)$ and $h_j|_{y_1y_2\ldots y_m}=h'|_{y_1y_2\ldots y_m}$. For all $k$ large enough, we have $y_1y_2\ldots y_m=x_{n_k+1}x_{n_k+2}\ldots x_{n_k+m}$ and $h_j=g_{j, n_k}|_{x_1x_2\ldots x_{n_k}}$. Therefore, for all such $k$, we have $g_{j, n_k}|_{x_1x_2\ldots x_{n_k}}(x_{n_k+1}x_{n_k+2}\ldots x_{n_k+m})=h'(x_{n_k+1}x_{n_k+2}\ldots x_{n_k+m})$ and $g_{j, n_k}|_{x_1x_2\ldots x_{n_k}}|_{x_{n_k+1}x_{n_k+2}\ldots x_{n_k+m}}=h'|_{x_{n_k+1}x_{n_k+2}\ldots x_{n_k+m}}$, hence the germs of $h'$ and $g_{j, n_k}|_{x_1x_2\ldots x_{n_k}}$ in $x_{n_k+1}x_{n_k+2}\ldots$ coincide. But this contradicts the choice of the elements $g_{j, n}$.

Since $[h_j, y_1y_2\ldots]\notin[\nuke, y_1y_2\ldots]$, we have $\zeta\notin\{\eta_1, \eta_2, \ldots, \eta_K\}$.  Let us show that the $\eta_j$ belong to pairwise different connected components of $\leaftilde_{y_1y_2\ldots}\setminus\{\zeta\}$.  It is enough to show that any two of them, e.g., $\eta_1$ and $\eta_2$, belong to different connected components. 

Suppose that it is not true. Since $\leaftilde_{y_1y_2\ldots}$ is locally path-connected (by Proposition \ref{prop:locconnected}), the connected components of $\leaftilde_{y_1y_2\ldots}\setminus\{\zeta\}$ are its path-connected components, so there exists a path $\alpha$ in $\leaftilde_{y_1y_2\ldots}$ connecting $\eta_1$ to $\eta_2$ that does not pass through $\zeta$. 

Fix some $n\ge 1$. Consider the set of all tiles of the level $n$ intersecting the path $\alpha$. It includes the tiles 
\[
\til\otimes a_{-n}^{(j)}a_{-n+1}^{(j)}\ldots a_{-1}^{(j)}\cdot h_j\otimes [1, y_1y_2\ldots]\ni\eta_j
\]
for $j=1, 2$.

The graph spanned by this set in the adjacency graph of $n$th level tiles is connected (otherwise we get a disconnection of the path $\alpha$), hence we can find a finite chain $\gamma$ of the $n$th level tiles intersecting $\alpha$ connecting the tiles  $\til\otimes a_{-n}^{(j)}a_{-n+1}^{(j)}\ldots a_{-1}^{(j)}\cdot h_j\otimes [1, y_1y_2\ldots]$.

  By compactness, if $n$ is large enough, each tile in $\gamma$ is disjoint from the neighborhood 
  \[\til_n:=\bigcup_{s\in \nuke^{l+1}}\til\otimes s\cdot y_{-n}y_{-n+1}\ldots y_{-1}\otimes [1, y_1y_2\ldots]\] of $\zeta$. Let us fix such $n=:N_1$. 

The set of tiles of the form $\til\otimes v\otimes [h, y_1y_2\ldots]$ for $v\in\alb^{N_1}$ and $h\in\nuke^2$ is finite, so, by Proposition~\ref{pr:convergenceofgraphs}, for all $N_2$ large enough the map $v\otimes g\cdot y_1y_2\ldots y_{N_2}\mapsto \til\otimes v\otimes [g, y_1y_2\ldots]$ from the graph induced by  the subset
$\alb^{N_1}\otimes \nuke^2\otimes \alb^{N_2}$ of $\Xi_{N_1+N_2}$ to the adjacency graph of the tiles of level $N_1$ of $\leaftilde_{y_1y_2\ldots}$ is an isomorphic embedding.

The preimage of the path $\gamma$ under this isomorphism is a path $v_1, v_2, \ldots, v_m$ in $\Xi_{N_1+N_2}$ with the endpoints 
\begin{align*}
v_1 &=a_{-N_1}^{(1)}a_{-N_1+1}^{(1)}\ldots a_{-1}^{(1)}\cdot h_1\cdot y_1y_2\ldots y_{N_2},\\
v_m &=a_{-N_1}^{(2)}a_{-N_1+1}^{(2)}\ldots a_{-1}^{(2)}\cdot h_2\cdot y_1y_2\ldots y_{N_2}.
\end{align*}

By Lemma~\ref{lem:extendingpaths}, for all $k$ large enough, there is a path $\gamma'$ in $\Xi_{n_k+N_2}$ from $g_{1, n_k}\cdot x_1x_2\ldots x_{n_k+N_2}$ to $g_{2, n_k}\cdot x_1x_2\ldots x_{n_k+N_2}$ staying inside the set $\bigcup_{i=1}^m\alb^{n_k-N_1}\nuke^l\cdot v_i$. Since the points $v_i$ are outside the set $\nuke^{l+1}\cdot y_{-N_1}y_{-N_1+1}\ldots y_{-1}$, the path $\gamma'$ does not intersect the set $D\cdot \alb^{n_k}\subset\alb^{n_k-N_1}\nuke\cdot y_{-N_1}y_{-N_1+1}\ldots y_{-1}$ for all $k$ large enough. Mapping the path $\gamma'$ to $\Gammatilde_{x_1x_2\ldots}$, we get a contradiction with the condition that the vertices $[g_{1, n}, x_1x_2\ldots]$ and $[g_{2, n}, x_1x_2\ldots]$ belong to different connected components of $\Gammatilde_{x_1x_2\ldots}\setminus [D, x_1x_2\ldots]$. This finishes the proof of the inequality $D_1\le D_2$.

Let us show that $D_1$ is finite. Let $D\subset G$ be a finite set, and let $n$ be such that $[D, x_1x_2\ldots]\subset T_n=\{[g, x_1x_2\ldots]\colon g|_{x_1x_2\ldots x_n}\in\nuke\}$. Let $L=\nuke^l$ be a set satisfying the conditions of Lemma~\ref{lem:connectedtile}. 

For any $h\in\nuke^{l+1}\setminus\nuke^l$ consider the set $T_{h, n}$ of germs of the form $v_1w\mapsto v_2h(w)$ for $v_1=x_1x_2\ldots x_n$ and $v_2\in\alb^n$. By Lemma~\ref{lem:connectedtile}, any two vertices of the set $T_{h, n}$ are connected in $\Gammatilde_{x_1x_2\ldots}$ by a path disjoint from $T_n\supset [D, x_1x_2\ldots]$. Consequently, each of these sets is contained in at most one connected component of $\Gammatilde_{x_1x_2\ldots}\setminus [D, x_1x_2\ldots]$. On the other hand, every infinite connected component contains some vertex $[h, x_1x_2\ldots]$ such that $h\in\nuke^{l+1}\setminus\nuke^l$. Consequently, the number of infinite connected components of $\Gammatilde_{x_1x_2\ldots}\setminus [D, x_1x_2\ldots]$ is at most $|\nuke^{l+1}\setminus\nuke^l|$.

The inequality $D_2\le D_3$ follows directly from Proposition~\ref{pr:localhomeomorphism}:  indeed since germ-leaves are path-connected, any path-connected component of $ \leaftilde_\xi\setminus \{\zeta\}$ accumulates on $\zeta$; thus any global cut point  $\zeta$ is also a local cut point of local degree at least $|\pi_0(\leaftilde_\xi\setminus \{\zeta\})|$.  

It remains to show that $D_3\le D_1$. Let  $\xi\in\limg$ be a point of local degree $\ge K$ for some natural number $K$. 

By applying an element of $G$, we may assume that it belongs to the tile $\til$, i.e., can be represented by a sequence $\ldots x_2x_1\cdot 1$ for $x_i\in\alb$. Let $U$ be a connected neighborhood of $\xi$ such that $|\pi_0(U\setminus\{\xi\})|\ge K$. It is enough to show that $D_1\ge K$, i.e., that there exists a point $\zeta\in\xo$ such that $\esf(\Gammatilde_\zeta)\ge K$.

Denote
\[x_n\ldots x_2x_1=y_{1, n}y_{2, n}\ldots y_{n, n},\]
and pass to a sequence $n_k\to\infty$ such that $y_i=\lim_{k\to\infty}y_{i, n_k}$ for every $i$. Moreover, we may assume that the sequence of subgroups $G_{x_{n_k}\ldots x_2x_1}^0$ converges to a subgroup $H$ in the space of subgroups of $G$. Equivalently, the rooted graphs $(\Xi_{n_k}, x_{n_k}\ldots x_2x_1)$ converge to the Schreier graph $\Gamma_{G/H}$.

By Lemma \ref{l-semi-continuous}, we have $G^0_{\zeta}\le H\le G_\zeta$ for $\zeta=y_1y_2\ldots$, and hence $G^0_\zeta$ has finite index in $H$. Then, by Lemma~\ref{lem:endsofcoverings}, we have $\esf(\Gammatilde_\zeta)\ge\esf(\Gamma_{G/H})$. Consequently, it is enough to prove that $\esf(\Gamma_{G/H})\ge K$.

Suppose that, on the contrary, $\esf(\Gamma_{G/H})<K$. Let $R$ be an arbitrary large radius. Then the ball $B_{\Gamma_{G/H}}(H, R)$ is isomorphic to the ball $B_{\Xi_{n_k}}(x_{n_k}\ldots x_2x_1, R)$ of $\Xi_{n_k}$ for all $k$ large enough. Consider the set of tiles $ \til\otimes \nuke^R\cdot x_{n_k}\ldots x_2x_1$. Its union is a neighborhood of $\xi$. The adjacency graph of this set of tiles is isomorphic to $B_{\Xi_{n_k}}(x_{n_k}\ldots x_2x_1, R)$. For all $k$ large enough (when $R$ is fixed), the union of 0th level tiles $\til\otimes \nuke^R\cdot x_{n_k}\ldots x_2x_1$ is contained in $U$.  

Let $L=\nuke^l$ be as in Lemma~\ref{lem:connectedtile}. Suppose that $g_1, g_2\in G$ are such that there exists a path in $B_{\Xi_{n_k}}(x_{n_k}\ldots x_2x_1, R-l)$ connecting $g_1\cdot x_{n_k}\ldots x_2x_1$ to $g_2\cdot x_{n_k}\ldots x_2x_1$ and not passing through the set $\nuke^{l+1}\cdot x_{n_k}\ldots x_1$. Then, by Lemma~\ref{lem:connectedtile}, the tiles $\til\otimes g_i\cdot x_{n_k}\ldots x_2x_1$ are contained in one connected component of $\bigcup_{g\in \nuke^R\setminus\nuke}\til\otimes g\cdot x_{n_k}\ldots x_2x_1$. The latter set does not contain $\xi$, while the set $\bigcup_{g\in\nuke}\til\otimes g\cdot x_{n_k}\ldots x_2x_1$ converges to $\xi$ as $k\to\infty$. 

The set $U\setminus\{\xi\}$ has at least $K$ connected components, each of which has a non-empty interior and accumulates on $\xi$. Consequently, the graph $B_{\Xi_{n_k}}(x_{n_k}\ldots x_2x_1, R-l)\setminus\nuke^{l+1}\cdot x_{n_k}\ldots x_2x_1$ has $K$ arbitrarily large connected components. Since we assume that $\Gamma_{G/H}$ has fewer than $K$ ends, the graph $\Gamma_{G/H}\setminus \nuke^{l+1} H$ has fewer than $K$ infinite connected components and finitely many finite ones. Let $M$ be such that every finite connected component has fewer than $M$ vertices. Since balls of graphs are connected, each connected component of $B_{\Xi_{n_k}}(x_{n_k}\ldots x_2x_1, R-l)\setminus\nuke^{l+1}\cdot x_{n_k}\ldots x_2x_1$ is adjacent to a vertex of $\nuke^{l+1}\cdot x_{n_k}\ldots x_2x_1$. Since we have at least $K$ arbitrarily large connected components, we can find $K$ connected graphs consisting of $M$ vertices, containing a vertex adjacent to $\nuke^{l+1}\cdot x_{n_k}\ldots x_2x_1$, and belonging to different connected components of $B_{\Xi_{n_k}}(x_{n_k}\ldots x_2x_1, R-l)\setminus\nuke^{l+1}\cdot x_{n_k}\ldots x_2x_1$. 

After passing to a subsequence, we may assume that these $K$ graphs have the same image in $\Gamma_{G/H}$. This will give us $K$ connected subgraphs in $\Gamma_{G/H}\setminus\nuke^{l+1} H$, each consisting of $M$ vertices and such that they are not connected inside $B_{\Gamma_{G/H}}(H, R-l)\setminus\nuke^{l+1}\cdot H$ for any $R$. But this implies that they belong to different connected components of $\Gamma_{G/H}\setminus \nuke^{l+1} H$. Since they have more vertices than any finite connected component of $\Gamma_{G/H}\setminus \nuke^{l+1} H$, they belong to $K$ different infinite components, which is a contradiction.
\end{proof}

Corollary \ref{c-intro-FW} follows by combining Theorem \ref{th:endsandcutpoints} with Theorem \ref{t-multi-ended-graphs}. More precisely:

\begin{cor}
    Let $(G, \bim)$ be a finitely generated contracting self-replicating branch group. Let $\alpha_G=\sup_{H\le G}\esf(\Gamma_{G/H})$, and let $\beta_G$ be the maximum local degree of points in $\limg$. Then we have $\alpha_G=\beta_G$ if $G$ is just-infinite, and $\alpha_G=\max(\beta_G, 2)$ otherwise. In particular, $G$ has Property FW if and only if it is just-infinite and $\limg$ has no local cut point.

\end{cor}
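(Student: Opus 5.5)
The plan is to compare the two quantities through the graphs of germs, splitting according to whether $G$ is just-infinite. Write $\gamma_G=\sup_{\xi\in\xo}\esf(\Gammatilde_\xi)$. By Theorem~\ref{th:endsandcutpoints}, $\gamma_G$ is finite and equals $\beta_G$.

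The first step is the inequality $\alpha_G\ge\gamma_G$. Each $\Gammatilde_\xi$ is itself a Schreier graph $\Gamma_{G/G^0_\xi}$, so it is among the graphs over which $\alpha_G$ takes its supremum. This gives $\alpha_G\ge\beta_G$ unconditionally.

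For the reverse inequality, take any $H\le G$ with $\esf(\Gamma_{G/H})\ge 2$. A faithful contracting group has finite groups of germs on $\xo$ \cite[Proposition 4.1]{Nek:free}, so Theorem~\ref{t-multi-ended-graphs} applies. There are two cases.

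\emph{Faithful action on $G/H$.} Then $H$ has finite index in some $G_\xi$, and $\esf(\Gamma_{G/H})\le\esf(\Gammatilde_\xi)\le\beta_G$.

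\emph{Non-faithful action on $G/H$.} The kernel is a nontrivial normal subgroup. By Proposition~\ref{p-Grigorchuk}, the action factors through a proper quotient $Q$, which is virtually abelian. The graph $\Gamma_{G/H}$ is then a Schreier graph of the infinite transitive action of the finitely generated virtually abelian group $Q$. Such a graph is quasi-isometric to the coset space $Q/\overline{H}$, where $\overline{H}$ is the image of $H$. Pass to a finite-index free abelian subgroup $A\cong\Z^d$ of $Q$, normal in $Q$. The $A$-orbits are finitely many, and each is a quotient of $\Z^d$, hence virtually $\Z^k$. So the graph is quasi-isometric to $\Z^k$ with $k\ge1$, and it has at most $2$ ends, with $2$ ends exactly when $k=1$.

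It follows that $\alpha_G\le\max(\beta_G,2)$ always, and $\alpha_G\le\beta_G$ when $G$ is just-infinite, since then the second case cannot occur. Combined with the first step, this gives $\alpha_G=\beta_G$ in the just-infinite case.

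If $G$ is not just-infinite, Proposition~\ref{p-Grigorchuk} gives an infinite virtually abelian quotient $Q$. Such a $Q$ surjects virtually onto $\Z$: there is a finite-index subgroup $Q_0\le Q$ with an epimorphism $Q_0\to\Z$. Let $G_0$ be the preimage of $Q_0$ in $G$ and $H\le G_0$ the kernel of the composite $G_0\to\Z$. The coset graph $\Gamma_{G/H}$ is quasi-isometric to $\Z$, hence two-ended. Therefore $\alpha_G\ge2$, and with $\alpha_G\ge\beta_G$ this gives $\alpha_G=\max(\beta_G,2)$.

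For the final assertion, recall that $G$ has Property FW if and only if $\alpha_G\le1$. The formula shows that this happens exactly when $G$ is just-infinite and $\beta_G\le1$, that is, when $\limg$ has no local cut point.

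The only point that needs care is the claim that infinite Schreier graphs of virtually abelian groups have at most two ends. I would handle it as described above, via the finite-index free abelian normal subgroup and a quasi-isometry to $\Z^k$. Everything else is bookkeeping with the already-established theorems.
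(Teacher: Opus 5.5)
Your proof follows the paper's route. The lower bound $\alpha_G\ge\beta_G$ holds because graphs of germs are Schreier graphs and Theorem~\ref{th:endsandcutpoints} applies. The upper bound comes from Theorem~\ref{t-multi-ended-graphs} for faithful actions and Proposition~\ref{p-Grigorchuk} for non-faithful ones. A virtual surjection onto $\Z$ gives a two-ended graph when $G$ is not just-infinite.

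The one step that fails as written is your justification that infinite Schreier graphs of a virtually abelian group $Q$ have at most two ends; the paper simply cites \cite[\S6.B]{Cor-FWsurvey} for this. You pass from ``each $A$-orbit is a quotient of $\Z^d$'' to ``the graph is quasi-isometric to $\Z^k$, two-ended exactly when $k=1$''. That inference is false when $\overline{H}\not\le A$, because an element of $Q\setminus A$ fixing a point can fold an $A$-orbit. For example, take $Q=\langle s,t\mid s^2,t^2\rangle\cong D_\infty$, $\overline{H}=\langle t\rangle$ and $A=\langle st\rangle\cong\Z$. Then $A$ acts simply transitively on $Q/\overline{H}$, so $k=1$. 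But $t(st)^n\overline{H}=(st)^{-n}\overline{H}$, so the Schreier graph with respect to $\{s,t\}$ is a ray with a loop at its origin. It is one-ended and not quasi-isometric to any $\Z^k$.

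The bound you need is still true, and Lemma~\ref{lem:endsofcoverings} repairs your argument. The subgroup $H_1=\overline{H}\cap A$ has finite index in $\overline{H}$, so $\esf(\Gamma_{Q/\overline{H}})\le\esf(\Gamma_{Q/H_1})$. Once the stabilizer lies in $A$, your idea works:
\begin{enumerate}
\item If $q\in Q$ maps $aH_1$ to $a'H_1$ with $a,a'\in A$, then $a'^{-1}qa\in H_1\le A$, hence $q\in A$.
\item Word lengths in $A$ and $Q$ are comparable on $A$, so the orbit $A/H_1$ is quasi-isometrically embedded in $\Gamma_{Q/H_1}$.
\item The orbit is coarsely dense, since $Q$ is a finite union of cosets $t_iA$.
\end{enumerate}
Therefore $\Gamma_{Q/H_1}$ is quasi-isometric to the Cayley graph of the finitely generated abelian group $A/H_1$, hence to some $\Z^k$, and it has at most two ends. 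This is the same mechanism that makes your lower-bound construction correct, where $H\le G_0$ holds by design. You should also drop the claim ``two ends exactly when $k=1$'': only the upper bound holds in general, and only the upper bound is needed.
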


\begin{proof}
From Theorems \ref{t-multi-ended-graphs} and \ref{th:endsandcutpoints}, it is clear that the supremum of the number of ends of all {faithful} Schreier graphs of $G$ is equal to $\beta_G$. If $G$ is just-infinite, that number is equal to $\alpha_G$. Otherwise $G$ also admits infinite Schreier graphs induced from its infinite virtually abelian quotients (recall Proposition \ref{p-Grigorchuk}). But for every infinite virtually abelian group $Q$, we have $\sup_{H\le Q} \esf(\Gamma_{Q/H})=2$, see \cite[\S6.B]{Cor-FWsurvey}.
\end{proof}
\subsection{Bounding cardinal-definite functions}
Let $(G, \bim)$ be a self-similar group with basis $\alb$. We use the notation 

\[\mathrm{supp}_{\alb^n}(g):=\{x\in \alb^n \colon g(x)\neq x\}.\]

\begin{prop} \label{p-contracting-cardinal-definite}
Let $(G, \bim)$ be a finitely generated contracting self-replicating group. Fix a basis $\alb$ and let $\nuke$ be the corresponding nucleus. Let $A\subset \Gammatilde_\xi$ be a commensurated subset of the graph of germs of some $\xi\in \xo$, and $\nu_A(\cdot)$ be the associated cardinal-definite function.  Then there exists $C\ge 0$ such that for every $n\ge 1$ we have 
\[\nu_A(g)\leq C|\mathrm{supp}_{\alb^n}(g)|+ C\sum_{v\in \alb^n} \ell_\nuke(g|_v),\]
for every $g\in G$, where $\ell_\nuke(\cdot)$ is the word-length in the generating set $\nuke$. 
\end{prop}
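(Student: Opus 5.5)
We will bound $\nu_A(g)$ by writing $g$ as a product of elements that are each ``cheap'' for $\nu_A$, and then use subadditivity $\nu_A(gh)\le \nu_A(g)+\nu_A(h)$ together with the invariance $\nu_A(hgh^{-1})$-type estimates. First reduce $A$: replacing $A$ by a commensurable set changes $\nu_A$ by a bounded amount, so we may assume $A$ is a union of components of $\Gammatilde_\xi\setminus\Sigma$ for a finite set $\Sigma$ of germs. Then $\nu_A(g)$ is bounded by the number of vertices $y\in\Gammatilde_\xi$ such that $y$ and $g\cdot y$ lie on different sides. This in turn is at most the number of $y$ such that a fixed path from $y$ to $gy$ passes through $\Sigma$. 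Equivalently, $\nu_A(g)\le |\{y\in[G,\xi]: gy\neq y \text{ and } y \text{ is within the reach of } \Sigma \text{ for } g\}|$, to be made precise below.

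The key step is a decomposition of $g$ adapted to level $n$. Write $\xi=x_1x_2\ldots$ and note that a germ $y=[h,\xi]$ is determined by $h(x_1\ldots x_n)=u\in\alb^n$ together with the germ $[h|_{x_1\ldots x_n},\sigma^{-n}\xi]$. The action of $g$ on such a germ sends $u$ to $g(u)$ and replaces the second component by the germ of $g|_u h|_{x_1\ldots x_n}$. We count the germs $y$ on which $g$ can ``cross'' $\Sigma$. Two kinds of crossing occur.
\begin{itemize}
\item[(a)] $u\in\mathrm{supp}_{\alb^n}(g)$.
\item[(b)] $u$ is fixed, but $g|_u$ moves the lower germ across the finite set.
\end{itemize}
By the contracting property and the choice of $n$ large relative to $\Sigma$ (and Proposition~\ref{pr:convergenceofgraphs}), $\Sigma$ is contained in germs whose lower component lies in $[\nuke^{c},\sigma^{-n}\xi]$ for a fixed $c$. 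So only boundedly many lower germs per $u$ are relevant. Crossing through $\Sigma$ along the path determined by writing $g|_u$ as a word of length $\ell_\nuke(g|_u)$ can happen only for germs whose lower component is within distance $\ell_\nuke(g|_u)$ of this bounded set. Equivalently, each letter of the word contributes at most a bounded number of crossing germs, because $\nuke$-neighbours of the bounded set form a bounded set. This gives the contribution $C\ell_\nuke(g|_u)$ for each $u$, and $C$ for each $u$ in the support.

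For small $n$, where $\Sigma$ is not captured at level $n$, we will instead proceed as follows. Fix $n_0$ large enough that $\Sigma$ is captured at level $n_0$. For $n<n_0$ we use $\ell_\nuke(g|_u)\ge c^{-1}\sum_{w\in\alb^{n_0-n}}\ell_\nuke(g|_{uw})$ minus a constant times the support. This follows from contraction: sections of nucleus elements lie in $\nuke$, so $\ell_\nuke(h|_w)\le\ell_\nuke(h)$ and $\sum_w$ is at most $|\alb|^{n_0}\ell_\nuke(h)$. Also $|\mathrm{supp}_{\alb^{n_0}}(g)|\le |\alb|^{n_0}|\mathrm{supp}_{\alb^{n}}(g)|+\sum_u(\text{number of }w\text{ moved by }g|_u)$, where the latter term is at most $|\alb|^{n_0}\ell_\nuke(g|_u)$. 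Then adjusting $C$ handles all $n\ge1$. The main obstacle is the precise bookkeeping in the second paragraph. This consists of showing that a germ not near $\Sigma$ in the ``fibre'' over a fixed $u$ cannot be moved across $\Sigma$ by $g$, i.e. that the path $y, s_1y, s_2s_1y,\ldots$ read from a nucleus word for $g|_u$ (lifted to level $n$ via Lemma~\ref{lem:connectivityB}-style paths staying in the fibre over $u$) avoids $\Sigma$ unless $y$ starts within distance $\ell_\nuke(g|_u)$ of it. I would realize this path concretely through the identification of $\Gammatilde_\xi$ with pairs $(u,[h',\sigma^{-n}\xi])$ described in \S\ref{s-preliminaries-contracting}.
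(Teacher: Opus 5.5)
Your argument is essentially the paper's proof. You identify germs at $\xi$ with pairs (prefix $u\in\alb^n$, lower germ $\rho$ at $\sigma^{-n}(\xi)$), separate the cases $u\in\mathrm{supp}_{\alb^n}(g)$ and $g|_u\neq e$, and join $y$ to $g\cdot y$ by a path whose lower components stay within distance $l$ (where $L=\nuke^l$) of $\rho, s_1\rho,\ldots,g|_u\rho$. The paper builds this path from tiles via Lemma~\ref{lem:connectedtile}; you build it via the Lemma~\ref{lem:connectivityB}/\ref{lem:extendingpaths} construction. That path leaves the fibre over $u$, and only its lower components are controlled, but this is all you need. Crossing $\Sigma$ then forces $\rho\in\bigcup_{i=0}^{m}(s_i\cdots s_1)^{-1}B(P,l)$, where $P$ is the set of lower components of $\Sigma$ and $m=\ell_\nuke(g|_u)$.

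Keep the count in this union-of-translates form, which is linear in $m$; your phrasing ``$\rho$ within distance $\ell_\nuke(g|_u)$ of $P$'' only gives an exponential bound. You can also drop the capture step and the separate treatment of small $n$: $|P|\le|\Sigma|$ at every level (this is the paper's $|\sigma^{-n}(\partial A)|\le|\partial A|$), so the constant is uniform in $n$ directly.
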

\begin{proof}
 Let $L=\nuke^l$ be given by Lemma \ref{lem:connectivityB}, which therefore also satisfies Lemma \ref{lem:connectedtile}. Suppose that $\xi=x_1x_2\cdots$. Fix $n\ge 1$. We apply the shift $\sigma^{-n}$ and consider the germ-leaf $\leaftilde_{\sigma^{-n}(\xi)}$, for $\sigma^{-n}(\xi)=x_{n+1}x_{n+2}\cdots$. Then $\Gammatilde_{\sigma^{-n}(\xi)}$ is the adjacency graph of level 0 tiles $\til\otimes [g, \sigma^{-n}(\xi)]$ while $\Gammatilde_\xi$ is the adjacency graph of level $n$ tiles $\til\otimes v\otimes[g, \sigma^{-n}(\xi)]$ for $v\in \alb^n$ and $g\in G$. Accordingly, we identify the set of vertices of $\Gammatilde_{\sigma^{-n}(\xi)}$ with the set of germs $[G, \sigma^{-n}(\xi)]$, and the set of vertices of $\Gammatilde_\xi$ with $\bim^{\otimes n}\otimes [G, \sigma^{-n}(\xi)]$  (the latter is in explicit bijection with $[G, \xi]$, namely $v\otimes[g, \sigma^{-n}(\xi)]$ for $v\in \alb^n$ and $g\in G$ corresponds to the germ at $\xi$ of the map $y_1y_2\cdots \mapsto vg(y_{n+1}y_{n+2}\cdots)$).

 Let $A\subset \Gammatilde_\xi$ be the commensurated subset in the statement, and $\partial A:=\{a\in A\colon \nuke \cdot a\not \subset A\}$. Consider the corresponding subset of $\Gammatilde_{\sigma^{-n}(\xi)}$
 \[\sigma^{-n}(\partial A):= \{[g, \sigma^{-n}(\xi)] \colon \exists v\in \alb^n,  v\otimes [g, \sigma^{-n}(\xi)] \in \partial A\}\subset \Gammatilde_{\sigma^{-n}(\xi)}.\]
Clearly  $|\sigma^{-n}(\partial A)|\leq |\partial A|$. 

Fix $g\in G$ and let us estimate the cardinality of the set $D:=\{a\in A \colon g\cdot a\notin A\}$. For this, we decompose $D$ as $D=D_0\cup (\bigcup_{v\in \alb^n} D_v)$,
where we set $D_0=\{v\otimes [h, \sigma^{-n}(\xi)] \in D \colon v\in \alb^n, g|_v=e\}$,  and  for every $v\in \alb^n$ we set $D_v=\{v\otimes [h, \sigma^{-n}(\xi)]\in D \colon g|_v\neq e\}$. 

Fix $v\otimes [h, \sigma^{-n}(\xi)]\in D_0$. We have 
\[g\cdot (v\otimes [h, \sigma^{-n}(\xi)])=g(v)\otimes g|_v\cdot [h, \sigma^{-n}(\xi)]=g(v)\otimes  [h, \sigma^{-n}(\xi)].\] By definition of $D$, no point of $D$ can be fixed by $g$, hence necessarily $v\in \mathrm{supp}_{\alb^n}(g)$. On the other hand the two $n$-level tiles $ \til\otimes v\otimes [h, \sigma^{-n}(\xi)]$ and $\til\otimes g(v)\otimes  [h, \sigma^{-n}(\xi)]$ are contained in the same 0-level tile $\til\otimes [h, \sigma^{-n}(\xi)]$. Hence by Lemma \ref{lem:connectedtile} they can be connected by a path $\alpha$ in $\leaftilde_{\sigma^{-n}(\xi)}$ such that the  $0$-tiles visited by $\alpha$  describe a path $\gamma\subset B_{\Gammatilde_{\sigma^{-n}(\xi)}}([h, \sigma^{-n}(\xi)], l)$. On the other hand, the $n$-tiles visited by $\alpha$ span a path in $\Gammatilde_\xi$ starting in $v\otimes [h, \sigma^{-n}(\xi)]\in A$ and ending in $g(v)\otimes [h, \sigma^{-n}(\xi)]\in A^c$. This path must pass through $\partial A$, and hence $\gamma$ must pass through $\sigma^{-n}(\partial A)$. We deduce that $ [h, \sigma^{-n}(\xi)]\in B_{\Gammatilde_{\sigma^{-n}(\xi)}}(\sigma^{-n}(\partial A), l)$. Hence
\begin{equation}|D_0|\leq |\mathrm{supp}_{\alb^n}(g)|\cdot |B_{\Gammatilde_{\sigma^{-n}(\xi)}}(\sigma^{-n}(\partial A), l) |\leq C_1 |\mathrm{supp}_{\alb^n}(g)|,\label{e-D0}\end{equation}
where $C_1=|\partial A|\cdot |\nuke |^l$.

Now fix $v\in \alb^n$ and let us estimate the cardinality of $D_v$. Write $g|_v=s_m\cdots s_1$, with $s_i\in \nuke$ and $m=\ell_\nuke(g|_v)$. Let $v\otimes\rho\in D_v$, where $\rho=[h, \sigma^{-n}(\xi)]$ for some $h\in G$. Then $\gamma=(\rho, s_1\cdot \rho,\ldots, s_m\cdots s_1\cdot \rho)$ is a path in $\Gammatilde_{\sigma^{-n}(\xi)}$ from $\rho$ to $g|_v\cdot \rho$, corresponding to a chain of 0-tiles in $\leaftilde_{\sigma^{-n}(\xi)}$. The first 0-tile in this chain is  $\til\otimes \rho$, so it  contains the $n$-tile $\til \otimes v\otimes \rho$, while the last one,  $\til\otimes g|_v\cdot \rho$, contains the $n$-tile $\til\otimes g(v)\otimes g|_v\cdot \rho$.  By Lemma \ref{lem:connectedtile}, there exists a path $\alpha$ in $\leaftilde_{\sigma^{-n}(\xi)}$ starting at a point in $\til \otimes v\otimes \rho$ and ending at a point in $\til\otimes g(v)\otimes g|_v\cdot \rho$, and such that the 0-tiles met by $\alpha$ define a path $\gamma_1\subset \Gammatilde_{\sigma^{-n}(\xi)}$ that stays inside an $l$-neighbourhood of $\gamma$. The $n$-tiles spanned by $\alpha$ define a path in $\Gammatilde_\xi$ that starts at  $v\otimes \rho\in A$ and ends at $g(v)\otimes g|_v\cdot \rho=g\cdot( v\otimes \rho)\notin A$, and hence must visit $\partial A$. It follows that the path $\gamma_1$ visits $\sigma^{-n}(\partial A)$; and hence $\gamma$ visits $B_{\Gammatilde_{\sigma^{-n}(\xi)}}(\sigma^{-n}(\partial A), l)$, i.e. $s_i\cdots s_1\rho \in B_{\Gammatilde_{\sigma^{-n}(\xi)}}(\sigma^{-n}(\partial A), l)$ for some $i=0, \ldots, m$ (where for $i=0$ we agree that $s_i\cdots s_1=e$). Hence 
\[\rho\in\bigcup_{i=0}^m s_1^{-1}\cdots s^{-1}_i\cdot B_{\Gammatilde_{\sigma^{-n}(\xi)}}(\sigma^{-n}(\partial A), l).\] 

Since $v\otimes \rho$ was an arbitrary element of $D_v$, we deduce that 
\begin{equation} |D_v|\leq (m+1)\cdot |B_{\Gammatilde_{\sigma^{-n}(\xi)}}(\sigma^{-n}(\partial A), l)|\leq 2C_1 \ell_\nuke(g|_v), \label{e-Dv}\end{equation}
where $C_1=|\partial A|\cdot |\nuke |^l$ as above, and we used that $m=\ell_\nuke(g|_v)\ge 1$. 

The estimates \eqref{e-D0}--\eqref{e-Dv} together give 
\[|D|=|D_0|+\sum_{v\in \alb^n} |D_v| \leq C_2|\mathrm{supp}_{\alb^n}(g)|+ C_2\sum_{v\in \alb^n} \ell_\nuke(g|_v).\]
Repeating the same reasoning, we obtain the analogous bound for the cardinality of $D^\ast=\{a\in A^c\colon g\cdot a\in A\}$, and hence for $\nu_A(g)=|D|+|D^\ast|$. \qedhere
\end{proof}
Therefore Corollary \ref{c-cardinal-definite}, combined with Proposition \ref{p-contracting-cardinal-definite}, gives the following, which is a more precise form of Corollary \ref{c-intro-not-PW}. 
\begin{cor} \label{c-cardinal-definite-contracting}
Let $(G, \bim)$ be a finitely generated, faithful, contracting self-replicating branch group, with basis $\alb$ and nucleus $\nuke$. 
Let $\nu_A$ be a cardinal-definite function on $G$. Then there exists $C>0$ and $n_0\ge 1$ such that
\[\nu_A(g)\leq C\sum_{v\in \alb^n}\ell_\nuke(g|_v) \]
for every $n\ge n_0$ and every $g\in \rist(n)'$. In particular, if $G$ is regular branch, then $G$ does not have Property PW.

\end{cor}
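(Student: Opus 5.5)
Start from Corollary \ref{c-cardinal-definite}: $\nu_A$ lies within bounded distance of $\sum_{i=1}^r \alpha_i\nu_{A_i}+\sum_{j=1}^s|\phi_j|$. Here each $A_i$ is a commensurated subset of a graph of germs $\Gammatilde_{\xi_i}$, and the $\phi_j\colon G\dashrightarrow\Z$ are virtual homomorphisms. We may apply it because faithful contracting groups have finite groups of germs. We bound the two kinds of summand separately on $\rist(n)'$.

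\emph{Virtual homomorphism terms.} Each $\phi_j$ factors through an infinite virtually abelian quotient of $G$ restricted to $\mathrm{Dom}(\phi_j)$. I will show that each $\widehat{\phi_j}$ is bounded on $\rist(n)'$ once $n$ is large. Let $D=\bigcap_j \mathrm{Dom}(\phi_j)$, a finite index subgroup. Its normal core $N$ has finite index, so $G/N$ is finite. Take $n_0$ with $\rist(n_0)\cap N$ of finite index in $\rist(n_0)$. Then $\rist(n)'\cap N$ has finite index in $\rist(n)'$ for $n\ge n_0$, and we need $\phi_j$ to vanish on enough of it. The cleanest route is: $\phi_j|_N$ is a homomorphism to $\Z$, hence kills $[N,N]\supseteq[\rist(n)\cap N,\rist(n)\cap N]$. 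Since $\rist(n)\cap N$ has finite index in $\rist(n)$, this commutator subgroup has finite index in $\rist(n)'$, so $\widehat{\phi_j}$ takes finitely many values on $\rist(n)'$. (This holds for fixed $n$. For uniformity in $n$, note $\rist(n+1)'\le\rist(n)'$ up to finite index issues; I would simply absorb the bounded error into $C$, since for $g\ne e$ the right-hand side is at least $1$. The case $g=e$ is trivial.)

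\emph{Germ terms.} Apply Proposition \ref{p-contracting-cardinal-definite} to each $\nu_{A_i}$. For $g\in\rist(n)'\le\rist(n)$, $g$ fixes every vertex of level $n$, so $\mathrm{supp}_{\alb^n}(g)=\varnothing$. This gives $\nu_{A_i}(g)\le C_i\sum_{v}\ell_\nuke(g|_v)$, and summing yields the main inequality. The bounded additive error is again absorbed, since the sum is $\ge1$ for $g\neq e$.

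\emph{Failure of PW.} Suppose $G$ is regular branch over $K$. Choose a nontrivial $k\in K'$; this is possible because $K'$ is nontrivial, as $K$ is not abelian, being of finite index in a branch group. For the vertex $v=x^n$, let $g_n\in\rist(v)'$ act as $k$ below $v$, so that $g_n|_v=k$ and all other sections are trivial. The $g_n$ are pairwise distinct, and $\nu_A(g_n)\le C\ell_\nuke(k)$ stays bounded. Hence $\nu_A$ is not proper.

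The main obstacle I expect is the uniform boundedness of the virtual homomorphism terms on $\rist(n)'$. If needed, I would fix a single $n_0$ and use $\rist(n)'\le\rist(n_0)$ for $n\ge n_0$, together with the finite-index argument at level $n_0$ applied to elements of $[\rist(n),\rist(n)]\subset$ products of commutators.
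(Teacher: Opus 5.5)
Your overall architecture matches the paper's: you combine Corollary \ref{c-cardinal-definite} with Proposition \ref{p-contracting-cardinal-definite}, use that $\mathrm{supp}_{\alb^n}(g)=\varnothing$ on $\rist(n)$, absorb bounded errors (legitimate, since the right-hand side is $\ge 1$ for $g\neq e$), and for regular branch groups use elements of $\rist(n)'$ with a single nontrivial section $k\in K'$. The germ terms and the PW part are fine.

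The gap is in the virtual-homomorphism step. You assert that $[\rist(n)\cap N,\rist(n)\cap N]$ has finite index in $\rist(n)'$ because $\rist(n)\cap N$ has finite index in $\rist(n)$. This is false in general: a finite-index subgroup can have a much smaller commutator subgroup. For example, take $\Z^2\le \Z^2\rtimes\Z/2\Z$ with the swap action; $\Z^2$ is abelian, yet the commutator subgroup of the whole group contains $\{(m,-m)\}$.

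The claim also genuinely fails here. Your condition on $n_0$ is vacuous, so your argument would bound $\widehat{\phi_j}$ on $\rist(n)'$ for every $n$, including $\rist(0)'=G'$. Suppose $G$ is not just-infinite. Proposition \ref{p-Grigorchuk} (plus finite generation of the rigid stabilizers) gives a vertex $v$ at level $\ge 1$ and a surjection $\psi\colon\rist(v)\to\Z$. Set $\phi(g)=\psi(g_v)$ on $\rist(|v|)\cong\prod_{|u|=|v|}\rist(u)$, where $g_v$ is the $\rist(v)$-component. The commutators $[h,r]\in G'$ with $h(v)\neq v$ and $r\in\rist(v)$ then satisfy $\phi([h,r])=-\psi(r)$, which is unbounded. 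So largeness of $n$ is essential, and nothing in your argument produces it. Your fallback, bounding at a fixed level $n_0$ and using $\rist(n)'\le\rist(n_0)'$ (an honest inclusion, with no finite-index issues), just moves the same unproved claim to level $n_0$.

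The missing ingredient is Lemma \ref{l-double-comm}, applied to the normal subgroup you already found inside the kernels. You observed that $[N,N]\le\ker\phi_j$ for all $j$. Moreover, $[N,N]$ is normal in $G$ because $N$ is. It is also nontrivial: the finite-index normal subgroup $N$ contains some $\rist(m_1)'$ by Lemma \ref{l-double-comm}, and $\rist(m_1)'$ is non-abelian. Applying Lemma \ref{l-double-comm} again gives $m$ with $\rist(m)'\le[N,N]\le\ker\phi_j$. Hence $\rist(n)'\le\ker\phi_j$ for all $n\ge m$ and all $j$, and on these subgroups the $\widehat{\phi_j}$ are bounded, uniformly in $n$. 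Taking $n_0=m$, the rest of your argument goes through; this is exactly how the paper handles these terms.
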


\begin{proof}
    Let $\nu_A$ be a cardinal-definite function. Let $\phi_i\colon G\dashrightarrow \Z$ be the finitely many virtual homomorphisms given by Corollary \ref{c-cardinal-definite}. By Lemma \ref{l-double-comm}, for every large enough $n$, we have $\rist(n)'\le \ker \phi_i$ for all $i$. Furthermore, for all $g\in \rist(n)$,  the term $|\mathrm{supp}_{\alb^n}(g)|$ vanishes. Hence the bound follows immediately from Corollary \ref{c-cardinal-definite} and Proposition \ref{p-contracting-cardinal-definite}.  If $G$ is regular branch, then $\rist(n)'$ contains $(K')^{\alb^n}$ for some fixed finite index subgroup $K\le G$. Thus if we fix $k\in K'$ non-trivial, we can find $g_n\in \rist(n)'$ such that $g_n|_v=k$ for some $v\in \alb^n$ and $g_n|_w=e$ for every $v\neq w\in \alb^n$. Thus $\nu_A(g_n)$ stays bounded, although the sequence $(g_n)$ takes infinitely many values. \qedhere
\end{proof}
The first Grigorchuk group is contracting, self-replicating, and regular branch \cite{Gri-branch}, hence we have the following corollary.  
\begin{cor} \label{c-Grigorchuk-PW}
    The first Grigorchuk group does not have Property PW. 
\end{cor}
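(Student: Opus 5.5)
The plan is to deduce this directly from Corollary~\ref{c-cardinal-definite-contracting}, by checking that the first Grigorchuk group satisfies all of its hypotheses. That corollary requires $G$ to be finitely generated, faithful, contracting, self-replicating, and regular branch. The first Grigorchuk group $G=\langle a,b,c,d\rangle$ acts faithfully on the binary tree $\{0,1\}^\ast$ by definition, and it is finitely generated. Each hypothesis then needs only a short verification from the standard wreath recursions
\[a=(1,1)\sigma,\quad b=(a,c),\quad c=(a,d),\quad d=(1,b).\]

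\emph{Contracting.} Sections of a word of length $n$ at the first level have length at most $(n+1)/2$. Iterating this, all sections at a deep enough level lie in the finite set $\{1,a,b,c,d\}$, which is the nucleus.

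\emph{Self-replicating.} The first level is permuted transitively by $a$. For the surjectivity of $G_0\to G$, $g\mapsto g|_0$, I would note that the images of the generators $b,c,d,aba,aca,ada$ are $a,a,1,c,d,b$, which generate $G$.

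\emph{Regular branch.} The group $G$ is regular branch over $K=\langle (ab)^2\rangle^G$, of index $16$. One needs $K\times K\le \psi(K)$, which follows from the classical computation $\psi((ab)^2)$-type identities, e.g. $[(ab)^2,d]=(1,(ab)^2)$ up to the standard conventions; this is \cite{Gri-branch}.

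Since all of this is classical (\cite{Gri-branch}, also \cite{Nek:book}), I would simply cite it rather than recompute. There is then no real obstacle. The only point needing care is the regular branch condition as phrased in the paper, namely $K^{\alb}\le K$ with $K$ of finite index. This is exactly the standard statement $K\times K\le K$ (via $\psi$) for $K=\langle (ab)^2\rangle^G$. Once these properties are recorded, the final clause of Corollary~\ref{c-cardinal-definite-contracting} yields that $G$ does not have Property PW.
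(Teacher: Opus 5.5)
Your proposal is correct and is the paper's argument: the paper simply cites \cite{Gri-branch} for the facts that the first Grigorchuk group is contracting, self-replicating and regular branch, and then applies the last clause of Corollary~\ref{c-cardinal-definite-contracting}; you additionally sketch these verifications. Your sketches check out. With left actions and $[x,y]=xyx^{-1}y^{-1}$ one gets exactly $[(ab)^2,d]=(1,(ab)^2)$. The other commutator convention gives $(1,c(ab)^2c)$, which works equally well after conjugating by $b=(a,c)$.
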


\subsection{Iterated monodromy groups}
\label{s-IMG}
Contracting self-similar groups appear naturally as iterated monodromy groups of locally expanding covering maps. Let $f\colon\M\arr\M$ be a covering map, where $\M$ is a compact path-connected metric space.  For $t\in\M$, the \emph{tree of preimages} is the graph with the set of vertices equal to the formal disjoint union $T_t=\bigsqcup_{n\ge 0}f^{-n}(t)$, where $z\in f^{-n}(t)$ is connected by an edge to $f(z)\in f^{-(n-1)}(t)$. If $\gamma$ is a path in $\M$ starting in $t_1$ and ending in $t_2$, then we define $\gamma(z)$ for $z\in f^{-n}(t_1)$ as the end of the lift $\gamma_z$ of $\gamma$ by the covering $f^n\colon\M\arr\M$ to a path starting in $z$. Then $z\mapsto\gamma(z)$ is an isomorphism $S_\gamma\colon T_{t_1}\arr T_{t_2}$ \emph{defined by $\gamma$}. In particular, every element $[\gamma]$ of the fundamental group $\pi_1(\M, t)$ defines an automorphism $S_\gamma$ of $T_t$. The set of all such automorphisms is the \emph{iterated monodromy group} $\mathop{\mathrm{IMG}}(f)$. The iterated monodromy group has a natural structure of a self-similar group defined by the biset $\bim_f$ equal to the set of isomorphisms $S_\ell\colon T_t\arr T_z$ for all $z\in f^{-1}(t)$ and all paths $\ell$ connecting $t$ to $z$. The left and right actions of $\mathop{\mathrm{IMG}}(f)$ on $\bim_f$ are by post- and pre-compositions, respectively.

More generally, one can define iterated monodromy groups of \emph{virtual endomorphisms} of topological spaces, i.e., pairs of continuous maps $f, \iota\colon\M_1\arr\M$, where $f$ is a finite degree covering. A lift of a point $t$ (or a path $\gamma$) by the $n$th iteration of the virtual endomorphism is a sequence $(t_1, t_2, \ldots, t_n)$ of points (or paths) in $\M_1$ such that $f(t_1)=t$ and $f(t_{i+1})=\iota(t_i)$. The iterated monodromy group and the biset are defined then in a similar way to the case of a covering map (corresponding to the case when $\iota$ is a homeomorphism). See \cite{Nek:combinatorialmodels} for more details on iterated monodromy groups of virtual endomorphisms (called \emph{topological automata} there).

We say that a self-similar group $G$ is \emph{regular} if for every $x_1x_2\ldots\in\xo$ and every $g\in G$ either $g(x_1x_2\ldots)\ne x_1x_2\ldots$ or there exists $n$ such that $g|_{x_1x_2\ldots x_n}=1$. If $G$ is regular, then $G_{x_1x_2\ldots}=G_{x_1x_2\ldots}^0$ for every $x_1x_2\ldots\in\xo$, and thus the graphs $\Gamma_{x_1x_2\ldots}$ and $\Gammatilde_{x_1x_2\ldots}$ coincide.

The following is proved in~\cite[Corollary~4.5.22 and Proposition~4.5.23]{nek:dyngroups}.

\begin{thm}
\label{th:imgmain}
    Let $f\colon\M\arr\M$ be a locally expanding covering map. Then $\mathop{\mathrm{IMG}}(f)$ is a regular contracting self-similar group such that the limit dynamical system $\mathsf{s}\colon\lims\arr\lims$ is topologically conjugate to $f\colon\M\arr\M$.

    Conversely, if $(G, \bim)$ is a regular contracting self-similar group, then $\mathsf{s}\colon\lims\arr\lims$ is a locally expanding covering map, and $\mathop{\mathrm{IMG}}(\mathsf{s})$ is isomorphic, as a self-similar group, to $(G, \bim)$.
\end{thm}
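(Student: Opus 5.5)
The statement has two directions, and I would handle them by two separate constructions.

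\emph{Direct direction.} Let $f$ be locally expanding. I would take $(G,\bim)=(\mathop{\mathrm{IMG}}(f),\bim_f)$ and build a candidate space for Theorem~\ref{th:uniquenessoflimg}. Set $\mathcal{X}=\widetilde{\M}$, the universal cover of $\M$. The right action of $\pi_1(\M,t)$ by deck transformations descends to an action of $G$ on the quotient of $\widetilde{\M}$ by the kernel of $\pi_1\to G$. I would give $\widetilde{\M}$ the lifted path metric, so that $G$ acts properly, cocompactly and isometrically. For each $x=S_\ell\in\bim_f$, the branch of $f^{-1}$ along $\ell$ lifts to a map $\widetilde{\M}\to\widetilde{\M}$. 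Local expansion of $f$ makes this map a uniform contraction; here compactness and the Lebesgue number of an expansion cover give a uniform constant. These lifts assemble into a $G$-equivariant homeomorphism $\Phi\colon\mathcal{X}\otimes\bim_f\to\mathcal{X}$.

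Theorem~\ref{th:uniquenessoflimg} then identifies $\mathcal{X}$ with $\limg$. Taking the quotient by $G$ conjugates $\mathsf{s}$ with $f$. Contraction of $G$ follows from the standard estimate: sections of a fixed element correspond to lifts of a fixed loop by $f^n$. Their lengths shrink exponentially, so eventually they lie in a finite set of short loops, and that finite set is the nucleus.

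For regularity, suppose $g$ fixes $x_1x_2\ldots$. Then the lifts of a loop representing $g$ along that ray are closed loops of exponentially small diameter. Once such a loop lies in an evenly covered simply connected neighbourhood, its section is trivial.

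\emph{Converse direction.} Suppose $G$ is regular and contracting. Regularity says that if $g\in G_\zeta$ for $\zeta\in\xo$, then $g$ fixes a neighbourhood of $\zeta$. I would translate this into the statement that the stabilizers in $G$ of points of $\limg$ act freely on the relevant tiles, in the spirit of Proposition~\ref{pr:limgH}. The conclusion would be that $\mathsf{s}$ has no branching, so $\mathsf{s}$ is a local homeomorphism of degree $|\alb|$, hence a covering. Expansion comes from the contracting metric on $\limg$ quoted above: the maps $\xi\mapsto\xi\otimes w$ contract by $CL^{-n}$, so their inverse $\mathsf{s}^n$ expands locally.

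Finally I would check that $\mathop{\mathrm{IMG}}(\mathsf{s})\cong(G,\bim)$. Paths in $\lims$ lift to $\limg$, and loops correspond to elements of $G$, with the chain of level-$n$ tiles followed by the lifts recording the action on $\alb^n$.

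\textbf{Main obstacle.} I expect the hardest step to be showing that $\mathsf{s}$ is genuinely a covering rather than a branched covering. This is the point where regularity must be converted, via the tile description of Proposition~\ref{prop:tiles} and the neighbourhood bases of Proposition~\ref{pr:basisofnbhd}, into local injectivity of $\mathsf{s}$. I would first prove that $\limg\to\lims$ is a covering, arguing as in Proposition~\ref{pr:limgH}, and then deduce the statement for $\mathsf{s}$.
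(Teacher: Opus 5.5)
The paper does not prove this statement; it quotes \cite{nek:dyngroups}. Your architecture is the natural one: a model space fed into Theorem~\ref{th:uniquenessoflimg} for the direct part, and freeness of the action on $\limg$ for the converse. However, each direction has a real gap.

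\emph{Direct direction.} You rely on the universal cover $\widetilde{\M}$, a lifted path metric, and simply connected evenly covered neighbourhoods. None of these exist in the stated generality.
\begin{itemize}
\item Take a hyperbolic rational map whose Julia set $J$ is a Sierpi\'nski carpet. Then $f|_J$ is an expanding covering. But every open subset of $J$ contains a peripheral circle, which separates two points of $\widehat{\mathbb{C}}\setminus J$ and is therefore essential in $J$. So $J$ is not semi-locally simply connected and has no universal cover.
\item Replacing $d$ by $d^{1/2}$ keeps $f$ expanding but leaves no non-constant rectifiable path, so ``lengths shrink exponentially'' has no meaning.
\end{itemize}
The missing idea is that expansion itself replaces simple connectivity. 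There is $\delta>0$ such that every loop of diameter $<\delta$ lifts to closed loops, of diameter $<\lambda^{-n}\delta$, under every $f^n$. Hence the kernel $K$ of $\pi_1(\M,t)\to\mathop{\mathrm{IMG}}(f)$ contains all conjugates of such loops. Consequently, for locally path-connected $\M$, the covering $\mathcal{X}\to\M$ corresponding to $K$ exists even though $\widetilde{\M}$ may not.

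You should work on this $\mathcal{X}$, with the metric defined by infima over lifted chains of small steps. For this metric, $\Phi_v(\xi)=\Phi(\xi\otimes v)$ is $\lambda^{-|v|}$-Lipschitz. Contraction then comes from properness of the deck action. Fix a lift $\tilde t$ of $t$ and let $|v|=n$. The identity $\Phi_v(\xi\cdot g)=\Phi_{g(v)}(\xi)\cdot g|_v$ shows that $g|_v$ moves a point of a fixed bounded set by at most $\lambda^{-n}d(\tilde t,\tilde t\cdot g)+\mathrm{const}$. This must be proved \emph{before} invoking Theorem~\ref{th:uniquenessoflimg}, which assumes $(G,\bim)$ finitely generated and contracting. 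Regularity then follows from your argument with ``simply connected'' replaced by ``of diameter $<\delta$''.

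\emph{Converse direction.} Everything rests on showing that regularity makes the action of $G$ on $\limg$ free. What is needed is that point stabilizers are trivial, not that they ``act freely on tiles''. Arguing as in Proposition~\ref{pr:limgH} does not give this. That argument only yields, for $g$ fixing $\ldots x_2x_1\cdot 1$, elements $s_n\in\nuke$ with $s_n\cdot x_n=x_n\cdot s_{n-1}$ and $s_0=g$. This is a condition along a \emph{left}-infinite word, whereas regularity concerns right-infinite words. In Proposition~\ref{pr:limgH} the hypothesis on the nucleus kills $s_0$ directly, but here nothing does yet.

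The missing idea is a cycle argument in the finite nucleus. Choose $m>k\ge 0$ with $s_m=s_k=:s$ and put $u=x_m\cdots x_{k+1}$. Then $s\cdot u=u\cdot s$, so $s$ fixes $u^\infty\in\xo$ with $s|_{u^j}=s$ for all $j$. Regularity forces $s=1$, and then $g=s_0=s_k|_{x_k\cdots x_1}=1$.

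Once the action is free, $\limg\to\lims$ is a covering with deck group $G$. The natural homeomorphism $\limg\otimes\bim\cong\limg$ identifies $\mathsf{s}$ with the map $\limg\otimes\alb\to\limg/G$ induced by collapsing the $G$-set $\alb$ to a point. This is the associated bundle with fibre $\alb$, hence a covering of degree $|\alb|$. Two further steps then go through as you sketch: local expansion (of an iterate, since $CL^{-1}$ need not be $<1$, or after adapting the metric), and the identification of $\mathop{\mathrm{IMG}}(\mathsf{s})$ by lifting paths to $\limg$, which is path-connected under the standing self-replicating assumption.
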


In the general (non-regular case) one has to consider \emph{virtual endomorphisms} of orbispaces (spaces locally described as quotients of topological spaces by finite groups). Then the analog of Theorem~\ref{th:imgmain} holds, when $\mathsf{s}\colon\lims\arr\lims$ is interpreted as a virtual endomorphism of an orbispace with underlying space $\lims$.
This orbispace is equivalent to the orbispace defined by the action of $G$ on the limit $G$-space $\limg$. See \cite[Chapter 5]{Nek:book}.

As an example of an application of Theorem~\ref{th:endsandcutpoints}, we have the following classification of iterated monodromy groups of complex rational functions whose graphs of germs and orbital graphs are one-ended.

We say that a complex rational function $f(z)\in\mathbb{C}(z)$ is \emph{post-critically finite} if the forward $f$-orbit of every critical point of $f$ is finite. We denote by $P_f$ the union of the orbits of the critical values of $f$. 

If $f(z)$ is post-critically finite, then we have a virtual endomorphism $f, \iota\colon\widehat{\mathbb{C}}\setminus f^{-1}(P_f)\arr\widehat{\mathbb{C}}\setminus P_f$, where $\iota$ is the identical embedding. Its iterated monodromy group is, by definition, the \emph{iterated monodromy group} of $f$, denoted $\mathop{\mathrm{IMG}}(f)$. Its limit dynamical system $\mathsf{s}\colon \lims\to \lims$ is topologically conjugate to the restriction of $f$ to its Julia set, by \cite[Theorem 5.5.3]{Nek:book}.

\begin{cor} \label{c-IMG}
    Let $f\in\mathbb{C}(z)$ be a post-critically finite rational function. The following conditions for the iterated monodromy group $\mathop{\mathrm{IMG}}(f)=(G, \bim)$ are equivalent:
    \begin{enumerate}
        \item The orbital graphs of the action of $G$ on the boundary of the tree are one-ended.
        \item The graphs of germs of the action of $G$ on the boundary of the tree are one-ended.
        \item The Julia set of $f$ is either the whole sphere or is homeomorphic to the Sierpi\'nski carpet.
    \end{enumerate}
\end{cor}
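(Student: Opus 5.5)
The strategy is to reduce everything to Theorem~\ref{th:endsandcutpoints}, applied to $\mathsf{s}\colon\lims\to\lims$, combined with known facts about Julia sets of post-critically finite rational maps.

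The equivalence (1)$\Leftrightarrow$(2) comes first. Since $\Gammatilde_\xi\to\Gamma_\xi$ is a finite cover (groups of germs are finite for contracting groups), Lemma~\ref{lem:endsofcoverings} gives $\esf(\Gammatilde_\xi)\ge\esf(\Gamma_\xi)$. Hence (2)$\Rightarrow$(1). For the converse I would use Theorem~\ref{th:endsandcutpoints} in both its forms. Condition (2) says that $\limg$ has no local cut points, and condition (1) says that $\lims$ has no local cut points. It therefore suffices to show that $\limg$ has a local cut point if and only if $\lims$ does. For $\mathop{\mathrm{IMG}}(f)$ the orbispace $\limg\to\lims=\limg/G$ has finite isotropy only at the (finitely many orbits of) post-critical points. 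Away from these points the projection is a local homeomorphism, so local degrees agree. Near a post-critical point $p$ the projection is locally the quotient of a neighbourhood in $\limg$ by a finite cyclic group. That group acts by rotation of order $\nu(p)$: on the sphere case this is the branched cover $z\mapsto z^{\nu}$, and in general the standard orbifold structure on the Julia set.

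The key step at post-critical points is to show that a point is a local cut point upstairs if and only if its image is one downstairs. The ``if'' direction holds because the preimage of a separation of a small punctured neighbourhood separates the punctured cover. The ``only if'' direction is the delicate one: a cyclic group can permute components of $U\setminus\{\zeta\}$ transitively. I would handle it as follows. Local cut points of Julia sets are dense in the Julia set whenever any exists, because preimages of a local cut point under $f$ are local cut points away from critical points. So if $\limg$ has a local cut point, it also has one at a non-singular point. There the projection is a local homeomorphism, which produces a local cut point of $\lims$.

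It then remains to prove (1)$\Leftrightarrow$(3), namely: the Julia set $J_f\cong\lims$ has no local cut points iff $J_f=\widehat{\mathbb C}$ or $J_f$ is a Sierpi\'nski carpet. If $J_f=\widehat{\mathbb C}$, there are no local cut points. A Sierpi\'nski carpet also has none, since removing a point from any small connected neighbourhood of the standard carpet leaves it connected. Conversely, suppose $J_f\neq\widehat{\mathbb C}$. For post-critically finite $f$, $J_f$ is connected and locally connected, and the Fatou set is a union of superattracting basins whose components are Jordan domains when $J_f$ has no cut points. I would invoke Whyburn's characterization: a planar continuum that is locally connected, has no local cut points, and has nowhere dense interior (here $J_f$ has empty interior) is the complement of a family of open Jordan domains with disjoint closures and diameters tending to zero, and is therefore a Sierpi\'nski carpet. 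The absence of local cut points gives the disjoint closures, because two Fatou components with touching boundaries create a local cut point at a common boundary point. The diameters tend to zero by local connectivity.

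The main obstacle is passing from the absence of local cut points to closures of Fatou components being Jordan curves that are pairwise disjoint. For this I would rely on known results on boundaries of Fatou components of post-critically finite maps (for example Pilgrim, Tan Lei). Another delicate point is the orbispace comparison at post-critical points; I expect it to be handled by the density argument described above.
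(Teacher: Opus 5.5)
The gap is in (1)$\Rightarrow$(2), i.e.\ the ``only if'' half of your comparison of local cut points of $\limg$ and $\lims$ at singular points. All of the content of the corollary sits there, and your density argument does not handle it.

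First, it is circular. Density of local cut points in $J_f$ presupposes that $J_f$ already has one, which is what you want to derive. Second, pulling back along \emph{non-critical} preimages reproduces the local topology of $J_f$ at a post-critical point $p$. It does not reproduce the branched local topology of $\limg$ at a lift $\zeta$ of $p$, and the two can differ.

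Suppose $J_f$ is a carpet and a cone point $p$ with $1<\kappa(p)<\infty$ (your $\nu(p)$) lies on the boundary of a Fatou component. Then $p$ is not a local cut point of $J_f$. However, a small punctured neighbourhood of $p$ in $J_f$ does not wind around $p$: it misses that Fatou component, a connected set joining $p$ to the outside of any small disc. So under the $\kappa(p)$-fold cyclic branched cover it splits into $\kappa(p)$ pieces, and $\zeta$ has local degree $\kappa(p)\ge 2$. Nothing in your proposal excludes this configuration. Your argument also gives no way to turn such a $\zeta$ into a local cut point of $\limg$ at a non-singular point. Note too that your local model of $\limg$ near singular points (cyclic rotation of order $\kappa(p)$) is asserted, not justified.

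The paper never proves (1)$\Rightarrow$(2) directly. It proves (1)$\Rightarrow$(3)$\Rightarrow$(2), with (2)$\Rightarrow$(1) coming from the finite covering $\Gammatilde_\xi\to\Gamma_\xi$.
\begin{itemize}
\item Your (1)$\Rightarrow$(3) is essentially the paper's. The ``main obstacle'' you list there is not one: Whyburn's theorem (dimension one, locally connected, no local cut points) gives the carpet directly, with no input on Fatou boundaries.
\item For (3)$\Rightarrow$(2), the paper realizes $G$ as the faithful quotient of the self-similar fundamental group $H$ of the Thurston orbifold. The limit space $\limg[H]$ is the lift of $J_f$ to the universal orbifold cover. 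Non-trivial nucleus elements of $H$ stay non-trivial in $G$, so Proposition~\ref{pr:limgH} makes $\limg[H]\to\limg$ a covering. This is what legitimizes your local model.
\item The missing idea is the exclusion of the configuration above. If a ramification point $z$ lay on a Fatou boundary, some critical point $z_0$ with $f^n(z_0)=z$ would be the landing point of at least two internal rays, hence a cut point of $J_f$. So the transfer must go \emph{through} critical preimages, not away from them.
\item With ramification points off all peripheral circles, Whyburn's criterion in terms of complementary Jordan domains shows that the lift of $J_f$, and hence $\limg$, is locally a carpet, so it has no local cut points.
\end{itemize}
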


\begin{proof}
We have the following characterizations of the Sierpi\'nski carpet due to G.~Whyburn (see~\cite{Whyburn1958}).

\begin{thm}
\label{th:Whyburn}
    Let $\mathcal{C}$ be a compact connected subset of the sphere. Then the following conditions are equivalent.
    \begin{enumerate}
        \item The set $\mathcal{C}$ is homeomorphic to the Sierpi\'nski carpet.
        \item The set $\mathcal{C}$ has covering dimension 1, is locally connected, and has no local cut points.
        \item The complement of $\mathcal{C}$ is a union of a countable set of connected components $C_1, C_2, \ldots$ such that  \begin{itemize}
            \item the boundary $\partial C_i$ of each $C_i$ is a closed simple curve,
            \item $\partial C_i\cap\partial C_j=\emptyset$ for all $i\ne j$,
            \item $\bigcup_i\partial C_i$ is dense in $\mathcal{C}$,
            \item the diameters of $C_i$ go to 0 as $i\to\infty$.
        \end{itemize}
    \end{enumerate}
\end{thm}

Let $f$ be a post-critically finite rational function. Its Julia set is connected, locally connected, and if it is not the whole sphere $\widehat{\mathbb{C}}$, has covering dimension 1 (this follows, for example, from the fact that its Hausdorff dimension is strictly less than $2$, see~\cite[Corollary~6.2]{mcmullen}). Consequently, by Theorem~\ref{th:Whyburn}, if the Julia set has no local cut points, it is either $\widehat{\mathbb{C}}$, or homeomorphic to the Sierpi\'nski carpet.

Let us show that if the Julia set of $f$ is the sphere or a Sierpi\'nski carpet, then the limit $G$-space $\limg$ of its iterated monodromy group is locally homeomorphic to the sphere or the Sierpi\'nski carpet, respectively. This will finish the proof of the corollary.

For a post-critical point $z\in P_f$, denote by $\kappa(z)$ the least common multiple of local degrees of $f^n$ at all $x\in f^{-n}(z)$ for all $n\ge 1$. We have $\kappa(z)=\infty$ if and only if $f^n(z)$ is a periodic critical point for some $n\ge 0$.
The \emph{Thurston orbifold} $\mathcal{C}_f$ of $f$ is obtained by removing from $\widehat{\mathbb{C}}$ all points $z\in P_f$ such that $\kappa(z)=\infty$, and uniformizing the remaining points $z\in P_f$ by representing a neighborhood of $z$ as the orbispace of the action by rotations of a disc of a cyclic group of order $\kappa(z)$, where $z$ corresponds to the center of the disc. 

The fundamental group $H$ of the orbifold $\mathcal{C}_f$ admits a natural structure of a self-similar group $(H, \bim)$ such that its faithful quotient is the iterated monodromy group $G$ of $f$. In fact, the virtual endomorphism $f, \iota\colon\widehat{\mathbb{C}}\setminus f^{-1}(P_f)\arr \widehat{\mathbb{C}}\setminus P_f$ is completed to a virtual endomorphism of the Thurston orbifold. The group $(H, \bim)$ is contracting (see~\cite[\S4.7.1]{nek:dyngroups}), and its limit $H$-space $\limg[H]$ is the lift of the Julia set of $f$ to the universal covering $\rho\colon\mathcal{H}\arr\mathcal{C}_f$ of the Thurston orbifold. The universal covering is a simply connected Riemann surface, biholomorphic to the hyperbolic plane or to the Euclidean plane, on which $H$ acts properly by holomorphic automorphisms. Non-trivial elements of the nucleus of $H$ are non-trivial in the iterated monodromy group (see~\cite[Proposition~4.7.3]{nek:dyngroups}).

It follows from Proposition~\ref{pr:limgH} that the natural map $\limg[H]\arr\limg$ is a covering. In particular, $\limg$ is locally homeomorphic to the lift of the Julia set of $f$ to the universal covering of the Thurston orbifold. It follows that if the Julia set is the sphere, then the limit $G$-space $\limg$ is a one-dimensional complex manifold, so it has no local cut points. 

Suppose that the Julia set of $f$ is a carpet. The ramification points of the universal covering map $\rho\colon\mathcal{H}\arr\mathcal{C}_f$ are the points of $P_f$ on which the function $\kappa$ is finite and different from 1. Suppose that there exists a ramification point $z$ of $\rho$ on the boundary of a Fatou component of $f$. Then $z$ is the landing point of an internal ray in the Fatou component (i.e., the image of a radius of the unit disc under the B\"ottcher parametrization of the Fatou component, see~\cite[\S9]{milnor:complexdynamics}), and there exists a critical point $z_0$ and $n$ such that $f^n(z_0)=z$. Then $z_0$ is the landing point of more than one internal ray of Fatou components, hence it is a cut point in the Julia set. We get a contradiction with the condition that the Julia set has no local cut points. Consequently, the ramification points of $\rho$ are not on the boundaries of the Fatou components. Therefore, condition (3) of Theorem~\ref{th:Whyburn} implies that the lift of the Julia set of $f$ by $\rho$ is locally homeomorphic to the Sierpi\'nski carpet.
\end{proof}

It was proved in~\cite{MNZ} that if the Julia set of $f$ is not the whole sphere, then the iterated monodromy group of $f$ is amenable. It appears to be an interesting problem to determine for which $f$ the iterated monodromy group is branch and/or just-infinite.

\section{The Sierpi\'nski carpet group} \label{s-sierpinski}
	
Here we show, as an application of Theorem~\ref{t-multi-ended-graphs}, how the classical Sierpi\'nski carpet (shown on the left part of Figure \ref{fig:carpet}) can be used to construct an example of an amenable group with Property FW.

The Sierpi\'nski carpet group $G$ was  introduced in~\cite[\S6.3]{MNZ}. It is acting on the tree $\{1, 2, \ldots, 8\}^\ast$ and is generated by transformations $a, b, c, d$ satisfying the recursion
\begin{align*}
a &\mapsto(12)(67)(1, 1, a, 1, a, 1, 1, a),\\
b &\mapsto(46)(58)(b, b, b, 1, 1, 1, 1, 1),\\
c &\mapsto(23)(78)(c, 1, 1, c, 1, c, 1, 1),\\
d &\mapsto(14)(35)(1, 1, 1, 1, 1, d, d, d).
\end{align*}
We are using here the wreath recursion notation, where $g\mapsto\pi(g_1, g_2, \ldots, g_8)$ for $\pi\in\Sym_8$ replaces the relations 
\[g\cdot x=\pi(x)\cdot g_x\]
in the associated biset. The group $G$ is the corresponding faithful quotient, i.e., the group acting on the tree $\xs$ by the transformations given by $g(xw)=\pi(x)g_x(w)$. 
See Figure~\ref{fig:carpetmoore} for the graph $\Gamma_1$ of the action on the first level, where labels inside loops and double edges denote the acting element, and the labels outside denote the corresponding sections $g|_i$ (trivial if there is no label).

\begin{remark}
    The group $G$ is the iterated monodromy group of an 8-to-1 partial self-covering $p\colon C_1\to C$ of a unit square $C$, defined on the subset $C_1\subset C$ obtained by dividing $C$ in 9 equal squares and removing the middle one. The map $p$ stretches $C_1$ affinely by a factor $3$, and then folds it, so as to identify each of the 8 remaining squares with $C$. The iterate $p^n$ is thus defined on the natural $n$th step approximation $C_n\subset C$ of the Sierpi\'nski carpet. However, the covering map $p$ is branched over the boundary of $C$, so in order to define its iterated monodromy group one needs to endow $C$ with an appropriate structure of \emph{orbispace} that makes $p$ a honest covering (of orbispaces) \cite{Nek:book}.    In order to avoid recalling the relevant language, we chose to define $G$ directly by its wreath recursion.
\end{remark} 
\begin{figure}
    \centering
    \includegraphics{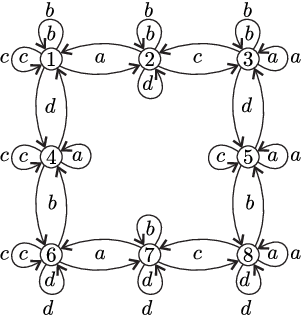}
    \caption{The first-level Schreier graph of the Sierpi\'nski carpet group $G$.    \label{fig:carpetmoore}
}
\end{figure}

\begin{prop}
Let $H$ be the group given by the presentation \[\langle a, b, c, d | a^2, b^2, c^2, d^2, (ab)^6, (bc)^6, (cd)^6, (da)^6\rangle.\] Then the above wreath recursion defines a contracting self-similar group $(H, \bim)$ with the nucleus $\nuke=\langle a, b\rangle\cup\langle b, c\rangle\cup\langle c, d\rangle\cup\langle d, a\rangle$ consisting of 41 elements.
\end{prop}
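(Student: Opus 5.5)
The plan has three steps: check that the wreath recursion is compatible with the relations of $H$, so that $(H,\bim)$ is a well-defined self-similar group; show that the union $\nuke$ of the four dihedral subgroups is closed under sections and that all sections of long products eventually land in it; and count its elements.

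\emph{Well-definedness.} The recursion assigns to each generator a permutation of $\{1,\dots,8\}$ and a tuple of sections in $H$. It defines a homomorphism $H\to \Sym_8\ltimes H^8$, and hence a biset, provided each defining relator maps to the identity.

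For $a^2$, the permutation $(12)(67)$ swaps $1\leftrightarrow 2$ and $6\leftrightarrow 7$. In both pairs the two coordinates carry sections $1$. The remaining coordinates $3,5,8$ are fixed and carry $a$, so the square has sections $a^2=1$. The same check works for $b,c,d$.

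For $(ab)^6$, compute $ab$ from the recursion. The permutation is $(12)(67)(46)(58)$, a product with a $5$-cycle-like structure on $\{4,6,7\}$ together with transpositions. Its coordinates carry products of $a$'s and $b$'s. Going around each cycle of $(ab)^6$, the section is a word in $a,b$ of total exponent dividing the relation, so it becomes a power of $ab$, or a conjugate of such a power, of exponent dividing $6$. I expect every cycle section of $(ab)^6$ to be trivial in $H$, or a conjugate of $(ab)^{k}$ with $6\mid k$. I would verify this for one pair and then invoke the symmetry of the square: the rotation $a\to b\to c\to d$ comes with a relabeling of $\{1,\dots,8\}$ by rotating the 8 subsquares. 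This reduces the check to the single pair $\langle a,b\rangle$.

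\emph{Closure under sections.} Each generator has sections in $\{1\}\cup\{\text{itself}\}$. Hence for $g\in\langle a,b\rangle$, every first-level section $g|_x$ is a word in $a,b$, and the cocycle rule shows $g|_x\in\langle a,b\rangle$. So each dihedral subgroup $D_{ab}=\langle a,b\rangle$, of order at most $12$, is closed under sections, and the union $\nuke$ is section-closed.

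\emph{Contraction.} I would prove that sections shrink length. Take a reduced word $g=s_1\cdots s_n$ and a letter $x$. The section $g|_x$ is a product of the sections $s_i|_{y_i}$ along the path of $x$. At a given vertex, at most one generator of each adjacent pair (e.g.\ $a$ and $b$) has a nontrivial section, which is read off from the table: each coordinate has nontrivial sections for exactly the two generators of the corresponding corner or edge. The generators that are nontrivial at a vertex $y$ are therefore constrained by which side(s) of the square $y$ touches. A word alternating between generators that act nontrivially on $y$ permutes it, so one expects a length estimate of the form $\ell(g|_x)\le (n+C)/2$ in the word metric of $\{a,b,c,d\}$. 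Iterating this estimate shows that sections at deep levels have bounded length. Any element of bounded length whose sections all lie in a fixed finite set must lie in the section-closed set generated by long-level sections. A standard argument (\cite{Nek:book}, Lemma 2.11.2) then shows that the nucleus is the set of elements lying in cycles of the section graph on that finite set.

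\emph{Nucleus and count.} To finish, I need to show that every element of $\nuke$ is recurrent, i.e.\ equal to its own section along some cycle, so that the nucleus equals $\nuke$ rather than a proper subset. I would exhibit this directly from the recursion for the generators and for the products $(ab)^k$. Then I would count: four dihedral groups of order $12$, overlapping pairwise. Adjacent pairs such as $\langle a,b\rangle$ and $\langle b,c\rangle$ share $\{1,b\}$, while opposite pairs share only $\{1\}$. Inclusion–exclusion gives $4\cdot 12-4\cdot 2+1=41$. This requires the four subgroups to really have order $12$ in $H$, which holds since $H$ is a Coxeter group.

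The main obstacle is the length-contraction estimate. I would first try it on the abelianization-free geometric model (tilings by squares with $\pi/6$ angles) and otherwise do a finite computer-style verification over words of length $\le 3$.
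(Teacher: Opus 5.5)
Your well-definedness check and the count are essentially the paper's; the paper computes $(ab)^2\mapsto(467)(1,1,(ab)^2,1,\dots,1)$ directly. Your symmetry reduction is legitimate: $a\mapsto b\mapsto c\mapsto d\mapsto a$, together with the relabeling $1\mapsto 6$, $2\mapsto 4$, $3\mapsto 1$, $4\mapsto 7$, $5\mapsto 2$, $6\mapsto 8$, $7\mapsto 5$, $8\mapsto 3$, preserves both the recursion and the presentation. Two minor slips:
\begin{itemize}
\item $ab$ acts on the first level by $(12)(476)(58)$.
\item Your inclusion--exclusion omits the opposite-pair and triple terms and only comes out right by accident. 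Counting $1+4+4\cdot 9=41$ directly is cleaner.
\end{itemize}

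\textbf{The gap.} The genuine gap is contraction, and above all the inclusion of the nucleus in $\nuke$.

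Your estimate $\ell(g|_x)\le (n+C)/2$ is only expected, and it rests on a false local picture. At the corner letter $3$, both $a$ and $b$ fix $3$ with sections $a$ and $b$. Hence $g|_3=g$ for every $g\in\langle a,b\rangle$; for example $(ab)^3|_3=(ab)^3$, which has length $6$ in $H$. So sections need not shrink at all up to length $6$, and a verification on words of length $\le 3$ cannot deliver your estimate.

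More importantly, even granting that deep sections have bounded length, your last step only proves $\nuke\subseteq$ nucleus, via recurrence. Nothing in your proposal excludes other elements of the bounded ball, such as $ac$ or $abc$, from the nucleus. Saying that the nucleus consists of the elements lying in cycles of the section graph only names the computation that still has to be done.

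\textbf{How the paper closes it.} The paper uses one finite check and no length estimate: $(\nuke\cdot s)|_x\subseteq \nuke$ for all $s\in\{a,b,c,d\}$ and $x\in\alb$.

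Every generator $s$ has all its sections in $\{1,s\}$, and $\nuke$ is section-closed. So induction on word length works. Suppose $g=g's$ and $g'|_w\in\nuke$ for all $|w|\ge m$. Then for $|v|\ge m$ we get $g|_v=g'|_{s(v)}\,s|_v\in\nuke\cup\nuke s$, hence $g|_{vx}\in\nuke$. Therefore all sections of $g$ at levels $\ge \ell(g)$ lie in $\nuke$. This gives contraction and nucleus $\subseteq\nuke$ at once, and your recurrence observation then gives equality.

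The check itself is short. Take $g\in\langle a,b\rangle$.
\begin{itemize}
\item For $s=c$: the letters where $c$ has section $c$ are $1,4,6$. On the $\langle a,b\rangle$-orbits $\{1,2\}$ and $\{4,6,7\}$, the sections of $g$ lie in $\langle b\rangle$ and are trivial, respectively. So $(gc)|_x\in\{c,bc\}\subset\langle b,c\rangle$ for these $x$, and $(gc)|_x=g|_{c(x)}\in\langle a,b\rangle$ otherwise.
\item For $s=d$: $d$ has section $d$ only at $6,7,8$, where $g|_6=g|_7=1$ and $g|_8\in\langle a\rangle$. So $(gd)|_x\in\langle d,a\rangle$ at those letters.
\end{itemize}
Your rotation symmetry handles the other three dihedral subgroups.
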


\begin{proof}
The above formulas obviously define a wreath recursion on the free group on $\{a, b, c, d\}$.  Let us first check that the wreath recursion remains well-defined on its quotient $H_1=\langle a, b, c, d | a^2, b^2, c^2, d^2\rangle$. We have
\begin{align*}
a^2 &\mapsto (1, 1, a^2, 1, a^2, 1, 1, a^2),\\
b^2 &\mapsto (b^2, b^2, b^2, 1, 1, 1, 1, 1),\\
c^2 &\mapsto (c^2, 1, 1, c^2, 1, c^2, 1, 1),\\
d^2 &\mapsto (1, 1, 1, 1, 1, d^2, d^2, d^2).
\end{align*}
Consequently, if we apply the wreath recursion to any element $g$ of the free group generated by $a, b, c, d$ such that $g$ is trivial in $H$, we will get $(g_1, g_2, \ldots, g_8)$, where each $g_i$ is trivial in $H_1$. Consequently, the wreath recursion is well-defined on $H_1$. 

Now if in $H_1$, we have (recall that the action on $\xs$ is from the left)
\begin{multline*}
ab\mapsto (12)(67)(1, 1, a, 1, a, 1, 1, a)(46)(58)(b, b, b, 1, 1, 1, 1, 1)=\\
(12)(476)(58)(1, 1, a, 1, a, 1, 1, a)(b, b, b, 1, 1, 1, 1, 1)=\\
(12)(476)(58)(b, b, ab, 1, a, 1, 1, a),
\end{multline*}
hence
\begin{multline*}
    (ab)^2\mapsto (12)(476)(58)(b, b, ab, 1, a, 1, 1, a)(12)(476)(58)(b, b, ab, 1, a, 1, 1, a)=\\
    (467)(b, b, ab, 1, a, 1, 1, a)(b, b, ab, 1, a, 1, 1, a)=\\
    (467)(1, 1, (ab)^2, 1, 1, 1, 1, 1),
\end{multline*}
so that
\[(ab)^6\mapsto (1, 1, (ab)^6, 1, 1, 1, 1, 1).\]

Similarly, 
\begin{align*}
    (bc)^6 &\mapsto ((bc)^6, 1, 1, 1, 1, 1, 1, 1),\\
    (cd)^6 &\mapsto (1, 1, 1, 1, 1, (cd)^6, 1, 1),\\
    (da)^6 &\mapsto (1, 1, 1, 1, 1, 1, 1, (da)^6).
\end{align*}

So the same reasoning as above shows that the wreath recursion is well-defined on $H$.

It is checked directly, that for every triple $h_1, h_2, h_3$ of pairwise different generators of $H$, we have $(\langle h_1, h_2\rangle\cdot h_3)|_x\subset\nuke$ for every $x\in\alb$. This implies that $\nuke$ contains the nucleus. It is also easy to check that every element of $\nuke$ is a section of an element of $\nuke$, hence $\nuke$ is the nucleus.
\end{proof}

Note that it follows from the above computation that the elements $ab, bc, cd, da$ of $G$ are of order $6$. Consequently, all non-trivial elements of the nucleus of $H$ are also non-trivial elements of the nucleus of $G$, so that the nucleus $\langle a, b\rangle\cup\langle b, c\rangle\cup\langle c, d\rangle\cup\langle d, a\rangle$ of $G$ also has 41 elements.

\begin{prop}
    The abelianization of $G$ is isomorphic to $(\Z/2\Z)^4$.
\end{prop}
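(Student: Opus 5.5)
The plan has two halves: an upper bound, that the abelianization is an elementary abelian $2$-group of rank at most $4$, and a lower bound, producing four independent homomorphisms $G\to\Z/2\Z$.

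For the upper bound, $G$ is generated by the four involutions $a,b,c,d$, since $a^2=b^2=c^2=d^2=1$ already holds in $H$ and hence in its faithful quotient $G$. Therefore $G/G'$ is a quotient of $(\Z/2\Z)^4$.

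For the lower bound, it suffices to show that the homomorphism $\phi\colon H\to(\Z/2\Z)^4$, sending $a,b,c,d$ to the standard basis vectors, factors through the faithful quotient $G=H/K$. Here $K$ is the kernel of the action of $H$ on $\xs$. This $\phi$ is well defined on $H$: every relator $a^2,b^2,c^2,d^2,(ab)^6,\dots$ has even exponent sum in each generator. It remains to prove $K\le\ker\phi$.

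Since $K$ is hard to describe directly, I would define $\phi$ as a product of "parity" invariants readable from the action on $\xs$. The natural candidates come from the wreath recursion. For $g\mapsto\pi(g_1,\dots,g_8)$, look at the parity of $\pi$ restricted to suitable blocks, together with the coordinates of the sections. The first-level permutations are $a\mapsto(12)(67)$, $b\mapsto(46)(58)$, $c\mapsto(23)(78)$ and $d\mapsto(14)(35)$. Each is a product of two transpositions, so the sign of $\pi$ is always trivial and the first level alone does not detect the generators.

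I would therefore try the following recursive definition. Let $\psi_a(g)$ be the number of occurrences of $a$ mod $2$, and define it recursively by
\[\psi_a(g)=\psi_a(g_3)+\psi_a(g_5)+\psi_a(g_8)+\epsilon(\pi)\pmod 2,\]
where $\epsilon(\pi)$ is some parity of $\pi$ that counts how $\pi$ acts on a pair such as $\{1,2\}$ versus $\{6,7\}$. The goal is a map that is well defined on $\aut(\xs)$ restricted to $G$. A cleaner route is to find, for each generator $x$, an explicit finite level $n$ and a $G$-invariant partition of $\alb^n$ into blocks on which $x$ acts by an odd permutation while the other three generators act evenly. The sign of the induced permutation on that block structure is then a homomorphism $G\to\Z/2\Z$ defined purely via the action, so it kills $K$ automatically.

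Concretely, I would compute the action on level $2$ (64 vertices). Alternatively, I would use the action on the edges or $\Gamma_1$ from Figure~\ref{fig:carpetmoore}, exploiting the fact that sections of $a$ sit at positions $3,5,8$ where $a$ fixes the letter. Candidate homomorphisms are signs of the action on $\alb^2$ restricted to orbits of the stabilizer of a first-level vertex. The plan is to search for four such sign characters and check that their $4\times4$ matrix of values on $a,b,c,d$ is invertible over $\Z/2\Z$.

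The main obstacle is the lower bound: certifying that no relation in $G$ beyond those of $H$ collapses the abelianization. Action-defined sign characters are the way I would circumvent describing $K$. The step I expect to be fiddly is finding the right level and blocks so that the four characters are independent. I would try the level-$2$ permutation signs first, using the symmetry of the carpet under the dihedral group permuting $a,b,c,d$ cyclically. That symmetry reduces the search to a single character separating $a$ from $b,c,d$.
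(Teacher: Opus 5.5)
\textbf{There is a genuine gap in the lower bound.} Your upper bound and the well-definedness of $\phi$ on $H$ are fine. The problem is structural, not a matter of searching harder: the route you propose cannot produce any nontrivial character.

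\emph{Why finite-level characters fail.} Any homomorphism $G\to\Z/2\Z$ read off from the action on $\alb^n$ depends only on the image of $g$ in $\Sym(\alb^n)$. This covers the sign on $\alb^n$, the sign of the permutation of a $G$-invariant block system, signs on edges of $\Gamma_n$, and so on. Every such homomorphism therefore factors through $G/\stab_G(n)$, where $\stab_G(n)$ is the pointwise stabilizer of level $n$. But no nontrivial character of $G$ vanishes on any $\stab_G(n)$. Indeed, $\rist(1)=G^{\alb}$ (Proposition~\ref{pr:sierpinskibranch}), so for every $v\in\alb^n$ and $h\in\{a,b,c,d\}$ there is $g\in\stab_G(n)$ whose only nontrivial section at level $n$ is $g|_v=h$. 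Once $\phi$ descends to $G$, the identity in the next paragraph shows that $g$ and $h$ have the same image in $G/G'$.

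Your first candidate already illustrates this. Write $\mathrm{sgn}_n(g)$ for the sign of $g$ on $\alb^n$ and $g=\pi(g_1,\dots,g_8)$. Then $\mathrm{sgn}_n(g)=\mathrm{sgn}(\pi)^{8^{n-1}}\prod_{x\in\alb}\mathrm{sgn}_{n-1}(g_x)$. Each generator has an even root permutation and exactly three nontrivial sections, all equal to itself, so $\mathrm{sgn}_n$ is identically trivial on $G$ for every $n$. Signs on orbits of a vertex stabilizer are not homomorphisms on $G$ at all. So the abelianization is invisible at every finite level, and the kernel $K$ of $H\to G$ has to be controlled by other means.

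\emph{The missing idea.} It is essentially the recursion you wrote down and discarded, corrected and combined with contraction. Stay in $H$, where $\phi$ is a homomorphism.
By the cocycle rule $(gh)|_x=g|_{h(x)}h|_x$, the map $g\mapsto\sum_{x\in\alb}\phi(g|_x)$ is also a homomorphism $H\to(\Z/2\Z)^4$. It agrees with $\phi$ on $a,b,c,d$, since each generator has three sections equal to itself and $3\equiv 1$. Hence $\phi(g)=\sum_{x\in\alb}\phi(g|_x)$, with no correction term coming from $\pi$, and inductively $\phi(g)=\sum_{v\in\alb^n}\phi(g|_v)$.

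If $g\in K$, all its sections act trivially on the tree, and for $n$ large they lie in the nucleus of $H$. Nontrivial nucleus elements of $H$ stay nontrivial in $G$, because $ab,bc,cd,da$ have order $6$ in $G$. So these sections are trivial in $H$, and $\phi(g)=0$. This is the paper's proof.
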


\begin{proof}
Let $P\colon H\arr (\Z/2\Z)^4$ be the epimorphism mapping the generators $a, b, c, d$ of $H$ to the generators of $(\Z/2\Z)^4$. It follows from the presentation of $H$ that it is well defined.

It follows from the wreath recursion that we have $P(g)=P(g|_1)+P(g|_2)+\cdots+P(g|_8)$ for every $g\in H$ (indeed, the formula on the right-hand side of the equality also defines a homomorphism, to $(\Z/2\Z)^4$, and it coincides with $P$ on $\{a, b, c, d\}$). 
Consequently, $P(g)=\sum_{v\in\alb^n} P(g|_v)$ for every $n$. Suppose that $g$ belongs to the kernel of the epimorphism $H\arr G$. Since the wreath recursion is contracting on $H$ and non-trivial elements of the nucleus of $H$ are non-trivial in $G$, there exists $n$ such that all sections $g|_v$ for $v\in\alb^n$ are trivial in $H$. Consequently, $P(g)=0$. It follows that $P$ induces a well defined epimorphism $P\colon G\arr (\Z/2\Z)^4$, which descends to an isomorphism on the abelianisation $G/G'$ since $G$ is generated by four involutions.
\end{proof}

When talking about the group $\aut\xs$, we identify any element $\pi\in\Sym(\alb)$ with the automorphism $xv\mapsto \pi(x)v$, and call such automorphisms \emph{rooted}. Respectively, the action of a subgroup of $\Sym(\alb)$ on $\xs$ by rooted automorphisms is also called \emph{rooted}. We naturally identify the first-level stabilizer $\stab_1$ of $\aut\xs$ with $(\aut\xs)^\alb$, so that $(g_x)_{x\in\alb}(xv)=xg_x(v)$. Then the wreath recursion $g\mapsto\pi(g_x)_{x\in\alb}$ becomes an equality $g=\pi\cdot(g_x)_{x\in\alb}$, where $\pi$ acts on $\xs$ by the corresponding rooted automorphism.

\begin{prop} \label{pr:sierpinskibranch}
    The group  $G<\aut(\xs)$ is branch and just-infinite. More explicitly, $\rist(1)=G^\alb$, so that the wreath recursion $G\arr\Alt(\alb)\ltimes G^\alb$ is an isomorphism, and $G/G'\cong (\Z/2\Z)^4$.
\end{prop}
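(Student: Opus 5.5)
The plan is to prove directly that $G^\alb\le G$, i.e.\ that for every $g\in G$ and every $x\in\alb$ the element acting as $g$ below $x$ and trivially elsewhere lies in $G$. Since $G$ is self-replicating (the first-level action is transitive and sections of the stabilizer $G_x$ generate $G$), the stabilizer $G_x$ acts transitively on $\alb\setminus\{x\}$. It therefore suffices to produce, for a single fixed vertex $x$, the four elements $a_x, b_x, c_x, d_x$ supported below $x$; conjugation then gives them at every vertex.

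I will construct these from the relations $(ab)^6$, $(bc)^6$, $(cd)^6$, $(da)^6$ together with the computations already made. For instance,
\[(ab)^2=(467)(1,1,(ab)^2,1,1,1,1,1),\]
so $(ab)^6$ is trivial. Squaring or cubing a product of generators kills the rooted permutation, leaving an element with a single nontrivial section. Concretely, I will look at words such as $(ab)^3$, commutators $[a,b^{c}]$, and products like $a\cdot a^{h}$ for suitable $h$, and use the first-level Schreier graph (Figure~\ref{fig:carpetmoore}) to choose words whose permutation part is trivial and whose sections are a single generator at one coordinate and trivial elsewhere. The natural first attempt is to conjugate $a$ by an element $h$ whose permutation fixes the coordinates $3,5,8$ where $a$ has sections, but which moves the support of $(12)(67)$. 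Then $a\,a^{h}$ has trivial permutation and sections of the form $(\ldots,a\cdot a^{h|},\ldots)$. Iterating with suitable $h$ should isolate a section $a$ or $(ab)^k$ at one coordinate. This explicit word search is the main obstacle. I expect it to reduce to a finite computation: first obtaining $G'^{\alb}\le G$, or at least $K^{\alb}$ for $K=\langle\!\langle (ab)^2,\dots\rangle\!\rangle$ or similar, and then upgrading to all of $G^\alb$ by checking that the abelianization obstruction vanishes. For the upgrade I will use the homomorphism $P(g)=\sum_x P(g|_x)$ from the abelianization computation: the element $(a,1,\dots,1)$ has $P$ equal to $P(a)$, so there is no parity obstruction at this level.

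Once $G^\alb\le G$ is known, $\rist(1)=G^\alb$ and $\rist(x)\cong G$ for each $x$. I will then compute the image of $G$ in $\Sym(\alb)$. Each generator is a product of two transpositions, hence even, and the group they generate acts transitively. Because $G^\alb\le G$ and $G$ is generated by elements $\pi\cdot(g_x)$, the wreath recursion is an isomorphism onto $\Pi\ltimes G^\alb$ where $\Pi$ is the rooted permutation group. A direct check that $(12)(67),(46)(58),(23)(78),(14)(35)$ generate $\Alt(8)$ (for example by exhibiting a $3$-cycle, using the $(467)$ already found inside $(ab)^2$'s permutation, together with $2$-transitivity) gives $\Pi=\Alt(\alb)$. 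By induction $\rist(n)=G^{\alb^n}$ has finite index, since $G/\rist(n)$ embeds in the finite iterated wreath product of $\Alt(8)$. So $G$ is (regular) branch, and level-transitivity follows from self-replication.

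For just-infiniteness I will apply Proposition~\ref{p-Grigorchuk}: it suffices that $\rist(v)'$ has finite index in $\rist(v)\cong G$, i.e.\ that $G/G'$ is finite. That holds because $G/G'\cong(\Z/2\Z)^4$ by the preceding proposition.
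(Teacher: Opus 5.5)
\textbf{There is a genuine gap: you never prove the inclusion $G^{\alb}\le G$.} This is the central claim of the proposition, and you defer it to a ``word search''. The heuristics you offer for that search do not work as stated.
\begin{enumerate}
\item Squaring or cubing products of generators does not kill the permutation. $(ab)^2$ has permutation $(467)$, and $(ab)^3$ has permutation $(12)(58)$. Only $(ab)^6$ has trivial permutation, and it is the identity.
\item For $a\,a^{h}$ to have trivial permutation, $\pi_h$ must centralize $(12)(67)$, so it cannot move that support. Even then, the sections of $a\,a^{h}$ at $1,2,6,7$ are products of sections of $h$ and $h^{-1}$, which you do not control. So you do not get an element of the form $(\dots,a\cdot a^{h|},\dots)$.
\item The proposed upgrade from some $K^{\alb}$ to $G^{\alb}$ is not an argument. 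Noting that the homomorphism $P$ poses no obstruction is only a necessary condition for $(a,1,\dots,1)\in G$.
\end{enumerate}
A smaller point: self-replication is not yet available at this stage, and it would not imply that $G_x$ is transitive on $\alb\setminus\{x\}$ anyway. Fortunately you only need two things. First, transitivity of $G$ on $\alb$, which the $8$-cycle first-level graph gives. Second, the fact that $\{g\in G: (g \text{ at } x,\ 1\text{ elsewhere})\in G\}$ is a subgroup.

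\textbf{The missing idea is to first put the rooted group $\Alt(\alb)$ inside $G$.} Conjugation by a rooted permutation only permutes coordinates and leaves sections unchanged, so once you have it, isolating sections is easy. You already have the needed elements: $(ab)^2=(467)(1,1,(ab)^2,1,1,1,1,1)$ and $(da)^2=(142)(1,\dots,1,(da)^2)$. Take $L=\langle (ab)^2,(da)^2\rangle$. Every element of $L$ fixes $3,5,8$ and has trivial sections away from $\{3,8\}$. Its sections at $3$ and $8$ lie in the abelian groups $\langle (ab)^2\rangle$ and $\langle (da)^2\rangle$. Hence $L'$ is exactly the rooted group $\Alt(\{1,2,4,6,7\})$. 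The other dihedral pairs give rooted alternating groups on $\{1,2,3,4,5\}$, $\{2,3,5,7,8\}$ and $\{4,5,6,7,8\}$, and together these generate rooted $\Alt(\alb)$. Then $(12)(67)\cdot a=(1,1,a,1,a,1,1,a)\in G$. Multiplying this element by its conjugates under rooted $3$-cycles such as $(687)$, $(567)$ and $(123)$ isolates $(a,1,\dots,1)$, and similarly for $b,c,d$. After that, the rest of your outline matches the paper and is fine: $\Pi=\Alt(\alb)$, $\rist(n)=G^{\alb^n}$, branchness, and just-infiniteness from $G/G'\cong(\Z/2\Z)^4$ via Proposition~\ref{p-Grigorchuk}.
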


\begin{proof}
We have seen above that 
\[
    (ab)^2=(467)(1, 1, (ab)^2, 1, 1, 1, 1, 1).
\]
and similarly
\[(da)^2=(142)(1, 1, 1, 1, 1, 1, 1, (da)^2).\]

It follows that the subgroup $L=\langle (ab)^2, (da)^2\rangle$ acts on the set $\{1, 2, 4, 6, 7\}$ as $\Alt(5)$ and for every element $g\in L$ we have $g|_x=1$ for all $x\in\{1, 2, 4, 5, 6, 7\}$. All the other letters $\{3, 5, 8\}$ are fixed by $L$, and we have $L|_5=\{1\}$, $L|_3=\langle (ab)^2\rangle$, and $L|_8=\langle (da)^2\rangle$. It follows that $L'$ is the rooted group $\Alt(\{1, 2, 4, 6, 7\})$.

The same arguments (using other pairs of dihedral subgroups of the nucleus) show that the rooted groups $\Alt(\{1, 2, 3, 4, 5\})$, $\Alt(\{2, 3, 5, 7, 8\})$, and $\Alt(\{4, 5, 6, 7, 8\})$ are also contained in $G$. Consequently, $\Alt(\alb)<G$. 

This implies that $(12)(67)\cdot a=(1, 1, a, 1, a, 1, 1, a)\in G$. Conjugating this element by $(687)\in\Alt(\alb)<G$, we conclude that $(1, 1, a, 1, a, 1, a, 1)\in G$, hence 
\[(1, 1, a, 1, a, 1, 1, a)(1, 1, a, 1, a, 1, a, 1)=(1, 1, 1, 1, 1, 1, a, a)\in G.\]
Conjugating this element by $(567)$, we get $(1, 1, 1, 1, a, 1, 1, a)\in G$, hence 
\[(1, 1, a, 1, a, 1, 1, a)(1, 1, 1, 1, a, 1, 1, a)=(1, 1, a, 1, 1, 1, 1, 1)\in G.\]
Conjugating it by $(123)$, we get $(a, 1, \ldots, 1)\in G$.

Similar arguments show $(b, 1, \ldots, 1), (c, 1, \ldots, 1), (d, 1, \ldots, 1)\in G$, hence the elements $(g, 1, \ldots, 1)$ belong to $G$ for all $g\in G$. Conjugating them by elements of $\Alt(\alb)$, we conclude that $\rist(1)=G^{\alb}$, hence $G=\Alt(\alb)\ltimes G^{\alb}$, and the group $G$ is branch.

It follows that $\rist(v)\simeq G$ for every $v\in \xs$. Since the abelianization of $G$ is $(\Z/2\Z)^4$, Proposition~\ref{p-Grigorchuk} implies that $G$ is just-infinite.
\end{proof}

\begin{thm}\label{t-sierpinski}
The group $G$ is amenable and has Property FW.
\end{thm}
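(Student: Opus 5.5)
Amenability I will simply import from \cite{MNZ}: there it is shown that contracting self-similar groups whose limit space has conformal dimension $<2$ are amenable, and the Sierpi\'nski carpet group was treated in \cite[\S6.3]{MNZ}. The substance is Property FW, which I plan to get from Corollary~\ref{c-intro-FW} (the corollary after Theorem~\ref{th:endsandcutpoints}). Its hypotheses are that $G$ be finitely generated, contracting, self-replicating and branch. It is finitely generated by $a,b,c,d$, contracting with the $41$-element nucleus computed above, and branch and just-infinite by Proposition~\ref{pr:sierpinskibranch}.

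Self-replication is quick. The first-level action contains $\Alt(\alb)$, hence is transitive. By Proposition~\ref{pr:sierpinskibranch} we have $\rist(1)=G^{\alb}$, so the map $g\mapsto g|_x$ from $G_x$ onto $G$ is surjective: the element $(g,1,\dots,1)$ placed at coordinate $x$ lies in $G_x$ and has section $g$ there. It therefore remains to show that $\limg$ has no local cut points.

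For this I will identify $\limg$ geometrically using Theorem~\ref{th:uniquenessoflimg}. First, build the orbispace model. Let $\widetilde C$ be the plane tiled by unit squares. Let $H$ be the group generated by the reflections $a,b,c,d$ in the four sides of a square, modified so that the dihedral relations $(ab)^6$ etc.\ hold; equivalently, $H$ is the orbifold fundamental group of the square $C$ with mirror sides and corner points of order $6$. Then $H$ acts properly and cocompactly on a simply connected surface $\mathcal{H}$, hyperbolic since $4\cdot\frac{1}{12}<\ldots$ gives negative orbifold Euler characteristic, tiled by copies of $C$ with $12$ squares around each vertex.

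Second, let $\mathcal{X}\subset\mathcal{H}$ be the union over tiles $C\cdot h$ of the Sierpi\'nski carpet inside each tile, i.e.\ the limit of the subdivision rule described in the Remark. The $8$-to-$1$ partial self-covering $p$ lifts to contracting maps $\mathcal{X}\otimes\bim\to\mathcal{X}$. The middle-third subdivision maps each subsquare affinely onto $C$, with the identifications dictated by the wreath recursion: for instance, the sections of $a$ being $a$ at positions $3,5,8$ records which subsquares touch the side fixed by $a$. Theorem~\ref{th:uniquenessoflimg} then gives $\mathcal{X}\cong\limg[H]$. Since the nucleus elements of $H$ stay nontrivial in $G$, Proposition~\ref{pr:limgH} shows that $\limg[H]\to\limg$ is a covering. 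Consequently $\limg$ is locally homeomorphic to $\mathcal{X}$.

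Third, show that $\mathcal{X}$ has no local cut points. Away from tile boundaries it is locally a carpet, and the carpet has none by Whyburn's Theorem~\ref{th:Whyburn}. At a point on a common side of two tiles, the two carpets are glued along a boundary segment, which is again locally a carpet; a Whyburn-type argument via condition (3) works, because the outer square boundaries are not boundaries of holes. At a vertex, $12$ carpet corners meet cyclically around a point of a surface, so a punctured neighbourhood stays connected. Hence $\limg$ has no local cut point, and Corollary~\ref{c-intro-FW} gives Property FW.

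I expect the main obstacle to be the precise verification that the wreath recursion realizes exactly this gluing, so that the uniqueness theorem applies. Matching sections to side-adjacencies is routine but fiddly. The corner cone angle must also be checked, and the order-$6$ relations should confirm it. If verifying Theorem~\ref{th:uniquenessoflimg} directly is too messy, I would instead argue combinatorially, through Theorem~\ref{th:endsandcutpoints}. The graphs $\Xi_n$ are the adjacency graphs of level-$n$ sub-squares in the tiled surface, and I would show directly that removing a bounded neighbourhood of a vertex leaves one large component, giving one-ended graphs of germs and then applying Corollary~\ref{c-FW}.
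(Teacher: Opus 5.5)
Your proposal is correct and follows essentially the paper's proof: Proposition~\ref{pr:sierpinskibranch} combined with Corollary~\ref{c-intro-FW}, with $\limg$ identified via Theorem~\ref{th:uniquenessoflimg} as Sierpi\'nski carpets glued along their sides (one tile at interior points, two along sides, a dihedral group of order $12$ of tiles at corners), so that every point has a carpet neighbourhood and hence no point is a local cut point. The only variation is that you build the model over the Coxeter group $H$ and descend to $\limg$ through the covering of Proposition~\ref{pr:limgH}, as in Corollary~\ref{c-IMG}; the paper instead glues carpets directly along the Cayley graph of $G$, using that $\langle h_1,h_2\rangle$ has order $12$ in $G$. (In your version, use the piecewise Euclidean metric on the square complex rather than the hyperbolic one, so that the subdivision maps are exact $1/3$-contractions.)
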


\begin{proof}
Let us show that $\limg$ has no local cut points, which will imply, by Proposition \ref{pr:sierpinskibranch} and Corollary \ref{c-intro-FW}, that $G$ has Property FW.

\begin{figure}
    \centering
    \includegraphics{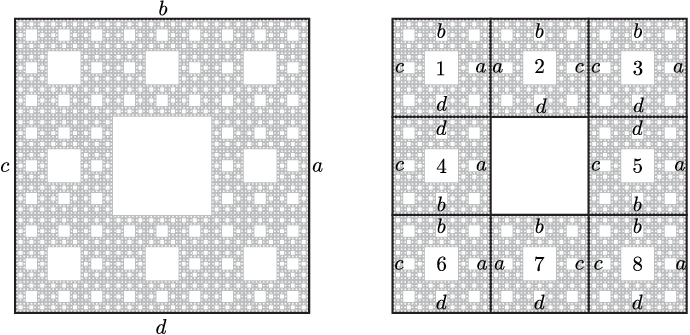}
    \caption{Sierpi\'nski carpet     \label{fig:carpet}}
\end{figure}

Let $\mathcal{C}$ be the classical Sierpi\'nski carpet. We label its sides in cyclic order by the letters $a, b, c, d$, as shown on the left-hand side of Figure~\ref{fig:carpet}. Let $\mathcal{C}_h$, for $h\in\{a, b, c, d\}$, be the (closed) side of $\mathcal{C}$ labeled by $h$. We denote by $C_{h_1h_2}$ or by $C_{h_2h_1}$, for $h_1h_2\in\{ab, bc, cd, da\}$ the intersection point of $\mathcal{C}_{h_1}$ with $\mathcal{C}_{h_2}$.

Let $\mathcal{X}$ be the space obtained by taking the quotient of the space $\mathcal{C}\times G$ by the equivalence relation generated by the identifications $(\xi, g)\sim (\xi, hg)$ for all $h\in\{a, b, c, d\}$, $\xi\in\mathcal{C}_h$, and $g\in G$. In other words, we replace the vertices of the Cayley graph of $G$ by Sierpi\'nski carpets, and paste their boundaries according to the adjacency of the vertices.

Let us describe the equivalence relation, i.e., the transitive closure of the described identifications. We have a triple $(\xi, g)\sim (\xi, h_1g)\sim (\xi, h_2h_1g)$ if and only if $\xi=C_{h_1h_2}$. It follows that the equivalence classes are 
$\{(\xi, hg)\colon h\in L_\xi\}$, where $L_\xi=\langle h_1, h_2\rangle$ if $\xi=C_{h_1h_2}$, $L_\xi=\langle h\rangle$ if $\xi$ is an interior point of the segment $\mathcal{C}_h$, and $L_\xi=\{1\}$ in all the other cases.

The group $G$ acts on $\mathcal{X}$ by the action induced by the right multiplication on $\mathcal{C}\times G$.
We denote by $\mathcal{C}\cdot g$ the image of $(\mathcal{C}, g)$ in $\mathcal{X}$, which is consistent with the action of $G$ on $\mathcal{X}$.

\begin{figure}
    \centering
    \includegraphics{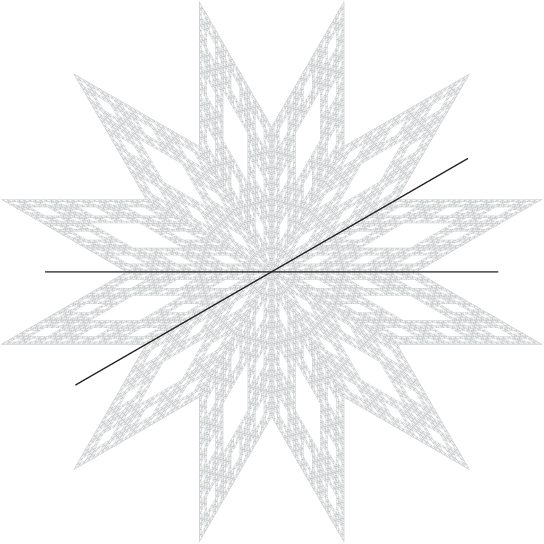}
    \caption{Neighborhood of a point with $D_6$ (dihedral of order 12) isotropy group.     \label{fig:12carpet}}
\end{figure}

The description of the equivalence relation implies that a neighborhood of a point $\xi\in \mathcal{C}\cdot g\subset \mathcal{X}$ is $\bigcup_{h\in L_\xi}\mathcal{C}\cdot hg$. See Figure~\ref{fig:12carpet}, where a neighborhood of a point $\xi=C_{h_1h_2}\cdot 1$ with $L_\xi\cong D_6$ (dihedral of order 12) is shown. Reflections with respect to the highlighted lines generate $L_\xi$ (which is the stabilizer of $C_{h_1h_2}\cdot 1$). The stabilizer of a point $\xi=C_{h_1h_2}\cdot g$ is then $g^{-1}L_\xi g$.

It follows that every point of $\mathcal{X}$ has a neighborhood homeomorphic to the Sierpi\'nski carpet, and hence $\mathcal{X}$ has no local cut points.

Consider now the space $\mathcal{X}\otimes\bim$. It follows from the definitions that it is the quotient of the space $\mathcal{C}\times\alb\times G$ by the equivalence relation generated by the identifications
\[(\xi, x, g)\sim (\xi, h(x), h|_xg)\]
for $h\in\{a, b, c, d\}$, $\xi\in\mathcal{C}_h$, $x\in\alb$, and $g\in G$.
Let us first apply the identifications for which $h|_x=1$. It follows from the wreath recursions (see the diagram in Figure \ref{fig:carpetmoore}) that for every fixed $g\in G$ we will paste together eight sets $(\mathcal{C}, x, g)$, for $x\in\alb$ as shown on the right-hand side of Figure~\ref{fig:carpet} (compare it with Figure~\ref{fig:carpetmoore}). The digits $x\in\{1, 2, \ldots, 8\}$ label the corresponding sets $(\mathcal{C}, x, g)$. Hence, the connected components we get after these identifications will be sets similar to $\mathcal{C}$ with ratio 3. The remaining identifications, producing $\mathcal{X}\otimes\bim$, will agree with the pasting rules used to define $\mathcal{X}$.

Consequently, we get a $G$-equivariant similarity $\mathcal{X}\otimes\bim\arr\mathcal{X}$ contracting all distances exactly by coefficient $1/3$. Theorem~\ref{th:uniquenessoflimg} implies that $\mathcal{X}$ is equivariantly homeomorphic to the limit $G$-space $\limg$. It was proved in~\cite[\S6.3]{MNZ} that the group $G$ is amenable.
\end{proof}

	\bibliographystyle{alpha}
	\bibliography{biblio}
	
\bigskip

{\small

\noindent\textit{Nicol\'as Matte Bon\\
Université Lyon 1, Centrale Lyon, INSA Lyon, Université Jean Monnet, CNRS, ICJ UMR5208, 69622 Villeurbanne, France.\\}
\href{mailto:mattebon@math.univ-lyon1.fr}{mattebon@math.univ-lyon1.fr}

\smallskip

\noindent\textit{Volodymyr Nekrashevych\\
	Department of Mathematics\\
 Texas A\&M University, College Station, TX 77843-3368, United States of America\\}
\href{mailto:nekrash@math.tamu.edu}{nekrash@math.tamu.edu}

\smallskip

\noindent\textit{Tianyi Zheng\\
	Department of Mathematics\\
 University of California, San Diego (UCSD),
9500 Gilman Drive \# 0112,
La Jolla, CA  92093-0112, United States of America\\}
\href{mailto:tzheng2@math.ucsd.edu}{tzheng2@math.ucsd.edu}

}

\end{document}